\newtheorem{theorem}{Theorem}[section]
\newtheorem{corollary}[theorem]{Corollary}
\newtheorem{definition}[theorem]{Definition}
\newtheorem{proposition}[theorem]{Proposition}
\theoremstyle{remark}
\newtheorem{remark}[theorem]{Remark}
\let\Section=\section
\def\section{\setcounter{equation}{0}\Section}
\newcommand{\ep}{\varepsilon}
\newcommand{\si}{\sigma}
\newcommand{\cf}{\mathcal F}
\newcommand{\HH}{\mathfrak H}
\newcommand{\lc}{\left[}
\newcommand{\rc}{\right]}
\def\RR{\mathbb{R}}
\def\NN{\mathbb{N}}
\def\EE{\mathbb{E}}
\def\PP{\mathbb{P}}
\def\cee{\mathcal{E}}
\def\caa{\mathcal{A}}
\def\css{\mathcal{S}}
\def\cff{\mathcal{F}}
\def\lt{\left}
\def\rt{\right}
\begin{document}

\author{Jingyu Huang, Khoa L\^e \and David Nualart}
\address{Jingyu Huang: Department of Mathematics\\
The University of Utah\\
Salt Lake City, UT 84112}
\email{jhuang@math.utah.edu}
\address{Khoa L\^e: Departmentof Mathematics\\
University of Calgary\\
Calgary, Alberta, Canada\\
T2N 1N4}
\email{khoa.le@ucalgary.ca}
\address{David Nualart: {Department of Mathematics }\\
The University of Kansas \\
Lawrence, Kansas, 66045}
\email{nualart@ku.edu}
\title{Large time asymptotics for the parabolic Anderson model driven by space and time correlated noise}

\begin{abstract}
	 We consider the linear stochastic heat equation on $\RR^\ell$,  driven by a Gaussian noise which is colored in time and space.   The spatial covariance  satisfies general assumptions and includes examples such as the Riesz kernel in any dimension and the covariance of the fractional Brownian motion with  Hurst parameter $H\in (\frac 14, \frac 12]$ in dimension one. First we establish the existence of a unique mild solution and we derive a Feynman-Kac formula for its moments using a family of independent Brownian bridges and assuming a general integrability condition on the initial data.  In the second part of the paper we  compute Lyapunov exponents and lower and upper exponential growth indices in terms of a variational quantity. 
	 \end{abstract}
\subjclass[2010]{60G15; 60H07; 60H15; 60F10; 65C30}
\keywords{Stochastic heat equation, Brownian bridge,
Feynman-Kac formula, exponential growth index}
\maketitle

\section{Introduction}
{The purpose of this paper is to study the stochastic heat equation  }
	\begin{equation}\label{SHE}
		 \frac {\partial u}{\partial t} =\frac12\Delta u+u \diamond\frac{\partial^{\ell+1}W}{\partial t\partial x_1\dots\partial x_\ell} \, ,
	\end{equation}
	where $t\ge 0$, $x\in \RR^\ell$ $(\ell\ge1)$ and  $W$ is  a centered Gaussian field, which is correlated in both temporal and spatial variables. We assume that the noise $W$ is described by a centered  Gaussian family  $W=\{ W(\phi), \phi \in 
	\css(\RR_+\times\RR^\ell)\}$, with covariance
	\begin{equation}\label{eqn:fouriercov}
		\EE [W(\phi)W(\psi)]=\frac1{(2 \pi)^\ell} \int_0^\infty\int_0^\infty\int_{\RR^\ell} \cff\phi(s,\xi)\overline{\cff\psi(r,\xi)}\gamma_0(s-r)\mu (d \xi )dsdr,
	\end{equation}
	where $\gamma_0$ is  a nonnegative {and nonnegative definite} locally integrable function, $\mu$ is a {tempered measure} and $\cff$ denotes the Fourier transform in the spatial variables.  Throughout the paper, we denote by $|\cdot|$ the Euclidean norm in $\RR^\ell$ and by $x\cdot y$ the usual inner product between two vectors $x,y$ in $\RR^\ell$.  We are going to consider two types of spatial covariances:
	\medskip
	\noindent
	\begin{enumerate}[leftmargin=0cm,itemindent=1.4cm,label=(H.1)]
\item\label{H1} $\ell =1$, the spectral measure $\mu$ is absolutely continuous with respect to the Lebesgue measure on $\RR$ with density $f$, that is $\mu(d \xi)=f(\xi)d \xi$, and $f$ satisfies:  
\begin{itemize}
\item [(a)] For all $\xi,\eta$ in $\RR$ and for some constant $\kappa_0>0$,
\begin{equation}\label{eq:H11}
				f(\xi+\eta)\le  \kappa_0  (f(\xi)+f(\eta)).
\end{equation}
	\item	 [(b)] 
	\begin{equation}\label{eq:H12}
			\int_\RR\frac{f^2(\xi)}{1+|\xi|^2}d \xi<\infty\,.
		\end{equation}    
\end{itemize}		
\end{enumerate}	

	{To state the second type of covariance, we recall that the space of   Schwartz functions is
denoted by $\mathcal{S}(\RR^\ell)$.   The Fourier
transform of a function $u \in \mathcal{S}(\RR^\ell)$ is defined with the normalization
\[ \mathcal{F}u ( \xi)  = \int_{\mathbb{R}^\ell} e^{- i
   \xi \cdot x } u ( x) d x, \]
so that the inverse Fourier transform is given by $\mathcal{F}^{- 1} u ( \xi)
= ( 2 \pi)^{- \ell} \mathcal{F}u ( - \xi)$. }

	\noindent
	\begin{enumerate}[leftmargin=0cm,itemindent=1.4cm,label=(H.2)]
	\item\label{H2} The  inverse Fourier transform of $\mu$ is a nonnegative locally integrable function {(or generalized function)} denoted by $\gamma$
	\begin{equation}\label{eqn:gamspec}
		\gamma(x)=\frac1{(2 \pi)^\ell}\int_{\RR^\ell} e^{i \xi  \cdot x}\mu(d \xi)\,,
	\end{equation}
	 and $\mu$ satisfies Dalang's condition
	\begin{equation} \label{k5}
	\int_{ \RR^\ell}\frac {\mu(d\xi) }{1+ |\xi|^2} <\infty.
	\end{equation}	
\end{enumerate}
\medskip
For the case \ref{H2},  $\gamma$ is nonnegative definite  and \eqref{eqn:fouriercov} can be written as
	\begin{equation}\label{eqn:cov}
		\EE [W(\phi)W(\psi)]=\int_0^\infty\int_0^\infty\int_{\RR^{2\ell}} \phi(s,x)\psi(r,y)\gamma_0(s-r)\gamma(x-y)dxdydsdr\,.
	\end{equation}

	Examples of covariance functions satisfying {condition} \ref{H2} are the Riesz kernel $\gamma(x)=|x|^{-\eta}$, with $0<\eta <2\wedge \ell$, the space-time white noise in dimension one, where $\gamma =\delta_0$, and the multidimensional fractional Brownian motion, where $\gamma(x)= \prod_{i=1}^\ell H_i (2H_i-1) |x_i|^{2H_i-2}$, {assuming that} $\sum_{i=1}^\ell H_i >\ell-1$ and $H_i \in (\frac 12,1)$ for $i=1,\dots, \ell$.
 
 \medskip
In the case \ref{H1}, the inverse Fourier transform of $\mu$ is at best a distribution and the expression \eqref{eqn:gamspec} is only formal. The right-hand side of \eqref{eqn:cov}, however, makes sense by pairing between Schwartz functions and distributions. For our convenience, we will address $\gamma$ as  a \textit{generalized covariance function} if its Fourier transform {is} a (nonnegative) tempered measure. It also worths noting that by Jensen's inequality, \eqref{eq:H12} implies Dalang's condition \eqref{k5}.
 The basic example of a noise satisfying \ref{H1} is the {\it rough fractional noise}, where the spectral density is given by
  $f(\xi)=c_H |\xi|^{1-2H}$, with    $H \in (\frac 14, \frac 12]$ and $c_H= \Gamma(2H+1) \sin(\pi H)$.  In this case, the noise $W$ has the covariance of a fractional Brownian motion with Hurst parameter   $H \in (\frac 14, \frac 12]$ in the spatial variable.
  Condition (a) holds with $\kappa_0=1$ and condition (b) holds because of the restriction $H>\frac14$.

  \medskip
These types of spatial covariances were introduced in our paper \cite{HLN}, where the noise is white in time. In \cite{HLN} we proved the existence of a unique mild solution formulated using  It\^o-type  stochastic integrals, we derived Feynman-Kac formulas for the moments of the solution using a family of independent Brownian  bridges and we computed Lyapunov exponents and lower and upper exponential growth indices. The purpose of this paper is to carry out this program when the noise is not white in time. 
 While the general methodology of the current article is similar to \cite{HLN}, the case of colored temporal noises has some distinct features and needs a  different  treatment. In particular the existence and estimation of Lyapunov exponents offer new difficulties that  require techniques different from the white-time  case.   

 \medskip
After  a section on preliminaries, Section \ref{sec:ex-uniq-chaos} is devoted to show   (see Theorem \ref{thmSk1}) the existence of a unique mild solution to equation (\ref{SHE}),  when the stochastic integral is understood in the Skorohod sense, and the initial condition satisfies the general integrability condition (\ref{eq14}). We want to mention that the existence and uniqueness of a solution for the \ref{H2} type covariance in the case of a colored noise in time has been also obtained in the recent paper \cite{BC} by Balan and Chen.  Then,  in Section \ref{sec: FK mom} we establish  Feynman-Kac formulas for the moments of the solution in terms of independent Brownian motions or Brownian bridges (see Proposition \ref{prop1} and Corollary \ref{cor1}).  Section \ref{sec:Lyapunov-BB} is devoted to obtain Lyapunov exponents for exponential functionals of Brownian bridges,  assuming that  $\gamma_0(t)= |t|^{-\alpha_0}$ and  $\gamma(cx)=c^{-\alpha} \gamma(x)$ for all $c>0$, where $\alpha_0\in (0,1)$ and $\alpha \in(0,2)$.
The main result  of this section is Theorem  \ref{thm:expBB}, whose proof is inspired by Theorem 1.1 of  \cite{Chen}. 
While Chen's article \cite{Chen} deals with exponential functionals of Brownian motions, we deal with exponential functionals of Brownian bridges. 
Another difference is that we allow noises which satisfy condition \ref{H1}. In this case, the spatial covariance is generally a distribution and even if it is a function, it is not necessary non-negative and may switch signs. The former issue is solved by an appropriate approximation procedure. For the later issue, the compact folding argument in \cite{Chen} is no longer applicable here. Instead, we use a moment comparison between Brownian motions and Ornstein-Uhlenbeck processes, which is observed by Donsker and Varadhan in \cite{DV}.
We refer to 
\cite{CHSS} for related results on temporal asymptotics for the fractional parabolic Anderson model.

 \medskip
Finally, in Section \ref{sec: Exp Grow indices} we study the speed of propagation of intermittent peaks. The propagation of the farthest high peaks was  first considered by Conus and Khoshnevisan in \cite{CoKh} for a {one-dimensional} heat equation driven by a space-time white noise with compactly supported initial condition, where it is shown that there are intermittency fronts that move linearly with time as  $\lambda t$. More precisely, they defined the lower and upper exponential growth indices as follows:
\[
		\lambda_*(n)=\sup\left\{\lambda>0:\liminf_{t\to\infty}\frac 1t \sup_{|x|\ge \lambda t}\log\EE |u(t,x)|^n>0\right \}\,
\]
and
\[
		\lambda^*(n)=\inf\left\{\lambda>0:\limsup_{t\to\infty}\frac 1t \sup_{|x|\ge \lambda t}\log\EE  |u(t,x)|^n<0\right \}\,.
\]
Generalizing previous results by Chen and Dalang \cite{ChDa}, we proved in \cite{HLN} that, assuming that $u_0$ is nonnegative,
\begin{equation}  \label{bcn1}
 \sqrt{\frac{2\cee_n(\gamma)}{n}} \le 	 \lambda_*(n)\le  \lambda^*(n)  \le  \inf_{\beta:\int_{\RR^\ell} e^{\beta|y|}u_0(y)dy<\infty}\left( \frac {\beta}{2}+\frac{\cee_n(\gamma)}{n \beta}\right),	 
\end{equation}
where  $\cee_n(\gamma)$ is the $n$th Lyapunov exponent. In particular, If $u_0$ is nontrivial and supported on a compact set, then
		\begin{equation}\label{id:speed1}
			\lambda^*(n)=\lambda_*(n)=\sqrt{\frac{2\cee_n(\gamma)}{n}}\,.
		\end{equation}
In the reference \cite{HHN}, using the Feynman-Kac formula for the moments of the solution established in  \cite{HHNT},  the authors have  obtained lower and upper bounds for the exponential growth indices when the noise has a general covariance of the form $\EE[\dot{W}(t,x) \dot{W}(s,y)]=\gamma_0(t-s) \gamma(x-y)$, where $\gamma_0$ is locally integrable and the spatial covariance $\gamma$ satisfies \ref{H2}.
Here $\dot{W}(t,x)$ stands for $\frac {\partial ^{\ell+1}W}{\partial t \partial x_1 \cdots \partial x_\ell}$.  In this general situation, to obtain non-trivial limits the factor $t^{-1}$ and the set $\{|x| \ge \lambda t\}$ appearing in the definition of the exponential growth indices, need to be  changed. In the particular case $\gamma_0(t)=|t|^{-\alpha_0}$ and $\gamma(x)= |x|^{-\alpha}$ we need to replace  $t^{-1}$ by $t^{-a}$ and the set $\{|x| \ge \lambda t\}$ by $\{|x| \ge \lambda t^{\frac {a+1}2}\}$, where
$
a=\frac {4-\alpha -\alpha_0}{2-\alpha}.
$
In the present paper, we complete this analysis with the methodology developed in  \cite{HLN}, based on large deviations.

As a consequence of the  large deviation results obtained in Section \ref{sec:Lyapunov-BB}, in Section \ref{sec: Exp Grow indices}, under suitable scaling hypotheses on the covariance of the noise, we deduce  the following results on the exponential growth indices, that should be compared with (\ref{bcn1}) and (\ref{id:speed1}):  
\begin{itemize}
\item[(i)]  If the initial condition $u_0$ is a nonnegative function such that $ \int_{\RR^\ell} e^{\beta|y|^b}u_0(y)dy<\infty$, where $b= \frac{4- \alpha-2 \alpha_0}{3- \alpha- \alpha_0}$, then
\[
\lambda^*(n) \le    g_\beta^{-1} \left(    \left(  \frac {n-1} 2 \right)^{ \frac 2 {2-\alpha}}   \mathcal{E}(\alpha_0,\gamma)\right).
\]
where $g_\beta$ is an increasing function on $(0,\infty)$ defined by equation (\ref{gbeta}), and  $\mathcal{E}(\alpha_0,\gamma)$ is a variational quantity defined in  (\ref{egamma}). 
\item[(ii)]   Suppose $u_0$ is bounded below in a ball of radius $M$, and for some technical reasons  assume that the spatial covariance satisfies \ref{H2}.  Then,
	  \[
   \lambda _*(n) \ge    a^{\frac a2} (a+1) ^{-\frac {a+1}2}
  \sqrt{ 2  \left(  \frac {n-1} 2 \right)^{ \frac 2 {2-\alpha}}   \mathcal{E}(\alpha_0,\gamma) }\,,
	 \]\end{itemize}
where $a=\frac{4-\alpha-2\alpha_0}{2-\alpha}$. Moreover, as $\beta$ tends to infinity, the function $g_\beta(x)$ converges to $\sqrt{2x}$ and in the compact support case, the two bounds above differ only on the constant $a^{\frac a2} (a+1) ^{-\frac {a+1}2}$.

\section{Preliminaries} 
 \label{sec:preliminaries}

 Let $\mathcal{H}$  be the completion of
$\mathcal{S}(\RR_+\times\RR^\ell)$
endowed with the inner product
\begin{equation}\label{innprod1}
\langle \varphi , \psi \rangle_{\mathcal{H}}
=\frac  1 {(2\pi)^{\ell}}\int_{\RR_{+}^{2}\times\RR^{\ell}}  \cf \varphi(s,\xi) \overline{ \cf \psi(t,\xi)}\gamma_0(s-t) \mu(d\xi) \, dsdt.
\end{equation}
 The mapping $\varphi \rightarrow W(\varphi)$ defined on $\mathcal{S}(\RR_+\times\RR^\ell)$  can be extended to a linear isometry between
$\mathcal{H}$  and the Gaussian space
spanned by $W$. We will denote this isometry by
\begin{equation*}
W(\phi)=\int_0^{\infty}\int_{\RR^\ell}\phi(t,x)W(dt,dx)
\end{equation*}
for $\phi \in \mathcal{H}$.
If $\mu$ satisfies \ref{H2}, the righ-hand side of \eqref{innprod1} can be written in Cartesian coordinates as $\int_{\RR_+^2\times\RR^{2\ell}}\varphi(s,x)\psi(t,y)\gamma_0(s-t)\gamma(x-y)dxdydsdt$. Hence, a standard approximation (still assuming \ref{H2}) shows that $\mathcal{H}$  contains
the class of measurable functions $\phi$ on $\RR_+\times
\RR^\ell$  such that
\begin{equation}\label{abs1}
\int_{\RR^2_+  \times\RR^{2\ell}} |\phi(s,x)\phi(t,y)| \, \gamma_0(s-t)\gamma(x-y) \, dxdydsdt <
\infty\,.
\end{equation}

\subsection{Elements of Malliavin calculus}

We denote by $D$ the Malliavin derivative. That is, if $F$ is a smooth and cylindrical
random variable of the form
\begin{equation*}
F=f(W(\phi_1),\dots,W(\phi_n))\,,
\end{equation*}
with $\phi_i \in \mathcal{H}$, $f \in C^{\infty}_p (\RR^n)$ (that is, $f$ and all
its partial derivatives have polynomial growth), then $DF$ is the
$\mathcal{H}$-valued random variable defined by
\begin{equation*}
DF=\sum_{j=1}^n\frac{\partial f}{\partial
x_j}(W(\phi_1),\dots,W(\phi_n))\phi_j\,.
\end{equation*}
The operator $D$ is closable from $L^2(\Omega)$ into $L^2(\Omega;
\mathcal{H})$, and we define the Sobolev space $\mathbb{D}^{1,2}$ as
the closure of the space of smooth and cylindrical random variables
under the norm
\[
\|DF\|_{1,2}=\sqrt{\EE[F^2]+\EE[\|DF\|^2_{\mathcal{H}}]}\,.
\]
We denote by $\delta$ the adjoint of the derivative operator given
by the duality formula
\begin{equation}\label{dual}
\EE \lc \delta (u)F \rc =\EE \lc \langle DF,u
\rangle_{\mathcal{H}}\rc , 
\end{equation}
for any $F \in \mathbb{D}^{1,2}$ and any element $u \in L^2(\Omega;
\mathcal{H})$ in the domain of $\delta$. The operator $\delta$ is
also called the {\it Skorohod integral} because in the case of the
Brownian motion, it coincides with an extension of the It\^o
integral introduced by Skorohod. 

If $F\in \mathbb{D}^{1,2}$ and $h$ is an element of $%
\mathcal{H}$, then $Fh$ is Skorohod
integrable and, by definition, the
{\it Wick product } equals the Skorohod integral of $Fh$, that is,
\begin{equation}
\delta (Fh)=F\diamond W(h).  \label{Wick}
\end{equation}%
We refer to the book \cite{Nualart2} of Nualart
for a detailed account of the Malliavin calculus with respect to a
Gaussian process.

When handling the stochastic heat equation in the Skorohod sense we will make use of chaos expansions, which we briefly describe in the following.
For any integer $n\ge 0$ we denote by $\mathbf{H}_n$ the $n$th Wiener chaos of $W$. We observe that $\mathbf{H}_0$ is $\RR$ and for $n\ge 1$, $\mathbf {H}_n$ is the closed linear subspace of $L^2(\Omega)$ generated by the family of random variables $\{ H_n(W(h)), h \in \mathcal{H}, \|h\|_{\mathcal{H}}=1 \}$. Here $H_n$ is the $n$th Hermite polynomial.
For any $n\ge 1$, we denote by $\mathcal{H}^{\otimes n}$ (resp. $\mathcal{H}^{\odot n}$) the $n$th tensor product (resp. the $n$th  symmetric tensor product) of $\mathcal{H}$. Then, the mapping $I_n(h^{\otimes n})= H_n(W(h))$ can be extended to a linear isometry between    $\mathcal{H}^{\odot n}$, equipped with the modified norm $\sqrt{n!}\| \cdot\|_{\mathcal{H}^{\otimes n}}$, and $\mathbf{H}_n$.

Let us consider a random variable $F\in L^2(\Omega)$ which is measurable with respect to the $\sigma$-field  $\mathcal{F}^W$ generated by $W$. This random variable can be expressed (called the Wiener-chaos expansion of $F$) as
\begin{equation}\label{eq:chaos-dcp}
F= \EE \lc F\rc + \sum_{n=1} ^\infty I_n(f_n),
\end{equation}
where the series converges in $L^2(\Omega)$, and the elements $f_n \in \mathcal{H}^{\odot n}$, $n\ge 1$, are determined by $F$.

The Skorohod integral (or the divergence) of a random field $u$ can be
computed using the Wiener chaos expansion. More precisely,
suppose that $u=\{u(t,x),          (t,x) \in \RR_+ \times\RR^\ell\}$ is a random
field such that for each $(t,x)$, $u(t,x)$ is an
$\mathcal{F}^W$-measurable and square integrable random  variable.
Then, for each $(t,x)$, $u(t,x)$ has the Wiener chaos expansion of the form
\begin{equation}  \label{exp1}
u(t,x)= \EE\lc u(t,x)\rc + \sum_{n=1}^\infty I_n (f_n(\cdot,t,x)).
\end{equation}
Suppose additionally that the trajectories of $u$ belong to $\mathcal{H}$ and  $\EE[\| u\|^2_{\mathcal{H}}] <\infty$.
Then, we can interpret $u$ as a square  integrable
random function with values in $\mathcal{H}$ and the kernels $f_n$
in the expansion (\ref{exp1}) are functions in $\mathcal{H}
^{\otimes (n+1)}$ which are symmetric in the first $n$ time-space variables. In
this situation, $u$ belongs to the domain of the divergence (that
is, $u$ is Skorohod integrable with respect to $W$) if and only if
the following series converges in $L^2(\Omega)$
\begin{equation}\label{eq:delta-u-chaos}
\delta(u)= \int_0 ^\infty \int_{\RR^\ell}  u(t,x)   \delta W(t,x)= W(\EE[u]) + \sum_{n=1}^\infty I_{n+1} (\widetilde{f}_n),
\end{equation}
where $\widetilde{f}_n$ denotes the symmetrization of $f_n$ in all its $n+1$ time-space variables.

\subsection{Brownian bridges} 
\label{sub:brownian_bridges}
	Let $\{B(s), s\ge0\}$ be an $\ell$-dimensional Brownian motion starting at 0. For every fixed time $t>0$ and $x,y\in\RR^\ell$, the process
	\begin{equation*}
		\left \{\widetilde B(s)=x+B(s)-\frac st(B(t)+x-y), 0\le s\le t\right\}
	\end{equation*}
	is an $\ell$-dimensional Brownian bridge from $x$ to $y$, i.e. $\widetilde B(0)=x$ and $\widetilde B(t)=y$. Away from the terminal time $t$, the law of Brownian bridge admits a density with respect to Brownian motion. Indeed, it is shown in \cite[Lemma 3.1] {Nakao} that for every bounded measurable function $F$,
	\begin{multline}\label{id:density}
		\EE \lt[F(\{\widetilde B(s),0\le s\le \lambda t\})\rt]
		\\=(1- \lambda)^{-\frac\ell2} \EE \lt[\exp\left\{-\frac{|y-x-B(\lambda t)|^2}{2t(1- \lambda)}+\frac{|y-x|^2}{2t} \right\}F(\{ B(s),0\le s\le \lambda t\})\rt].
	\end{multline}
	Throughout the paper, we denote by $\{B_{0,t}(s),0\le s\le t\}$ an $\ell$-dimensional Brownian bridge which starts and ends at the origin. A Brownian bridge from $x$ to $y$ can be expressed as 
	\[\lt\{B_{0,t}(s)+\frac st y+(1-\frac st)x,0\le s\le t\rt\}\,.\]

\section{Existence and uniqueness of a solution via chaos expansions}
\label{sec:ex-uniq-chaos}
We  denote by $p_{t}(x)$ the
$\ell$-dimensional heat kernel $p_{t}(x)=(2\pi
t)^{-\ell/2}e^{-|x|^2/2t} $, for any $t > 0$, $x \in \RR^\ell$.
For each $t\ge 0$ let $\mathcal{F}_t$ be the $\sigma$-field generated by the random variables $W(\varphi)$, where
$\varphi$ has support in $[0,t ]\times \RR^\ell$. We say that a random field $u=\{u({t,x}), (t,x) \in \RR_+\times \RR^\ell\}$ is adapted if for each $(t,x)$ the random variable $u_{t,x}$ is $\mathcal{F}_t$-measurable.

We assume that the initial condition $u_0$ is a measurable function satisfying the condition
\begin{equation} \label{con:u0}
(p_t *|u_0|)(x) <\infty \mbox{ for all } t>0 \mbox{ and } x\in\RR^\ell\,,
\end{equation}
where $p_t *|u_0|$ denotes the convolution of the  heat kernel $p_t$ and the function $|u_0|$.   
This condition is equivalent to
\begin{equation}  \label{eq14}
\int_{\RR^\ell} e^{-\kappa|x|^2}  |u_0(x) |dx <\infty,
\end{equation}
for all $\kappa>0$.

We define the solution of equation  (\ref{SHE}) as follows.

\begin{definition}\label{def1}
An adapted   random field $u=\{u({t,x}), t \geq 0, x \in
\mathbb{R}^\ell\}$ such that $\EE  u^2({t,x}) < \infty$ for all $(t,x)$ is
a mild solution to equation (\ref{SHE}) with initial condition $u_0$ satisfying (\ref{eq14}), if for any $(t,x) \in [0,
\infty)\times \mathbb{R}^\ell$, the process $\{p_{t-s}(x-y)u({s,y}){\bf
1}_{[0,t)}(s) , s \ge 0, y \in \mathbb{R}^\ell\}$ is Skorohod
integrable, and the following equation holds
\begin{equation}\label{eq:sko-mild}
u(t,x)=p_t *u_0(x)+\int_0^t\int_{\mathbb{R}^\ell}p_{t-s}(x-y)u({s,y}) \, \delta
W_{s,y}.
\end{equation}
\end{definition}

Suppose now that $u=\{u({t,x}), t\geq 0, x \in \RR^\ell\}$ is a mild  solution to equation (\ref{eq:sko-mild}). Then according to (\ref{eq:chaos-dcp}), for any fixed $(t,x)$ the random variable $u({t,x})$ admits the following Wiener chaos expansion
\begin{equation}
u({t,x})=\sum_{n=0}^{\infty}I_n(f_n(\cdot,t,x))\,,
\end{equation}
where for each $(t,x)$, $f_n(\cdot,t,x)$ is a symmetric element in
$\mathcal{H}^{\otimes n}$.
Thanks to  (\ref{eq:delta-u-chaos}) and using an iteration procedure, one can then find an
explicit formula for the kernels $f_n$ for $n \geq 1$
\begin{eqnarray*}
f_n(s_1,y_1,\dots,s_n,y_n,t,x)=\frac{1}{n!}p_{t-s_{\si(n)}}(x-y_{\si(n)})\cdots p_{s_{\si(2)}-s_{\si(1)}}(y_{\si(2)}-y_{\si(1)})
p_{s_{\si(1)}}*u_0(y_{\si(1)})\,,
\end{eqnarray*}
where $\si$ denotes the permutation of $\{1,2,\dots,n\}$ such that $0<s_{\si(1)}<\cdots<s_{\si(n)}<t$
(see, for instance,  equation (4.4) in \cite{HN}, where this formula is established in the case of a noise which is white in space).
Then, to show the existence and uniqueness of the solution it suffices to show that for all $(t,x)$ we have
\begin{equation}\label{chaos}
\sum_{n=0}^{\infty}n!\|f_n(\cdot,t,x)\|^2_{\mathcal{H}^{\otimes n}}< \infty\,.
\end{equation}


\begin{theorem}\label{thmSk1}
Suppose that the spatial covariance satisfies \ref{H1} or \ref{H2}. 
Then relation (\ref{chaos}) holds
for each {$(t,x) \in (0,\infty)\times \RR^{\ell}$}. Consequently, equation (\ref{SHE}) admits a
unique mild solution in the sense of Definition \ref{def1}.
\end{theorem}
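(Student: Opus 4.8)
The plan is to verify the summability condition \eqref{chaos} for each fixed $(t,x)\in(0,\infty)\times\RR^\ell$, from which the existence and uniqueness of the mild solution follow by the argument recalled just before the statement; concretely I will bound the $n$th term of \eqref{chaos} by the general term of a convergent series, treating \ref{H1} and \ref{H2} in parallel as far as possible. The first step is a reduction to a Brownian-bridge Gaussian integral. Introduce the time-ordered kernel
\[
g_n(s_1,y_1,\dots,s_n,y_n)=\mathbf{1}_{\{0<s_1<\cdots<s_n<t\}}\,p_{t-s_n}(x-y_n)\,p_{s_n-s_{n-1}}(y_n-y_{n-1})\cdots p_{s_2-s_1}(y_2-y_1)\,(p_{s_1}*u_0)(y_1),
\]
and note that $f_n(\cdot,t,x)$ is exactly the symmetrization of $g_n$ in its $n$ time--space pairs; since symmetrization is an orthogonal projection on $\mathcal H^{\otimes n}$ it is a contraction, whence $\|f_n(\cdot,t,x)\|_{\mathcal H^{\otimes n}}\le\|g_n\|_{\mathcal H^{\otimes n}}$ and it is enough to bound $n!\,\|g_n\|^2_{\mathcal H^{\otimes n}}$. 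Writing $(p_{s_1}*u_0)(y_1)=\int_{\RR^\ell}u_0(w)\,p_{s_1}(y_1-w)\,dw$ and inserting it, the product of heat kernels becomes $p_t(x-w)$ times the joint density, at the ordered times $s_1<\cdots<s_n$, of the Brownian bridge on $[0,t]$ from $w$ to $x$ (cf.\ Section \ref{sub:brownian_bridges}); taking the spatial Fourier transform $\mathcal F$ and bounding $|u_0|$ under the $w$-integral gives, for $0<s_1<\cdots<s_n<t$,
\[
\bigl|(\mathcal F g_n)(s_1,\cdot,\dots,s_n,\cdot)(\xi_1,\dots,\xi_n)\bigr|\le (p_t*|u_0|)(x)\,\exp\!\Bigl(-\tfrac12\,\mathcal Q_{\mathbf{s}}(\xi_1,\dots,\xi_n)\Bigr),
\]
where $\mathbf{s}=(s_1,\dots,s_n)$ and $\mathcal Q_{\mathbf{s}}$ is the positive definite quadratic form of the standard Brownian bridge on $[0,t]$ sampled at $s_1,\dots,s_n$; the prefactor $(p_t*|u_0|)(x)$ is finite by \eqref{con:u0}. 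Substituting this into the $n$-fold tensor version of \eqref{innprod1} yields the master bound
\[
n!\,\|g_n\|^2_{\mathcal H^{\otimes n}}\le \frac{n!\,\bigl((p_t*|u_0|)(x)\bigr)^2}{(2\pi)^{n\ell}}\int_{T_n(t)^2}\Bigl(\prod_{j=1}^n\gamma_0(s_j-r_j)\Bigr)\,\Phi_n(\mathbf{s},\mathbf{r})\,d\mathbf{s}\,d\mathbf{r},
\]
where $T_n(t)=\{0<u_1<\cdots<u_n<t\}$, $\mathbf{r}=(r_1,\dots,r_n)$, and $\Phi_n(\mathbf{s},\mathbf{r})=\int_{\RR^{n\ell}}\exp\bigl(-\tfrac12\mathcal Q_{\mathbf{s}}(\xi_1,\dots,\xi_n)-\tfrac12\mathcal Q_{\mathbf{r}}(\xi_1,\dots,\xi_n)\bigr)\,\mu(d\xi_1)\cdots\mu(d\xi_n)$.

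The second step is to estimate the frequency integral $\Phi_n$ using the explicit form of the bridge quadratic form, which reduces it to a product over consecutive time-gaps. Under \ref{H2} the measure $\mu$ is nonnegative; substituting partial sums of the variables $\xi_j$ and invoking Dalang's condition \eqref{k5} — equivalently the local integrability of $v\mapsto\int_{\RR^\ell}e^{-v|\xi|^2}\mu(d\xi)$ — one obtains $\Phi_n(\mathbf{s},\mathbf{r})\le C^n\prod_{j=1}^n\phi\bigl((s_{j+1}-s_j)+(r_{j+1}-r_j)\bigr)$, with $s_{n+1}=t$, where $\phi$ has an integrable singularity at the origin of order $\beta<1$. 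Under \ref{H1} one has $\ell=1$ and $\mu(d\xi)=f(\xi)\,d\xi$; after the same substitution the factors $f$ are distributed via the sub-additivity \eqref{eq:H11}, and each resulting one-dimensional Gaussian integral is estimated by the Cauchy--Schwarz inequality against $(1+|\xi|^2)^{-1}$ using \eqref{eq:H12}, producing a bound of the same shape with $\phi(v)\lesssim v^{-3/4}$ for $v$ small. This is the point at which I would follow and adapt the arguments of \cite{HLN}.

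The third step is the time integration. It remains to bound $\int_{T_n(t)^2}\bigl(\prod_j\gamma_0(s_j-r_j)\bigr)\prod_j\phi\bigl((s_{j+1}-s_j)+(r_{j+1}-r_j)\bigr)\,d\mathbf{s}\,d\mathbf{r}$; using the elementary inequality $(a+b)^{-\beta}\le a^{-\beta/2}b^{-\beta/2}$ to decouple the two copies of the simplex, together with the local integrability and non-negative definiteness of $\gamma_0$ and a Dirichlet-type estimate, this quantity is bounded by $C_t^n/\Gamma(\kappa n+1)^2$ for some $\kappa\in(\tfrac12,1)$ (when $\gamma_0$ and $\phi$ are power functions, as in all the main examples, this is the classical Dirichlet integral; the colored-in-time case under \ref{H2} is carried out in \cite{BC}). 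Combining the three steps, the $n$th term of \eqref{chaos} is at most $C_{t,x}^n\,n!/\Gamma(\kappa n+1)^2$, which is the general term of a convergent series since $\Gamma(\kappa n+1)^2$ grows like $(n!)^{2\kappa}$ with $2\kappa>1$; this proves \eqref{chaos}.

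The step I expect to be the main obstacle is the frequency estimate of Step 2 under hypothesis \ref{H1}: there the spatial covariance is at best a tempered distribution, and even when it is a genuine function it may change sign, so the nonnegativity-based folding estimates available in the Dalang case do not apply, and one must exploit the two structural conditions \eqref{eq:H11} and \eqref{eq:H12} jointly — while keeping the cumulative order of the temporal singularities below $1$ (equivalently $\kappa>\tfrac12$ in Step 3), which is precisely what makes the quantitative form of these hypotheses, rather than mere integrability of $f$, indispensable. Controlling the temporal correlation $\gamma_0$ in Step 3 under only a local-integrability assumption is a secondary technical issue, dealt with by now-standard estimates.
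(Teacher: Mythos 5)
Your overall architecture agrees with the paper's — identify $f_n$ with the symmetrization of a time-ordered kernel, bound the spatial Fourier transform of that kernel by $(p_t*|u_0|)(x)\,e^{-\frac12 \mathcal Q_{\mathbf s}(\xi)}$ with $\mathcal Q_{\mathbf s}$ the Brownian bridge quadratic form, and reduce to a summable simplex integral — so the first step is essentially identical. Where you diverge, and where the proposal develops a genuine gap, is in how the temporal covariance $\gamma_0$ is handled. The paper's proof decouples the two blocks of time variables $\mathbf s,\mathbf r$ immediately: applying Cauchy--Schwarz in $\xi$ to $\int \Phi(s,\xi)\overline{\Phi(r,\xi)}\mu(d\xi)$, then $ab\le\tfrac12(a^2+b^2)$, makes $\prod_j\gamma_0(s_j-r_j)$ integrate out against a \emph{free} $\mathbf r\in[0,t]^n$ using only local integrability of $\gamma_0$, and leaves a single-time-block integral $\int_{[0,t]^n_<}\int |\Phi(s,\xi)|^2\mu(d\xi)\,ds$ that is \emph{exactly} the white-in-time object estimated in Lemmas 9.1 and 9.4 of \cite{HLN}. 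This is the whole content of the theorem relative to \cite{HLN}: the colored-in-time case reduces, by one soft inequality, to the white-in-time case.

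You instead keep $\mathbf s$ and $\mathbf r$ coupled, assert a product bound $\Phi_n(\mathbf s,\mathbf r)\le C^n\prod_j\phi\bigl((s_{j+1}-s_j)+(r_{j+1}-r_j)\bigr)$, and then decouple by $(a+b)^{-\beta}\le a^{-\beta/2}b^{-\beta/2}$ before confronting the $\gamma_0$ factors on a double simplex. The product bound is the problem. Under \ref{H1} it is plausible (the partial-sum change of variables in $\xi$ turns the Brownian-motion quadratic form into $\sum_k (s_k-s_{k-1})|\eta_k|^2$, and the sub-additivity \eqref{eq:H11} lets you redistribute the factors $f(\eta_k-\eta_{k+1})$; one still has to control the bridge correction, which is precisely what Lemma 9.1 of \cite{HLN} does). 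Under \ref{H2}, however, the argument as you state it does not go through: the substitution $\eta_k=\sum_{j\ge k}\xi_j$ does \emph{not} factor $\mu(d\xi_1)\cdots\mu(d\xi_n)$, since $\mu(d\xi_j)=\mu(d(\eta_j-\eta_{j+1}))$ for a general tempered measure $\mu$ has no product structure, and there is no analogue of \eqref{eq:H11} to distribute it. The reason the estimate works under \ref{H2} in \cite{HLN} is different in nature: one rewrites $e^{-\mathcal Q_{\mathbf s}(\xi)}=\EE\,e^{i\sum_j\xi_j\cdot(B(s_j)-\tilde B(s_j))}$ with two independent bridges, so that $\int e^{-\mathcal Q_{\mathbf s}}\,\mu(d\xi)=(2\pi)^{n\ell}\,\EE\prod_j\gamma\bigl(B(s_j)-\tilde B(s_j)\bigr)$, and the nonnegativity of $\gamma$ plus a moment estimate for the intersection local time yields the bound — not a gap-by-gap factorization in frequency. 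As written, your Step~2 under \ref{H2} is a gap, and consequently your Step~3 (which presumes that factorization) is not established either.

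A secondary bookkeeping remark: after bounding $\|f_n\|\le\|g_n\|$ via symmetrization being a contraction, you carry an extra factor $n!$ that the paper never produces (the paper keeps $f_n$ with its built-in $1/n!$ and pays the $n!$ only when passing from $[0,t]^n$ to the simplex). Your final tally $C^n\,n!/\Gamma(\kappa n+1)^2$ is still summable for $\kappa>\tfrac12$, so this is not a gap, but it is an avoidable loss. The more important issue is the one above: you are effectively trying to reprove the $\gamma_0$-absorption and the \cite{HLN} simplex lemmas simultaneously, and the proof of the former under \ref{H2} is not correct as stated. Inserting the paper's early Cauchy--Schwarz decoupling removes the need for your Step~2 entirely and lets you cite the existing simplex estimates verbatim.
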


\begin{proof}
Notice that the kernel $f_n$ can be written as
\[
 f_n(s,y,t,x)= \int_{\RR^\ell} g_n(s,y,t,z) u_0(z) dz, 
\] 
where $s=(s_1, \dots, s_n)$, $y=(y_1,\dots, y_n)$ and
\begin{equation}\label{eq:def-gn}
 g_n(s,y,t,z)=\frac{1}{n!}p_{t-s_{\sigma(n)}}(x-y_{\sigma(n)})\cdots
p_{s_{\sigma(2)}-s_{\sigma(1)}}(y_{\sigma(2)}-y_{\sigma(1)})p_{s_{\sigma(1)}}(y_{\sigma(1)}-z)\,. 
\end{equation}
Then
\begin{eqnarray*}
n! \|f_n(\cdot,t,x)\|^2_{{\mathcal{H}}^{\otimes n}}
&=&\frac {n!}  {(2\pi)^{n\ell}} \int_{[0,t]^{2n}} \int_{(\RR^{\ell})^n}  \Phi(s,\xi) \overline{\Phi(r,\xi)}  \mu(d\xi)  \prod_{j=1}^n \gamma_0(s_j-r_j) dsdr \\
&\le& \frac {n!}  {(2\pi)^{n\ell}} \int_{[0,t]^{2n}}  \left( \int_{(\RR^{\ell})^n}  |\Phi(s,\xi)|^2 \mu(d\xi) \right)^{\frac 12} \\
&& \times 
\left( \int_{\RR^{n\ell}}  |\Phi(r,\xi)| ^2 \mu(d\xi) \right)^{\frac 12}  \prod_{j=1}^n \gamma_0(s_j-r_j) dsdr,
\end{eqnarray*}
where $\xi=(\xi^1, \dots, \xi^n)$, $\mu(d\xi)= \prod_{i=1}^n \mu(d\xi^i)$,
\[
 \Phi(s,\xi)=\int_{\RR^{\ell}} \mathcal{F}g_n(s,\cdot,t,z)(\xi) u_0(z)dz, 
\]
$ds=ds_1\cdots ds_n$ and $dr=dr_1\cdots dr _n$.
Using the inequality $ab \le \frac 12 (a^2+b^2)$ and the fact that $\gamma_0$ is locally integrable, we obtain
\[
n! \|f_n(\cdot,t,x)\|^2_{{\mathcal{H}}^{\otimes n}}
\le C^n n! \int_{[0,t]^{n}}   \int_{\RR^{n\ell}}  |\Phi(s,\xi)|^2 \mu(d\xi)  ds.
\]
By symmetry, this leads to
 \begin{equation} \label{qu1}
n! \|f_n(\cdot,t,x)\|^2_{{\mathcal{H}}^{\otimes n}}
\le C^n ( n!)^2 \int_{[0,t]_<^{n}}   \int_{\RR^{n\ell}}  |\Phi(s,\xi)|^2 \mu(d\xi)  ds,
\end{equation}
where for each $n\ge2$, we denote 
	\begin{equation}  \label{k8}
 		[0,t]^n_< := \{(t_1,\dots,t_n): 0 < t_1 < \cdots < t_n < t\}.
 	\end{equation}

 Fix $0<s_2<s_2< \cdots < s_n <t$. Notice that $(y,z) \mapsto n! g_n(s,y,t,z)$ is the joint density of the random vector
 $(B_{s_1}, B_{s_2}, \dots, B_{s_n}, B_t)$ at the point $(y_1-z, y_2-z, \dots, y_n-z,x-z)$ where $B=\{B_t, t\ge 0\}$ is an $\ell$-dimensional Brownian motion. Therefore,
 $n! g_n(s,\cdot,t,z) /p_t(x-z)$ is the conditional density of $(B_{s_1}, B_{s_2}, \dots, B_{s_n})$ given $B_t=x-z$, which coincides with the law of
 the random vector 
 \[
 Z= \left( B_{s_1} -\frac {s_1} t B_t+ \frac {s_1}t (x-z), \dots, B_{s_n} - \frac {s_n}t B_t+ \frac {s_n} t (x-z)\right).
 \]
 The characteristic function of this vector is given by
 \[
 \EE[e^{  i \xi \cdot Z }]= \exp\left(-\frac 12 \EE \left [ \left|\sum_{j=1} ^n \xi^j \cdot B_{0,t}(s_j) \right|^2 \right] \right) e^{i\frac {s_1 + \cdots +s_n} t  (x-z) \cdot  \xi},
 \] 
 where we recall that $\{B_{0,t}(s),  s\in [0,t]\}$ denotes an $\ell$-dimensional Brownian bridge from zero to zero.
 This implies
 \begin{equation*}
 |\Phi(s,\xi)| \le \frac 1{n!}  |p_t *u_0(x)|  \exp \left( -\frac 12 {\rm Var} \left( \sum_{j=1}^n \xi^j \cdot B_{0,t}(s_j) \right) \right).
 \end{equation*}
 Substituting the previous estimate into (\ref{qu1}) yields
  \begin{equation} \label{qu2}
n! \|f_n(\cdot,t,x)\|^2_{{\mathcal{H}}^{\otimes n}}
\le C^n   |p_t* u_0(x)|^2  \int_{[0,t]_<^{n}}   \int_{\RR^{n\ell}}   \exp \left( -{\rm Var} \left( \sum_{j=1}^n \xi^j \cdot B_{0,t}(s_j) \right) \right) \mu(d\xi)  ds.
\end{equation}
Finally, from Lemmas 9.1 and   9.4 of  \cite{HLN} we conclude that  (\ref{chaos}) holds. 
\end{proof}

\section{Feynman-Kac formulas for the moments of the solution}
\label{sec: FK mom}
For any $\varepsilon>0$, we define
$\gamma_\varepsilon$ by
	\begin{equation}\label{eqn:ge}
		\gamma_ \varepsilon(x)=\frac1{(2 \pi)^\ell}\int_{\RR^\ell} e^{- \varepsilon|\xi|^2} e^{i \xi \cdot x}\mu(d \xi)\,.
	\end{equation}
Notice that for each $\varepsilon>0$, the spectral measure of $\gamma_ \varepsilon$ is $\mu_\ep(d\xi):= e^{-\varepsilon |\xi|^2}\mu(d \xi)$, which has finite total mass because  $\mu$ is a tempered measure. Thus, $\gamma_ \varepsilon$ is a bounded positive definite function. 
The next proposition is the key ingredient in the proof of the Feynman-Kac formula for the moments  of the solution to equation (\ref{SHE}) using Brownian  bridges.  
 
	\begin{proposition}\label{prop:expbridge}
		Suppose that the spatial covariance satisfies \ref{H1} or \ref{H2}. Let $\kappa$ be a real number. 
		Let $\{B^j_{0,t}(s), s\in [0,t]\}$, $j=1\dots, n$, be independent $\ell$-dimensional Brownian  bridges from $0$ to $0$. 
		Then for each $\varepsilon>0$, the function $F_\varepsilon: (\RR^\ell)^n \rightarrow \RR$ given by
		\begin{equation*}
			F_ \varepsilon(x^1,\dots,x^n)=\EE\exp\left\{\kappa \sum_{1\le j<k\le n} \int_0^t \int_0^t  \gamma_0(s-r) \gamma_ \varepsilon(B_{0,t}^j(s)- B^k_{0,t}(r)+x^j-x^k)dr ds\right\}
		\end{equation*}
		is well-defined and continuous. Moreover, as $\varepsilon\downarrow0$, $F_ \varepsilon$ converges uniformly to a limit function denoted by 
		\begin{equation}\label{eqn:Fnx}
			\EE\exp\left\{\kappa \sum_{1\le j<k\le n} \int_0^t \int_0^t    \gamma_0(s-r) \gamma( B_{0,t}^j(s)- B^k_{0,t}(r)+x^j-x^k)drds\right\}\,.
		\end{equation}
	\end{proposition}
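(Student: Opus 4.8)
The plan is to separate two issues: the finiteness and continuity of $F_\varepsilon$ for fixed $\varepsilon>0$ (easy), and the uniform convergence as $\varepsilon\downarrow0$ (the substantial part), after which the limit is automatically continuous and is \emph{defined} to be \eqref{eqn:Fnx}.

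\textbf{Fixed $\varepsilon$.} Since $\mu_\varepsilon(d\xi)=e^{-\varepsilon|\xi|^2}\mu(d\xi)$ has finite mass, $\gamma_\varepsilon$ is bounded, uniformly continuous and positive definite, so $|\gamma_\varepsilon(z)|\le\gamma_\varepsilon(0)$ for every $z$. Writing $\Xi_\varepsilon(x)$ for the random exponent in the definition of $F_\varepsilon$, where $x=(x^1,\dots,x^n)$, we get the deterministic bound $|\Xi_\varepsilon(x)|\le|\kappa|\binom n2\gamma_\varepsilon(0)\int_0^t\!\int_0^t\gamma_0(s-r)\,dr\,ds<\infty$, using that $\gamma_0$ is locally integrable. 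Hence $F_\varepsilon(x)=\EE[e^{\Xi_\varepsilon(x)}]$ is finite, and since $\gamma_\varepsilon$ is continuous the integrand is continuous in $x$ and bounded by a constant, so dominated convergence gives continuity of $F_\varepsilon$.

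\textbf{Reduction of the uniform convergence.} From the elementary inequality $|e^a-e^b|\le|a-b|(e^a+e^b)$ and the Cauchy--Schwarz inequality,
\[
\sup_x|F_\varepsilon(x)-F_{\varepsilon'}(x)|\ \le\ \Big(\sup_x\big\|\Xi_\varepsilon(x)-\Xi_{\varepsilon'}(x)\big\|_{L^2(\Omega)}\Big)\cdot2\sup_{0<\delta\le1,\ x}\big\|e^{\Xi_\delta(x)}\big\|_{L^2(\Omega)}\,.
\]
Thus it suffices to prove: (A) $\displaystyle\lim_{\varepsilon,\varepsilon'\downarrow0}\sup_x\|\Xi_\varepsilon(x)-\Xi_{\varepsilon'}(x)\|_{L^2(\Omega)}=0$; and (B) $\displaystyle\sup_{0<\varepsilon\le1,\ x}\EE\big[e^{2\Xi_\varepsilon(x)}\big]<\infty$. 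Both are obtained by passing to the spectral side via \eqref{eqn:ge} and computing the Gaussian expectations of complex exponentials of Brownian bridge increments. For (A), expanding each factor $\gamma_\varepsilon-\gamma_{\varepsilon'}$ through \eqref{eqn:ge} shows that $\EE[(\Xi_\varepsilon(x)-\Xi_{\varepsilon'}(x))^2]$ is a finite sum over pairs $(j,k),(j',k')$ of integrals against $\gamma_0(s-r)\gamma_0(s'-r')\big(e^{-\varepsilon|\xi|^2}-e^{-\varepsilon'|\xi|^2}\big)\big(e^{-\varepsilon|\eta|^2}-e^{-\varepsilon'|\eta|^2}\big)e^{i\Theta}\,\EE[e^{i\Psi}]\,\mu(d\xi)\mu(d\eta)$, where $\Theta$ is a real phase carrying the dependence on the $x$'s and $\Psi$ is a centred Gaussian built from the bridges, so that $|e^{i\Theta}|=1$ and $0\le\EE[e^{i\Psi}]=\exp(-\tfrac12\mathrm{Var}(\Psi))\le1$. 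The factors $|e^{-\varepsilon|\xi|^2}-e^{-\varepsilon'|\xi|^2}|$ are $\le1$ and vanish pointwise as $\varepsilon,\varepsilon'\downarrow0$, while the remaining integrand is nonnegative, independent of $x$, and integrable by the (two–bridge version of the) variational estimates of \cite{HLN} used already at the end of the proof of Theorem \ref{thmSk1}, now also exploiting the local integrability of $\gamma_0$; dominated convergence then yields (A) uniformly in $x$.

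\textbf{The exponential bound (B).} When $\kappa\le0$ and \ref{H2} holds we have $\gamma_\varepsilon=p_{2\varepsilon}*\gamma\ge0$, hence $\Xi_\varepsilon(x)\le0$ and $\EE[e^{2\Xi_\varepsilon(x)}]\le1$. In general one expands the exponential in its Taylor series (absolutely convergent for fixed $\varepsilon$ by the deterministic bound above), expands $\Xi_\varepsilon(x)^m$ into a multiple sum over $m$–tuples of pairs, writes every factor $\gamma_\varepsilon$ through \eqref{eqn:ge}, and uses $e^{-\varepsilon|\xi_l|^2}\le1$ to drop the mollifier and $|e^{i(\cdot)}|=1$ to drop the $x$–dependence, obtaining
\[
\big|\EE[\Xi_\varepsilon(x)^m]\big|\ \le\ |\kappa|^m\!\!\sum_{(j_l,k_l)_{l\le m}}\frac1{(2\pi)^{\ell m}}\int_{[0,t]^{2m}}\!\!\int_{\RR^{\ell m}}\prod_{l=1}^m\gamma_0(s_l-r_l)\,\EE\Big[\exp\Big(-\tfrac12\mathrm{Var}\Big(\textstyle\sum_{l=1}^m\xi_l\cdot\big(B^{j_l}_{0,t}(s_l)-B^{k_l}_{0,t}(r_l)\big)\Big)\Big)\Big]\prod_{l=1}^m\mu(d\xi_l)\,ds\,dr,
\]
a bound independent of $\varepsilon$ and $x$. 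Summing $\sum_m\frac{2^m}{m!}|\EE[\Xi_\varepsilon(x)^m]|$ produces an exponential–moment–type bound for the (spectral–side) functional $\sum_{j<k}\int_0^t\int_0^t\gamma_0(s-r)\gamma(B^j_{0,t}(s)-B^k_{0,t}(r))\,dr\,ds$, whose finiteness is again furnished by the estimates of \cite{HLN} — in case \ref{H1} this has to be read purely on the spectral side, where $\mu\ge0$ and $\gamma_0\ge0$, since $\gamma$ itself is then a sign–changing distribution. This gives (B). Granting (A) and (B), $(F_\varepsilon)$ is Cauchy in the supremum norm, hence converges uniformly to a continuous limit, which we denote by \eqref{eqn:Fnx}; when \ref{H2} holds, $\gamma_\varepsilon\to\gamma$ in $L^1_{\mathrm{loc}}$ and, combined with (B), dominated convergence identifies this limit with \eqref{eqn:Fnx} read literally. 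The main obstacle is step (B): producing the \emph{uniform} (in $\varepsilon$ and $x$) exponential integrability in the colored–in–time setting and, under \ref{H1}, controlling it in the absence of any pointwise positivity of $\gamma$.
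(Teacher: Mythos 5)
Your proposal is correct and follows essentially the same route as the paper; the difference is largely one of presentation. Where the paper outsources the uniform convergence to ``the same arguments as the proof of Proposition 4.2 in [HLN]'' and replaces [HLN, Lemma 4.1] by the black-box translation inequality \eqref{est:GGy}, you have simply unfolded that argument: the reduction to (A) and (B) via $|e^a-e^b|\le|a-b|(e^a+e^b)$ and Cauchy--Schwarz is the standard skeleton, and your treatment of the $x$-dependence by going to the spectral side and bounding $|e^{i\Theta}|\le1$ is exactly the computation that proves \eqref{est:GGy} -- the two are the same estimate in different clothing. Your (B) is the paper's claim \eqref{k9}, strengthened to hold uniformly in $x$ by the same phase-dropping; both reduce the finiteness to Lemmas 9.1 and 9.4 of [HLN] applied to $\int\exp(-\mathrm{Var}(\cdot))\,\mu(d\xi)\,ds$.

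One small point of rigor worth flagging: in your step (B) you expand $\Xi_\varepsilon(x)^m$ directly into a multi-sum over $m$-tuples of pairs $(j_l,k_l)$ and keep the joint variance over all $n$ bridges. To actually bring in the [HLN] lemmas, which concern a single Brownian bridge, the paper first applies H\"older's inequality to reduce to $n=2$ (so that the product over bridges factors by independence and the Cauchy--Schwarz/symmetrization step below \eqref{eqn:d.int.form} produces the one-bridge variance); you should do the same before, or instead of, the Taylor expansion. This is an easily fillable detail and does not affect the soundness of the strategy. Your handling of (H.1) -- that the positivity argument must live entirely on the spectral side where $\mu\ge0$ and $\gamma_0\ge0$, since $\gamma$ itself is a sign-changing distribution -- is exactly the point, and your remark about the identification of the limit under (H.2) versus ``by definition'' under (H.1) matches the paper's Remark after the Proposition.
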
 
	
	\begin{remark}
		Actually, for each $1 \le j<k \le n$, the integral
	\[
	\int_0^t 	\int_0^t  \gamma_0(s-r)  \gamma_{\varepsilon}( B_{0,t}^j(s)- B^k_{0,t}(r)+x^j-x^k)drds
	\]
	converges in $L^p(\Omega)$ as $\varepsilon$ tends to zero, for each $p\ge 1$, and we can also denote the limit as
	\[
	 \frac 1 {(2\pi)^{\ell}} \int_0^t\int_0^t\int_{\RR^\ell}  \gamma_0(s-r) e^{i \xi\cdot(B_{0,t}^j(s)- B^k_{0,t}(r)+x^j-x^k)}\mu(d \xi)drds.
	\]
	\end{remark}

	\begin{proof} 
		 We claim that  for every $\kappa\in\RR$
		\begin{equation}  \label{k9}
			\sup_{\varepsilon>0} \EE\exp\left\{\kappa \sum_{1\le j<k\le n} \int_0^t \int_0^t  \gamma_0(s-r) \gamma_ \varepsilon(B_{0,t}^j(s)- B^k_{0,t}(r))drds\right\}<\infty\,.
		\end{equation}
		By H\"older inequality, it suffices to show the previous inequality for $n=2$. For every    $d \in\NN$, we have 
	\begin{eqnarray*}
	&&\EE\left[   \int_0^t \int_0^t  \gamma_ \varepsilon(B_{0,t}^1(s)- B_{0,t}^2(r))\gamma_0(s-r) drds \right]^d \\  
	&&  \quad =\EE \left [  \frac 1 {(2\pi)^{\ell}}\int_0^t  \int_0^t \int_{\RR^\ell} e^{i \xi \cdot(B_{0,t}^1(s)- B_{0,t}^2(r))} \gamma_0(s-r) \mu_{\varepsilon}(d\xi) drds\right]^d\\ 
	&& \quad = \frac 1 {(2\pi)^{\ell d}}\int_{[0,t]^{2d}} \int_{(\RR^\ell)^d} \EE \exp\left\{i \sum_{k=1}^d \xi^k \cdot\left( B_{0,t}^1(s_k)- B_{0,t}^2(r_k) \right) \right\} \prod_{k=1}^d \gamma_0(s_k-r_k) \mu_\varepsilon(d\xi) drds\,,
	\end{eqnarray*}
	where we use the notation
	$\mu_\varepsilon(d\xi)  = \prod_{k=1}^d e^{-\varepsilon |\xi^k|^2}\mu(d\xi^k)$ and  $ds = ds_1 \cdots ds_d$.
	Using the  independence of $B^1$ and $B^2$, Cauchy-Schwarz inequality, the inequality $ab \le \frac 12 (a^2+b^2)$ and the fact that $\gamma_0$ is locally integrable, we obtain
		\begin{eqnarray}  \notag
	&&\EE\left[   \int_0^t \int_0^t  \gamma_ \varepsilon(B_{0,t}^1(s)- B_{0,t}^2(r))\gamma_0(s-r) drds \right]^d \\  
	&& \quad \le C^d \int_{[0,t]^d} \int_{(\RR^\ell)^d} \left| \EE \exp\left\{i \sum_{k=1}^d \xi^k \cdot B^1_{0,t}(s_k)  \right\} \right|^2 \mu_\varepsilon(d\xi) ds\\ \notag
	&& \quad  \le  C^d d!\int_{[0,t]^d_<} \int_{(\RR^\ell)^d} \exp\left\{-{\rm Var} \left(  \sum_{k=1}^d \xi^k \cdot B^1_{0,t}(s_k)  \right) \right\}\mu_\varepsilon(d\xi) ds,\\ \notag
	\label{eqn:d.int.form}
	\end{eqnarray}
	where   $[0,t]^d_<$ is defined in (\ref{k8}). Then (\ref{k9}) follows from the  Taylor expansion of $e^x$ and Lemmas 9.1 and 9.4 in \cite{HLN}. Finally, the proof of the uniform convergence of  $F_ \varepsilon$ as $\varepsilon$ tends to zero can be done by the same arguments as  in the proof of Proposition
4.2 in \cite{HLN}.  Notice that  Lemma 4.1 in \cite{HLN} has to be replaced by the inequality
\begin{multline}\label{est:GGy}
	\EE \exp\left\{\int_{[0,t]^2}  \sum_{1\le j<k\le n}  \kappa \gamma_\varepsilon( G_s^j- G_r^k+ y_{s,r}^{jk})drds\right\}
	\\\le \EE\exp\left\{\int_{[0,t]^2}\sum_{1\le j<k\le n}  |\kappa| \gamma_\varepsilon( G_s^j- G_r^k)dr ds\right\},
\end{multline}
where $\kappa \in \RR$, $G=(G^1,\dots,G^n)\in(\RR^\ell)^n$ is a centered Gaussian process indexed by $[0,t]$ and
		   $y=(y^{jk})_{1\le j<k\le n} :[0,t]^2\to(\RR^\ell)^{n(n-1)/2}$  is a measurable matrix-valued function.
	\end{proof}

	As an application, we have the following Feynman-Kac formula based on Brownian bridges.
	\begin{proposition}    \label{prop1}
	Suppose that the spatial covariance satisfies \ref{H1} or \ref{H2}.  Suppose that $\{B_{0,t}^j(s),s\in[0,t]\}$, $j=1,\dots, n$ are $\ell$-dimensional independent Brownian bridges from zero to zero.
		Then for every $x^1,\dots,x^n\in\RR^\ell$,
		\begin{align}\label{eqn:FKbridge}
			\EE\left[\prod_{j=1}^n u(t,x^j)\right]
			&=\int_{(\RR^\ell)^n}   \notag 
			\EE\exp\Big\{\int_{[0,t]^2} \sum_{1\le j<k\le n} \gamma\left(B_{0,t}^j(s)-B_{0,t}^k(r)+x^j-x^k+\frac st y^j- \frac rt y^k\right)\\
			 &\quad\times \gamma_0(s-r) drds \Big\}   \prod_{j=1}^n [u_0(x^j+y^j)p_t(y^j)]d y^1\cdots d y^n\,.
		\end{align}
	\end{proposition}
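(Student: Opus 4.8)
The plan is to derive the moment formula \eqref{eqn:FKbridge} from the chaos expansion of $u(t,x)$ established in Theorem \ref{thmSk1}, by first proving it with the smoothed covariance $\gamma_\varepsilon$ and then passing to the limit $\varepsilon\downarrow 0$ using Proposition \ref{prop:expbridge}. The starting point is the observation, already recorded in Section \ref{sec:ex-uniq-chaos}, that for each $j$ the kernels $f_n(\cdot,t,x^j)$ are the iterated-heat-kernel expressions, so that $\EE[\prod_{j=1}^n u(t,x^j)] = \sum_{\bm n} \prod_j$ (one term per multi-index), and each summand is an inner product in a tensor power of $\mathcal H$ against products of the $\gamma_0$-factor and the spatial covariance. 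The clean way to organize the bookkeeping is to work at the level of the mollified equation: replace $W$ by a noise $W_\varepsilon$ whose spatial covariance is $\gamma_\varepsilon$ (a bounded, smooth, positive-definite function), for which the classical Feynman--Kac formula for moments in terms of Brownian motions is available (as in \cite{HHNT}, or directly by summing the chaos series and recognizing the Brownian-chaos structure). For such a smooth noise one has
\[
\EE\left[\prod_{j=1}^n u_\varepsilon(t,x^j)\right]
= \EE^B \exp\left\{\sum_{1\le j<k\le n}\int_0^t\int_0^t \gamma_0(s-r)\,\gamma_\varepsilon\big(B^j_s-B^k_r + x^j - x^k\big)\,dr\,ds\right\},
\]
where $B^1,\dots,B^n$ are independent $\ell$-dimensional Brownian motions, provided the right-hand side is finite; finiteness follows from the estimate \eqref{k9} in Proposition \ref{prop:expbridge} combined with the density formula \eqref{id:density} relating Brownian motion and Brownian bridge.

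The next step is to convert the Brownian-motion representation into the Brownian-bridge representation. Fix the endpoints and decompose each Brownian motion $B^j$ on $[0,t]$ as $B^j_s = B^j_{0,t}(s) + \tfrac st B^j_t$, where $B^j_{0,t}$ is a bridge from $0$ to $0$ and $B^j_t$ is an independent centered Gaussian vector with covariance $tI$; equivalently, condition on the terminal values $B^j_t = y^j$ and integrate against the Gaussian density $p_t(y^j)\,dy^j$. Under this conditioning $B^j_s - x^j$ has the same law as $B^j_{0,t}(s) + \tfrac st y^j$ up to a shift, and carrying the constants through produces exactly the argument $B^j_{0,t}(s) - B^k_{0,t}(r) + x^j - x^k + \tfrac st y^j - \tfrac rt y^k$ appearing in \eqref{eqn:FKbridge}, together with the weight $\prod_j p_t(y^j)$. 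At this point one has, for every $\varepsilon>0$,
\[
\EE\left[\prod_{j=1}^n u_\varepsilon(t,x^j)\right]
=\int_{(\RR^\ell)^n}\EE^{B_{0,t}}\exp\left\{\int_{[0,t]^2}\sum_{j<k}\gamma_\varepsilon\big(B^j_{0,t}(s)-B^k_{0,t}(r)+x^j-x^k+\tfrac st y^j-\tfrac rt y^k\big)\gamma_0(s-r)\,dr\,ds\right\}\prod_j p_t(y^j)\,dy^j,
\]
where I have replaced $u_0\equiv 1$; for a general $u_0$ satisfying \eqref{eq14} one inserts $\prod_j u_0(x^j+y^j)$ via the shift in the terminal point of the bridge, exactly as in the formula \eqref{eqn:FKbridge}.

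Finally I would pass to the limit $\varepsilon\downarrow 0$ on both sides. On the left, $u_\varepsilon(t,x^j)\to u(t,x^j)$ in $L^p(\Omega)$ for every $p$ (this is the standard approximation argument: the chaos norms \eqref{chaos} for $u_\varepsilon$ are bounded uniformly in $\varepsilon$ by the same Lemmas 9.1 and 9.4 of \cite{HLN}, and converge termwise, so the products converge in $L^1$). On the right, for fixed terminal values $y^1,\dots,y^n$ the inner bridge-expectation converges to the desired limit by Proposition \ref{prop:expbridge} (the translation by $\tfrac st y^j - \tfrac rt y^k$ is harmless: it is absorbed into the $x^j-x^k$ there, or handled directly via the bound \eqref{est:GGy}), and the convergence is dominated uniformly in $\varepsilon$, again by \eqref{k9} together with \eqref{id:density}, so that one may invoke dominated convergence in the $dy$-integral. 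The main obstacle is precisely this last interchange of limit and integration over the unbounded terminal variables $y^j$: one needs a dominating function integrable against $\prod_j [|u_0(x^j+y^j)|\,p_t(y^j)]$ that is uniform in $\varepsilon$, and the key point is that the bridge-expectation, after using the density bound \eqref{id:density} and the uniform estimate \eqref{k9}, is controlled by a constant independent of $\varepsilon$ and of $y$, so that the Gaussian weight $p_t(y^j)$ and the integrability condition \eqref{eq14} on $u_0$ furnish the integrable majorant. Once this is in place, letting $\varepsilon\downarrow 0$ yields \eqref{eqn:FKbridge}.
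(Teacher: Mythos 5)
Your proposal is correct and follows essentially the same route as the paper's proof: approximate by the mollified noise with bounded covariance $\gamma_\varepsilon$, invoke the Feynman--Kac moment formula of \cite{HHNT} for Brownian motions (noting that the hypotheses there can be relaxed to bounded, possibly sign-changing $\gamma_\varepsilon$ and to $u_0$ satisfying \eqref{eq14}), condition on the terminal values $B^j(t)=y^j$ to rewrite the expectation in terms of Brownian bridges weighted by $p_t(y^j)$, and then pass to the limit $\varepsilon\downarrow 0$ using Proposition \ref{prop:expbridge} and dominated convergence on the right-hand side (with \eqref{est:GGy} to remove the $y$-dependence and \eqref{k9} for the uniform-in-$\varepsilon$ bound) together with the $L^2$ convergence of $u_\varepsilon(t,x^j)$ to $u(t,x^j)$ and uniform moment bounds on the left. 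The only minor imprecision is your attribution of the uniform dominating bound partly to the density formula \eqref{id:density} rather than to \eqref{est:GGy}, but the ingredient you actually need is the latter, which you also cite.
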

	\begin{proof} 
	For any $\ep>0$ we denote by $u_\ep(t,x)$ the solution to the stochastic heat equation
	\[
	\frac {\partial u_\ep }{\partial t }=\frac12  \Delta u_{\ep} +u_\ep \dot{W_\ep}\,,\quad u(0,\cdot)=u_0(\cdot)\,,
	\]
	where $\dot{W}_\ep$ is  a Gaussian centered  noise with covariance
	\[
	\EE [\dot{W}_\ep(s,y) \dot{W}_\ep(t,x)]= \gamma_0(s-t) \gamma_\ep(x-y).
	\]	
 From the results  of Hu, Huang, Nualart and Tindel  \cite{HHNT} we have the following Feynman-Kac formula for the moments of $u_\ep$ 
	 \begin{eqnarray} 
		\EE\left[ \prod_{j=1}^n u_\ep(t,x^j) \right] &=&\EE \Bigg( \prod_{j=1}^n u_0(B^j(t)+x^j)  \notag 
		  \exp\Big\{\sum_{1\le j<k\le n}\int_{[0,t]^2} \gamma_\ep(B^j(s)-B^k(r)+(x^j-x^k) )\\
		  &&\times \gamma_0(s-r) drds \Big\}\Bigg), \label{eqn:FKn}
	\end{eqnarray}
	where $\{B^j, j=1,\dots, n\}$ are independent $\ell$-dimensional standard Brownian motions.  We remark that in \cite{HHNT} it is required that $\gamma$ is a non-negative function, which is not necessarily true for $\gamma_\ep$. However,   $\gamma_\ep$ is bounded, and, in this case, it is not difficult to show that  (\ref{eqn:FKn})  still holds. Also, \cite{HHNT} assumes that $u_0$ is bounded, but it is not difficult to show that (\ref{eqn:FKn}) still holds assuming (\ref{eq14}). 
	
	For each $j=1,\dots,n$ and every fixed $t>0$, the Brownian motion $B^j$ admits the following decomposition
		\begin{equation}
			B^j(s)=B_{0,t}^j(s)+\frac st B^j(t),
		\end{equation}
		where $\{B_{0,t}^j(s),s\in[0,t]\}$, $j=1,\dots,n$, are Brownian bridges on $\mathbb{R}^\ell$ independent from $\{B^j(t), 1\le  j \le  n\}$ and from each other. Thus, identity \eqref{eqn:FKn} can be written as
	\begin{eqnarray}\label{eq5}
			\EE\left[\prod_{j=1}^n u_\varepsilon(t,x^j)\right]
			&=&\int_{(\RR^\ell)^n}   \notag 
			\EE\exp\Big\{\int_{[0,t]^2} \sum_{1\le j<k\le n} \gamma_\varepsilon \left(B_{0,t}^j(s)-B_{0,t}^k(r)+x^j-x^k+\frac st y^j- \frac rt y^k\right)\\
			 &&\times \gamma_0(s-r) drds \Big\}   \prod_{j=1}^n [u_0(x^j+y^j)p_t(y^j)]d y^1\cdots d y^n\,.
		\end{eqnarray}

		From Proposition \ref{prop:expbridge} and the  dominated convergence theorem, the right-hand side of 
		(\ref{eq5}) converges to the right-hand side of (\ref{eqn:FKbridge}).  From the Wiener chaos expansion of the solution  and the computations in  the proof of Theorem \ref{thmSk1}, it follows easily that $u_\ep(t,x)$ converges in $L^2(\Omega)$ to $u(t,x)$. On the other hand, from (\ref{eq5}) it follows that the moments of all orders of $u_\ep(t,x)$ are uniformly bounded in $\ep$. As a consequence, the left-hand side of 
		(\ref{eq5}) converges to the left-hand side of (\ref{eqn:FKbridge}).  This completes the proof. 
	\end{proof}
 
 \begin{corollary}
   \label{cor1} Under the assumptions of Proposition \ref{prop1} we have, for any $x\in \RR^\ell$
		\begin{equation}\label{k34}
			\EE\left[ u(t,x)^n\right]=
			\EE \Bigg( \prod_{j=1}^n u_0(B^j(t)+x)  
		  \exp\Big\{\sum_{1\le j<k\le n}\int_{[0,t]^2} \gamma(B^j(s)-B^k(r)) \gamma_0(s-r) drds \Big\}\Bigg),
		\end{equation}
		where $B^j$, $j=1,\dots, n$, are independent $\ell$-dimensional Brownian motions.
 \end{corollary}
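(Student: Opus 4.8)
The plan is to derive Corollary \ref{cor1} as the special case $x^1 = \cdots = x^n = x$ of the Feynman-Kac formula in Proposition \ref{prop1}, undoing the Brownian-bridge decomposition that was introduced in that proof. First I would set all the $x^j$ equal to a common point $x$ in \eqref{eqn:FKbridge}. The right-hand side then becomes an integral over $(y^1,\dots,y^n) \in (\RR^\ell)^n$ of the expectation of an exponential whose exponent involves the translated bridge increments $B_{0,t}^j(s) - B_{0,t}^k(r) + \frac st y^j - \frac rt y^k$, weighted by $\prod_j [u_0(x+y^j) p_t(y^j)]$. The key observation is that, for each fixed $j$, the process $\{B_{0,t}^j(s) + \frac st y^j, s \in [0,t]\}$ is exactly a Brownian bridge from $0$ to $y^j$, which by the decomposition recalled in Section \ref{sub:brownian_bridges} has the same law as $\{B^j(s) - \frac st B^j(t) + \frac st y^j\}$; integrating $y^j$ against the Gaussian weight $p_t(y^j) dy^j$ reconstitutes an unconditioned Brownian motion with $B^j(t)$ playing the role of $y^j$.

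Concretely, the main step is to invoke Fubini (justified by the integrability already established — Proposition \ref{prop:expbridge} gives the finiteness and continuity of the bridge functional, and \eqref{eq14} controls $u_0$) to interchange the $y$-integration with the bridge expectation, and then to recognize, for each $j$, that
\[
\int_{\RR^\ell} \EE_{B_{0,t}^j}[\,\cdots\,] \, u_0(x+y^j) p_t(y^j) \, dy^j
= \EE_{B^j}\big[\,\cdots\, u_0(x + B^j(t))\,\big],
\]
where on the right one replaces every occurrence of $B_{0,t}^j(s) + \frac st y^j$ by $B^j(s)$. This is precisely the content of the identity, quoted just before Definition \ref{def1} in effect and used in the proof of Proposition \ref{prop1}, that a standard Brownian motion decomposes as the sum of an independent bridge and the linear interpolation $\frac st B^j(t)$, together with the fact that $B^j(t)$ has density $p_t$. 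Performing this substitution simultaneously for $j = 1,\dots,n$ turns the exponent
\[
\sum_{1\le j<k\le n} \gamma\big(B_{0,t}^j(s) - B_{0,t}^k(r) + \tfrac st y^j - \tfrac rt y^k\big) \gamma_0(s-r)
\]
into $\sum_{1\le j<k\le n} \gamma(B^j(s) - B^k(r)) \gamma_0(s-r)$, and the prefactor $\prod_j u_0(x+y^j) p_t(y^j)$ into $\prod_j u_0(x + B^j(t))$, which is exactly \eqref{k34}.

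Alternatively — and this is perhaps the cleanest route to write up — one can avoid re-deriving anything by going directly back to formula \eqref{eqn:FKn} from \cite{HHNT}, set $x^j \equiv x$ there, and pass to the limit $\varepsilon \downarrow 0$: the exponential functional $\sum_{j<k}\int_{[0,t]^2}\gamma_\varepsilon(B^j(s)-B^k(r))\gamma_0(s-r)\,drds$ converges (in every $L^p$, by the Remark following Proposition \ref{prop:expbridge} applied with the bridges replaced by Brownian motions, or equivalently by the estimates in the proof of Proposition \ref{prop:expbridge} after undoing the bridge decomposition) to $\sum_{j<k}\int_{[0,t]^2}\gamma(B^j(s)-B^k(r))\gamma_0(s-r)\,drds$, while the left-hand side converges since $u_\varepsilon(t,x) \to u(t,x)$ in $L^2(\Omega)$ with uniformly bounded moments. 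The dominated convergence on the right is licensed by the uniform exponential integrability \eqref{k9}.

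I expect the only genuine obstacle to be the bookkeeping in the change of variables — making sure that the $\frac st$-interpolation terms line up correctly across all $\binom n2$ pairs when the $y^j$'s are simultaneously absorbed into the Brownian motions — and the justification of Fubini, which however is immediate from the absolute integrability already furnished by Proposition \ref{prop:expbridge} and condition \eqref{eq14}. No new analytic estimate is needed; the corollary is essentially a re-indexing of Proposition \ref{prop1}, so the write-up should be short.
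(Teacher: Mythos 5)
Your proposal is correct, and the first route you describe — setting $x^1=\cdots=x^n=x$ in \eqref{eqn:FKbridge} and undoing the bridge decomposition by disintegrating $B^j(s)=B_{0,t}^j(s)+\frac st B^j(t)$, with $B^j(t)\sim p_t$ independent of the bridge — is precisely the intended (and essentially the only natural) derivation; the paper states the corollary without proof because it is this one-line disintegration of Proposition \ref{prop1}. Your alternative route via \eqref{eqn:FKn} and $\varepsilon\downarrow0$ also works and is equivalent in substance, since the proof of Proposition \ref{prop1} already passes through exactly that limit; nothing further is needed.
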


	\begin{remark}
	If the initial condition $u_0$ is nonnegative, one can show that $u(t,x) \ge 0$ a.s., for all $t\ge 0$ and $x\in \RR^\ell$. 
	This follows from the fact that $u_\ep(t,x)$ is nonnegative for any $\ep$, where $u_\ep$ is the random field introduced in the proof of   Proposition \ref{prop1}.
	\end{remark}

\section{Lyapunov exponents  of Brownian bridges} 
\label{sec:Lyapunov-BB}
The following variational formula occurs frequently in our considerations,
 \begin{equation}  \label{egamma}
	\mathcal{E}(\alpha_0,\gamma)=\sup_{g \in \mathcal{A}_\ell} \left\{\int_{[0,1]^2} \int_{\RR^{2\ell}} \frac{\gamma(x-y)}{|s-r|^{\alpha_0}} g^2(s,x) g^2(r,y) dx dy dr ds- \frac{1}{2} \int_0^1 \int_{\RR^\ell} |\nabla_x g(s,x)|^2 dx ds \right\},
	\end{equation}
where   $\mathcal{A}_\ell$ is the  class of functions defined by
\begin{equation}   \label{aell}
\mathcal{A}_{\ell} = \left\{ g: g(s,\cdot) \in W^{1,2}(\RR^\ell) \ \text{and}\ \int_{\RR^{\ell}} g^2(s,x)dx=1, \, {\rm for  \,\, all} \, 0 \leq s \leq 1 \right\}\,,
\end{equation}
{where $\alpha_0 \in [0,1)$ and $\gamma$ is a generalized covariance function.}

In general, if $\eta_0$ is a covariance function {(nonnegative and nonnegative definite locally integrable function)} on $\RR$ and $\eta$ is a generalized covariance function on $\RR^\ell$ with spectral measure $\nu$, we can define the variational  {quantity}
\begin{multline}\label{def.eeta}
	\cee(\eta_0,\eta)
	=\sup_{g \in \mathcal{A}_\ell} \left\{\int_{[0,1]^2} \int_{\RR^{2\ell}}\eta(x-y)\eta_0(s-r) g^2(s,x) g^2(r,y) dx dy dr ds
	\rt.\\\lt.- \frac{1}{2} \int_0^1 \int_{\RR^\ell} |\nabla_x g(s,x)|^2 dx ds \right\}\,.
\end{multline}
It is evident that $\cee(\alpha_0,\gamma)=\cee(|\cdot|^{-\alpha_0},\gamma)$.
The first integration in \eqref{def.eeta} is defined through Fourier transforms,
	\begin{multline}\label{fourier.g}
		\int_{[0,1]^2} \int_{\RR^{2\ell}}\eta(x-y)\eta_0(s-r) g^2(s,x) g^2(r,y) dx dy dr ds
		\\= \frac 1 {(2\pi)^{\ell}}\int_{[0,1]^2} \int_{\RR^\ell}  \mathcal{F} g^2(s,\cdot)(\xi) \overline{\mathcal{F}g^2(r,\cdot)}(\xi) \nu(d\xi) \eta_0(s-r) drds\,.
	\end{multline}
	A priori, $\cee(\eta_0,\eta)$ can be infinite. However, if $\eta_0$ belongs to $L^1([-1,1])$ and $\eta$ satisfies the Dalang's condition \eqref{k5} (as in all cases in the current article), then $\cee(\eta_0,\eta)$ is finite. 
	Indeed, applying Cauchy-Schwarz inequality, we have
	\begin{align*}
		&\int_{[0,1]^2} \int_{\RR^\ell}  \mathcal{F} g^2(s,\cdot)(\xi) \overline{\mathcal{F}g^2(r,\cdot)}(\xi) \nu(d\xi) \eta_0(s-r) drds
		\\&\le \int_0^1\int_0^1\lt[\int_{\RR^\ell}|\cff g^2(s,\cdot)(\xi)|^2 \nu(d \xi) \rt]^{\frac12}\lt[\int_{\RR^\ell}|\cff g^2(r,\cdot)(\xi)|^2 \nu(d \xi) \rt]^{\frac12}\eta_0(s-r)dsdr
		\\&\le \frac12\int_0^1\int_0^1 \int_{\RR^\ell}|\cff g^2(s,\cdot)(\xi)|^2 \nu(d \xi)\eta_0(s-r)dsdr
		\\&\quad+\frac12\int_0^1\int_0^1 \int_{\RR^\ell}|\cff g^2(r,\cdot)(\xi)|^2 \nu(d \xi)\eta_0(s-r)dsdr
		\\&\le \|\eta_0\|_{L^1([-1,1])}\int_0^1\int_{\RR^\ell}|\cff g^2(s,\cdot)(\xi)|^2 \nu(d \xi)ds\,.
	\end{align*}
	Moreover, for each $s\in [0,1]$,  $ | \mathcal{F} g^2(s,\cdot)(\xi)|$ is bounded by $1$ and by $\frac {2\sqrt\ell}{|\xi|} \| \nabla_x g(s,\cdot)\|_{L^2(\RR^\ell)}$.  In fact, integrating by parts, we have
	\begin{align*}
		| \mathcal{F} g^2(s,\cdot)(\xi)| 
		&\le \min_{1\le j \le \ell} \frac 1{|\xi_j|}  \int_{\RR^\ell} \left| \frac {\partial g^2(s,x)}{\partial x_j} \right| dx 
	 \\&{=}  { \min_{1\le j \le \ell} \frac 1{|\xi_j|}   \left \|  \frac {\partial g^2(s,\cdot)}{\partial x_j} \right\| _{L^1(\RR^\ell)} \le  \frac {2\sqrt{\ell}}{|\xi|} \| \nabla_x g(s,\cdot)\|_{L^2(\RR^\ell)}. }
	\end{align*}
	It follows that for every $R>0$, 
	\begin{align*}
		&\int_0^1\int_{\RR^\ell}|\cff g^2(s,\cdot)(\xi)|^2 \nu(d \xi)ds
		\\&=\int_0^1\int_{|\xi|\le R}|\cff g^2(s,\cdot)(\xi)|^2 \nu(d \xi)ds+\int_0^1\int_{|\xi|>R}|\cff g^2(s,\cdot)(\xi)|^2 \nu(d \xi)ds
		\\&\le \int_{|\xi|\le R}\nu(d \xi)+4\ell\int_{|\xi|>R}\frac{\nu(d \xi)}{|\xi|^2}\int_0^1\int_{\RR^\ell}|\nabla_x g(s,x)|^2dxds\,.
	\end{align*}
	Since $\nu$ satisfies Dalang's condition $\int_{\RR^\ell}\frac{\nu(d \xi)}{1+|\xi|^2}<\infty$, we can choose $R>0$ such that \[
	{\| \eta_0\|_{L^1([-1,1])} }
	(2 \pi)^{-\ell}4\ell\int_{|\xi|>R}\frac{\nu(d \xi)}{|\xi|^2}<\frac12.
	\] This implies that the right-hand side of \eqref{def.eeta} is at most ${\| \eta_0\|_{L^1([-1,1])} }(2 \pi)^{-\ell}\int_{|\xi|<R}\nu(d \xi)$, which is also an upper bound for $\cee(\eta_0,\eta)$.

To conclude our discussion on basic properties of $\cee(\eta_0,\eta)$, we describe a useful comparison principle. Suppose $\eta_0,\tilde \eta_0$ are covariance functions on $\RR$ and $\eta,\tilde\eta$ are generalized covariance functions on $\RR^\ell$ such that the spectral measures of $\eta_0, \eta$ are less than the spectral measures of $\tilde \eta_0, \tilde \eta$ respectively. In other words, $\eta_0\le\tilde \eta_0$ and $\eta\le \tilde \eta$ in quadratic sense. Then
\begin{equation}\label{cee.compare}
	\cee(\eta_0,\eta)\le \cee(\tilde\eta_0,\tilde \eta)\,.
\end{equation}
This is immediate from \eqref{def.eeta}. 

\medskip
In the remaining of the article, we consider the following  scaling condition on the noise: 
\begin{enumerate}[leftmargin=0cm,itemindent=1.4cm,label=(S)]
	\item\label{con:S} There exist $\alpha_0\in (0,1)$ and $\alpha\in(0,2)$ such that $\gamma_0(t)=|t|^{-\alpha_0}$ and  $\gamma(cx)=c^{-\alpha} \gamma(x)$ for all $t,c>0$ and $x\in \RR^\ell$.
\end{enumerate}
 Under the scaling assumption \ref{con:S}, it is easy to check that  for every $\theta>0$,
\begin{equation}\label{id:Escale}
 	\mathcal{E}(\alpha_0,\theta\gamma)=\theta^{\frac2{2- \alpha}}\mathcal{E}(\alpha_0,\gamma)\,.
\end{equation} 
\begin{proposition}\label{prop:gg}
	Let $K$ and $\psi$ be {symmetric} functions in $L^2(\RR^\ell)$ and $L^2(\RR)$ respectively. We assume in addition that $\psi$ is nonnegative and  $\psi'$ exists and belongs to $L^2(\RR)$.
	The functions $\eta_0=\psi*\psi$ and $\eta=K*K$ are bounded and nonnegative definite functions. Then for every $\theta>0$ and every integer $n\ge1$,
	\begin{equation}\label{tmp.54}
		\limsup_{t\to\infty}\frac1t\log \EE\exp\lt\{\frac{\theta}{(n-1)t}\sum_{1\le j\neq k\le n}\int_0^t\int_0^t \eta(B^j(s)-B^k(r))\eta_0(\frac{s-r}t) dsdr \rt\}\le n  \cee(\theta\eta_0,\eta)\,,
	\end{equation}
	where $\cee(\eta_0,\eta)$ is the variational {quantity} defined in \eqref{def.eeta}.
\end{proposition}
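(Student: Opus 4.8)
The plan is to establish the two preliminary assertions, then attack the inequality \eqref{tmp.54} by: rescaling time, using the convolution structure of $\eta$ and $\eta_0$ to write the exponent as a Hilbert-space inner product, reducing the $n$-point expectation to a single Brownian motion, and finishing with a Donsker--Varadhan upper bound. The preliminary assertions are immediate: $\mathcal{F}\eta=(\mathcal{F}K)^2\ge0$ and $\mathcal{F}\eta_0=(\mathcal{F}\psi)^2\ge0$ are integrable (since $\mathcal{F}K,\mathcal{F}\psi\in L^2$), so $\eta,\eta_0$ are bounded, continuous and nonnegative definite; moreover $\psi'\in L^2$ forces $\eta_0'=\psi'*\psi$ to be bounded and continuous, so $\eta_0\in C^1$.

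For \eqref{tmp.54}, I would first substitute $s=tu$, $r=tv$, which turns the exponent into $\frac{\theta t}{n-1}\sum_{1\le j\neq k\le n}\int_0^1\int_0^1 \eta(B^j(tu)-B^k(tv))\,\eta_0(u-v)\,du\,dv$. Using that $K,\psi$ are symmetric and $\eta=K*K$, $\eta_0=\psi*\psi$, for a path $B$ set $\Xi_B(z,b)=\int_0^1 K(B(tu)-z)\,\psi(u-b)\,du$ for $(z,b)\in\RR^\ell\times\RR$; a Fubini computation (using $K(-\cdot)=K$, $\psi(-\cdot)=\psi$) gives $\int_0^1\int_0^1\eta(B^j(tu)-B^k(tv))\eta_0(u-v)\,du\,dv=\langle\Xi_{B^j},\Xi_{B^k}\rangle_{L^2(\RR^\ell\times\RR)}$, and boundedness of $\eta,\eta_0$ shows $\Xi_B\in L^2(\RR^\ell\times\RR)$ with $\|\Xi_B\|^2$ bounded. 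Then the elementary inequality $2\langle a,b\rangle\le\|a\|^2+\|b\|^2$ together with a count of pairs yields $\frac1{n-1}\sum_{j\neq k}\langle\Xi_{B^j},\Xi_{B^k}\rangle\le\sum_{j=1}^n\|\Xi_{B^j}\|^2$, so by independence of the $B^j$ the expectation in \eqref{tmp.54} is at most $\bigl(\EE\exp\{\theta t\,\|\Xi_B\|_{L^2}^2\}\bigr)^n$. (For $n=1$ the exponent is empty and $\cee(\theta\eta_0,\eta)\ge0$ by \eqref{cee.compare}, so there is nothing to prove.) Since $\theta t\|\Xi_B\|^2=\theta t\int_0^1\int_0^1\eta(B(tu)-B(tv))\eta_0(u-v)\,du\,dv$, this reduces \eqref{tmp.54} to the single Brownian motion bound $\limsup_{t\to\infty}\frac1t\log\EE\exp\{\theta t\int_0^1\int_0^1 \eta(B(tu)-B(tv))\eta_0(u-v)\,du\,dv\}\le\cee(\theta\eta_0,\eta)$, which is now $n$-independent.

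For this single-particle bound I would use that $u\mapsto B(tu)$ on $[0,1]$ is a Brownian motion run at speed $t$, so that on the macroscopic window $[0,1]$ it equilibrates instantaneously at the level of exponential asymptotics in $t$: the entropic cost of prescribing its occupation density at time $u$ to be $g^2(u,\cdot)$, for $g\in\mathcal{A}_{\ell}$, is $\frac t2\int_0^1\int_{\RR^\ell}|\nabla_x g(u,x)|^2\,dx\,du$, with the Dawson--G\"artner time-regularity corrections only of order $O(1)$ — which is precisely why \eqref{def.eeta} has no time-derivative term. Since $\eta,\eta_0$ are bounded and continuous (and $\eta_0\in C^1$), the functional $g\mapsto\theta\int_{[0,1]^2}\int_{\RR^{2\ell}}\eta(x-y)\eta_0(u-v)g^2(u,x)g^2(v,y)\,dx\,dy\,du\,dv$ is bounded and continuous on occupation flows, and Varadhan's lemma then delivers the estimate with right-hand side $\sup_{g\in\mathcal{A}_{\ell}}\{\theta\int_{[0,1]^2}\int_{\RR^{2\ell}}\eta(x-y)\eta_0(u-v)g^2(u,x)g^2(v,y)-\frac12\int_0^1\int_{\RR^\ell}|\nabla_x g|^2\}=\cee(\theta\eta_0,\eta)$.

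The one genuinely hard ingredient is this last step, and to make it rigorous rather than heuristic I would follow Chen's scheme: partition $[0,1]$ into $m$ equal blocks; on each product of blocks replace $\eta_0((s-r)/t)$ by a constant, incurring an error that is uniform in $t$ and tends to $0$ as $m\to\infty$ by uniform continuity of $\eta_0$; use the Markov property and submultiplicativity to dominate the exponential moment by a product over blocks of Feynman--Kac functionals; bound each factor by the top of the spectrum of the associated (nonlinear, mean-field) variational problem, a quantity of the advertised shape $\sup_g\{(\cdot)-\tfrac12\int|\nabla_x g|^2\}$; and let $m\to\infty$. The boundedness and smoothness coming from the $K*K$ and $\psi*\psi$ structure — the same structure that makes the inner-product representation work — is exactly what keeps every approximation error negligible in the limit. (Alternatively one can apply Donsker--Varadhan directly to the $n$ independent particles and their joint occupation flow; symmetry and convexity of the rate functional force the optimal flow to use the same $g$ for all particles, producing the factor $n$ at once, but the single-particle reduction above is cleaner and isolates the hard step.)
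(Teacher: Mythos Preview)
Your reduction to a single Brownian motion via the inner-product representation $\langle\Xi_{B^j},\Xi_{B^k}\rangle$ and the inequality $\frac1{n-1}\sum_{j\neq k}\langle\Xi_{B^j},\Xi_{B^k}\rangle\le\sum_j\|\Xi_{B^j}\|^2$ is correct and is exactly how the paper begins. The gap is in the single-particle bound. The functional $t\|\Xi_B\|^2=\int_0^t\int_0^t\eta(B(s)-B(r))\eta_0((s-r)/t)\,ds\,dr$ is \emph{quadratic} in the path, not additive, so your block-partition plus Markov/submultiplicativity scheme does not apply: after freezing $\eta_0$ on products of blocks the cross-block terms still couple different time windows, and each diagonal block still carries a quadratic self-interaction. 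There is no semigroup whose top eigenvalue reads off the ``nonlinear mean-field'' supremum; that supremum is precisely the thing to be proved. Chen's actual scheme (which you invoke) handles the quadratic functional by linearising $\|\Xi_B\|^2$ through a \emph{finite} $L^2$-cover of the set $\{\Xi_B\}$, reducing to linear functionals $\int_0^t\bar h(s/t,B(s))\,ds$; but his compactness comes from folding $B$ onto a torus, and folding only increases the functional when $\eta\ge0$. Here $K\in L^2(\RR^\ell)$ is merely symmetric, $\eta=K*K$ can change sign, and the folding inequality fails.

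The paper's remedy is the missing idea: a moment comparison with an Ornstein--Uhlenbeck process. Using the spectral representation of $\eta$ and the Donsker--Varadhan covariance inequality $\EE[B(s)\otimes B(r)]\ge\EE_\kappa[B(s)\otimes B(r)]$, one shows termwise that the Brownian expectation is dominated by the same expectation under the OU law $\PP_\kappa$. The OU process is confining, so on the high-probability event $\Omega_{t,N}=\{\frac1t\int_0^t|B(s)|\,ds\le N\}$ the family $\{\Xi_B\}$ is relatively compact in $L^2(\RR^{\ell+1})$ by Kolmogorov--Riesz. One then covers it by finitely many sets of the form $\{g:\|g\|^2<-\|h\|^2+2\langle g,h\rangle+\varepsilon\}$, linearises, applies the time-inhomogeneous Feynman--Kac upper bound of \cite{CHSX}*{Proposition 3.1} to each linear piece, and sends $N\to\infty$, $\kappa\downarrow0$, $\varepsilon\downarrow0$. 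Your direct Donsker--Varadhan alternative is blocked for the reason the paper flags explicitly: Brownian motion on $\RR^\ell$ is not ergodic and has no full occupation-measure LDP, and the time-dependence through $\eta_0$ rules out the usual workarounds.
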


Before giving the proof, let us explain our contribution.
This result, together with Theorem \ref{thm:expBB} below, extends the result of Chen in \cite{Chen}*{Section 4}, where $\eta$ is assumed to be nonnegative. In the aforementioned paper, the author uses a compact folding argument. When $\eta$ switches signs, this argument no longer works. In particular, \cite{Chen}*{inequality (4.15)} fails. Here, we replace the compact folding argument by a moment comparison between Brownian motions and Ornstein-Uhlenbeck processes, which was observed earlier by Donsker and Varadhan in  \cite{DV} (see \eqref{tmp.eek} below). Unlike Brownian motions, the Ornstein-Uhlenbeck processes are ergodic. This makes the essential arguments of \cite{Chen} carry through. Lastly, although the occupation times of Ornstein-Uhlenbeck processes satisfy (strong) large deviation principles, it cannot be applied here due to the time-dependent structure (namely $\eta_0$).
\begin{proof}[Proof of Proposition \ref{prop:gg}]
	We first observe that
	\begin{align*}
		&\sum_{1\le j\neq k\le n}\int_0^t\int_0^t \eta(B^j(s)-B^k(r))\eta_0(\frac{s-r}{t})dsdr
		\\&=\int_{\RR^{\ell+1}}\lt[\sum_{j=1}^n\int_0^t \psi(u-\frac st)K(x-B^j(s))ds \rt]^2dudx
		\\&\quad-\sum_{j=1}^n\int_{\RR^{\ell+1}}\lt[\int_0^t \psi(u-\frac st)K(x-B^j(s))ds \rt]^2dudx
		\\&\le (n-1)\sum_{j=1}^n\int_{\RR^{\ell+1}}\lt[\int_0^t \psi(u-\frac st)K(x-B^j(s))ds \rt]^2dudx\,.
	\end{align*}
	In conjunction with the independence of the Brownian motions, we see that the left-hand side of \eqref{tmp.54} is at most
	\begin{equation*}
		\limsup_{t\to\infty}\frac nt\log\EE\exp\lt\{\frac \theta t\int_{\RR^{\ell+1}}\lt[\int_0^t \psi(u-\frac st)K(x-B(s))ds \rt]^2dudx \rt\}\,.
	\end{equation*}
	Hence, it suffices to show
	\begin{equation}\label{tmp.55}
		\limsup_{t\to\infty}\frac 1t\log\EE\exp\lt\{\frac \theta t\int_{\RR^{\ell+1}}\lt[\int_0^t \psi(u-\frac st)K(x-B(s))ds \rt]^2dudx \rt\}\le  \cee(\theta\eta_0,\eta)\,.
	\end{equation}
	The proof is now divided into several steps.

	\medskip\noindent \textit{Step 1.} For each $\kappa>0$, let $\PP_\kappa$ be the law of an Ornstein-Uhlenbeck process in $\RR^\ell$ starting from 0 with generator $\frac12 \Delta- \kappa x\cdot \nabla$. Let $\EE_\kappa$ denote the expectation with respect to $\PP_\kappa$. We will show that
	\begin{multline}\label{tmp.eek}
		\EE\exp\lt\{\frac \theta t\int_{\RR^{\ell+1}}\lt[\int_0^t \psi(u-\frac st)K(x-B(s))ds \rt]^2dudx \rt\}
		\\\le \EE_ \kappa\exp\lt\{\frac \theta t\int_{\RR^{\ell+1}}\lt[\int_0^t \psi(u-\frac st)K(x-B(s))ds \rt]^2dudx \rt\}\,.
	\end{multline}
	We note that
	\begin{equation*}
		\int_{\RR^{\ell+1}}\lt[\int_0^t \psi(u-\frac st)K(x-B(s))ds \rt]^2dudx=\int_0^t\int_0^t \eta(B(s)-B(r))\eta_0(s-r)dsdr\,.
	\end{equation*}
	Hence, it suffices to check that for each integer $d\ge1$
	\begin{equation*}
		\EE\lt[\int_0^t\int_0^t \eta(B(s)-B(r))\eta_0(s-r)dsdr\rt]^d\le \EE_{\kappa}\lt[\int_0^t\int_0^t \eta(B(s)-B(r))\eta_0(s-r)dsdr\rt]^d\,.
	\end{equation*}
	Since $\eta_0$ is nonnegative, this amounts to show
	\begin{align}\label{tmp.56}
		\EE\lt[\prod_{j=1}^d \eta(B(s_j)-B(r_j)) \rt]
		\le \EE_\kappa\lt[\prod_{j=1}^d \eta(B(s_j)-B(r_j)) \rt]
	\end{align}
	for arbitrary times $s_1,r_1,\dots,s_d,r_d$ in $[0,t]$. 
	By writing $\eta(z)=(2 \pi)^{-\ell}\int_{\RR^\ell}e^{i \xi\cdot z}|\cff K(\xi)|^2 d \xi$, we see that
	\begin{align*}
		\EE\lt[\prod_{j=1}^d \eta(B(s_j)-B(r_j)) \rt]
		&=(2 \pi)^{-\ell d}\int_{\RR^{\ell d}}\EE e^{i\sum_{j=1}^d \xi_j\cdot (B(s_j)-B(r_j))}\prod_{j=1}^d|\cff K(\xi_j)|^2 d \xi_j
		\\&=(2 \pi)^{-\ell d}\int_{\RR^{\ell d}}e^{-\frac12\EE[ (\sum_{j=1}^d \xi_j\cdot (B(s_j)-B(r_j)))^2]}\prod_{j=1}^d|\cff K(\xi_j)|^2 d \xi_j\,.
	\end{align*}
	Hence, \eqref{tmp.56} is evident provided that
	\[
		\EE\lt[ \lt(\sum_{j=1}^d \xi_j\cdot (B(s_j)-B(r_j))\rt)^2\rt]\ge \EE_\kappa\lt[ \lt(\sum_{j=1}^d \xi_j\cdot (B(s_j)-B(r_j))\rt)^2\rt]\,.
	\]
	An observation made by Donsker-Varadhan \cite[proof of Lemma 3.10]{DV} is that $\EE[B(s)\otimes B(r)]\ge \EE_\kappa[B(s)\otimes B(r)]$ in quadratic sense. This fact implies the above inequality.

	\medskip\noindent \textit{Step 2.} As a consequence, \eqref{tmp.55} is reduced to showing
	\begin{equation}\label{tmp.57}
		\limsup_{\kappa\downarrow0}\limsup_{t\to\infty}\frac 1t\log\EE_\kappa\exp\lt\{\frac \theta t\int_{\RR^{\ell+1}}\lt[\int_0^t \psi(u-\frac st)K(x-B(s))ds \rt]^2dudx \rt\}\le  \cee(\theta\eta_0,\eta)\,.
	\end{equation}
	For each $t>0$ and each path $B$, we denote
	\begin{equation*}
		Z_{t,B}(u,x) =\frac1t\int_0^t \psi(u-\frac st)K(x-B(s))ds
	\end{equation*}
	and observe that
	\begin{align*}
		\int_{\RR^{\ell+1}}|Z_{t,B}(u,x)|^2dudx
		&=\frac 1{t^2}\int_{\RR^{\ell+1}}\lt[\int_0^t \psi(u-\frac st)K(x-B(s))ds \rt]^2dudx
		\\&=\frac1{t^2}\int_0^t\int_0^t \eta(B(s)-B(r))\eta_0\lt(\frac{s-r}t\rt)dsdr\,.
	\end{align*}
	In particular $Z_{t,B}$ belongs to $L^2(\RR^{\ell+1})$ and
	\begin{equation}\label{tmp.58}
		\|Z_{t,B}\|^2_{L^2(\RR^{\ell+1})}\le \eta(0)\eta_0(0)\,.
	\end{equation}
	Let $N$ be a fixed positive number and denote $\Omega_{t,N}=\{B:\frac1t\int_0^t|B(s)|ds\le N \}$. The only advantage of $\PP_\kappa$ over $\PP$, for which we need, is the following inequality
	\begin{equation}\label{tmp.Omc}
		\limsup_{t\to\infty}\frac1t\log\PP_\kappa(\Omega_{t,N}^c)\le -N+\frac1{2 \kappa^2}+\frac\ell2 \kappa  \,.
	\end{equation}
	In fact, by Girsanov's theorem we have
	\begin{align}
		\frac{d\PP_\kappa}{d\PP}\bigg|_{[0,t]}
		&=\exp\lt\{-\kappa\int_0^t B(s)\cdot dB(s)-\frac12 \kappa^2\int_0^t|B(s)|^2ds \rt\}
		\nonumber\label{tmp.Girsanov}\\&=\exp\lt\{-\frac12\kappa|B(t)|^2+\frac\ell2 \kappa t-\frac12 \kappa^2\int_0^t|B(s)|^2ds \rt\}\,.
	\end{align}
	It follows that
	\begin{align*}
		\EE_\kappa\left\{\exp\int_0^t|B(s)|ds\right\}
		&\le\EE\exp\lt\{\int_0^t\lt(|B(s)|-\frac12 \kappa^2|B(s)|^2\rt)ds+\frac\ell2 \kappa t \rt\}
		\\&\le\exp\lt\{\frac1{2 \kappa^2}t+\frac\ell2 \kappa t \rt\}
	\end{align*}
	where the last inequality is a consequence of a Cauchy-Schwarz inequality. Hence, in conjunction with Chebyshev's inequality, we obtain 
	\begin{align*}
		\PP_\kappa\lt(\frac1t\int_0^t|B(s)|ds>N \rt)\le e^{-Nt}\EE_\kappa e^{\int_0^t|B(s)|ds}\le e^{-Nt+\frac1{2 \kappa^2}t+\frac\ell2 \kappa t }\,.
	\end{align*}
	The estimate \eqref{tmp.Omc} is directly derived from here.

	The set $M=\{Z_{t,B}\}_{B\in \Omega_{t,N},t>0}$ is then a subset of $L^2(\RR^{\ell+1})$. We will show that $M$ is relatively compact in $L^2(\RR^{\ell+1})$. Indeed, we verify that $\cff M=\{\cff Z_{t,B}\}_{B\in \Omega_{t,N},t>0}$ satisfies the Kolmogorov-Riesz's compactness criterion in $L^2(\RR^{\ell+1})$ (cf. \cite[Theorem 5] {HO}). More precisely, we check that
	\begin{gather}
		\label{tmp.59} \sup_{B\in \Omega_{t,N},t>0} \|Z_{t,B}\|_{L^2(\RR^{\ell+1})}<\infty\,,
	 	\\\label{tmp.510}  \lim_{\rho \to\infty}\sup_{B\in \Omega_{t,N},t>0}\int_{|(\eta,\xi)|>\rho}|\cff Z_{t,B}(\eta,\xi)|^2d \eta d \xi=0\,,
	 	\\\label{tmp.511}\lim_{\rho\downarrow 0}\sup_{|(\tau',\xi')|<\rho} \sup_{B\in \Omega_{t,N},t>0}\int_{\RR^{\ell+1}}|\cff Z_{t,B}(\tau+\tau',\xi+\xi')-\cff Z_{t,B}(\tau,\xi)|^2d \tau d \xi=0 \,.
	\end{gather}
{Notice that}	\eqref{tmp.59} is evident from \eqref{tmp.58}. We can easily compute the Fourier transform of $Z_{t,B}$ 
	\begin{align*}
		\cff Z_{t,B}(\tau,\xi)=\cff \psi(\tau)\cff K(\xi) \frac1t\int_0^t e^{-i \tau\frac st-i \xi\cdot B(s)}ds\,.
	\end{align*}
	Hence,  
	\begin{align*}
		\sup_{B\in \Omega_{t,N},t>0}\int_{|(\tau,\xi)|>\rho}|\cff Z_{t,B}(\tau,\xi)|^2d \tau d \xi
		\le \int_{|(\tau,\xi)|>\rho}|\cff \psi(\tau)|^2|\cff K(\xi)|^2 d \tau d \xi\,,
	\end{align*}
	which implies \eqref{tmp.510}. To show \eqref{tmp.511}, let us first fix $\varepsilon>0$ and choose a function $g$ in $C_c^\infty(\RR^{\ell+1})$ such that $\|\cff \psi\otimes\cff K-g\|_{L^2(\RR^{\ell+1})}< \varepsilon$. We denote $Y_{t,B}(\tau,\xi)=g(\tau,\xi)\frac1t\int_0^t e^{-i \tau\frac st-i \xi\cdot B(s)}ds$ and observe that for every path $B $ in $\Omega_{t,N}$ and $|(\tau',\xi')|<\rho$, we have
	\begin{align*}
		&|Y_{t,B}(\tau+\tau',\xi+\xi')- Y_{t,B}(\tau,\xi)|
		\\&\le |g(\tau+\tau',\xi+\xi')-g(\tau,\xi)|
		+|g(\tau,\xi)|\lt|\frac1t\int_0^te^{-i \tau\frac st-i \xi\cdot B(s)}(e^{-i \tau'\frac st-i \xi'\cdot B(s)}-1)ds \rt|
		\\&\le |g(\tau+\tau',\xi+\xi')-g(\tau,\xi)|+2|g(\tau,\xi)| \lt(|\tau'|+ |\xi'|\frac1t\int_0^t|B(s)|ds\rt)\,.
		\\&\le |g(\tau+\tau',\xi+\xi')-g(\tau,\xi)|+2\rho(N+1)|g(\tau,\xi)|  \,.
	\end{align*}
	It follows that 
	\begin{align*}
		&\lim_{\rho\downarrow0}\sup_{|(\tau',\xi')|<\rho} \sup_{B\in \Omega_{t,N},t>0}\int_{\RR^{\ell+1}}|\cff Z_{t,B}(\tau+\tau',\xi+\xi')-\cff Z_{t,B}(\tau,\xi)|^2 d \tau d \xi
		\\&\le 4 \varepsilon^2+\lim_{\rho\downarrow0}\sup_{|(\tau',\xi')|<\rho} \sup_{B\in \Omega_{t,N},t>0}\int_{\RR^{\ell+1}}| Y_{t,B}(\tau+\tau',\xi+\xi')- Y_{t,B}(\tau,\xi)|^2 d \tau d \xi
		\\&\le 4 \varepsilon^2+\lim_{\rho\downarrow0}\sup_{|(\tau',\xi')|<\rho}\int_{\RR^{\ell+1}}| g(\tau+\tau',\xi+\xi')- g(\tau,\xi)|^2 d \tau d \xi \,.
	\end{align*}
	Since $g$ is uniformly continuous, the last limit above vanishes. Hence,
	\begin{align*}
		\lim_{\rho\downarrow 0}\sup_{|(\tau',\xi')|<\rho} \sup_{B\in \Omega_{t,N},t>0}\int_{\RR^{\ell+1}}|\cff Z_{t,B}(\tau+\tau',\xi+\xi')-\cff Z_{t,B}(\tau,\xi)|^2d \tau d \xi\le 4 \varepsilon^2
	\end{align*}
	for every $\varepsilon>0$. This in turn implies \eqref{tmp.511}.

	\medskip\noindent \textit{Step 3.} Applying \eqref{tmp.58},
	\begin{align*}
	 	\EE_\kappa e^{t\theta \|Z_{t,B}\|^2_{L(\RR^{\ell+1})}}
	 	&\le \EE_\kappa \lt[{\bf 1}_{\Omega_{t,N}} e^{t \theta \|Z_{t,B}\|^2_{L^2(\RR^{\ell+1})}}\rt]+\PP_\kappa(\Omega_{t,N}^c)e^{t \theta \eta(0)\eta_0(0)}\,.
	\end{align*}
	Together with \eqref{tmp.Omc}, the previous estimate yields
	\begin{multline}\label{20.512}
		\limsup_{t\to\infty}\frac1t\log\EE_\kappa e^{t\theta \|Z_{t,B}\|^2_{L(\RR^{\ell+1})}}\le (\theta \eta(0)\eta_0(0)-N+\frac1{2 \kappa^2}+\frac\ell2 \kappa )\vee
		\\\limsup_{t\to\infty}\frac1t\log\EE_\kappa \lt[{\bf 1}_{\Omega_{t,N}} e^{t \theta \|Z_{t,B}\|^2_{L^2(\RR^{\ell+1})}}\rt]\,.
	\end{multline}
	To deal with the limit on the right-hand side above, we adopt an argument from \cite{Chen}. Let $\varepsilon$ be a fixed positive number and define
	\begin{equation*}
		\mathcal O_h=\{g\in L^2(\RR^{\ell+1}):\|g\|^2<-\|h\|^2+2\langle g,h\rangle+\varepsilon \}\,.
	\end{equation*}
	The collection $\{\mathcal O_h\}_{h\in M}$ forms an open cover of $M$ in $L^2(\RR^{\ell+1})$. Since $M$ is relatively compact, we can find deterministic functions $h_1,\dots,h_m\in M$ such that $M\subset\cup_{j=1}^m \mathcal O_{h_j}$. It follows that for every $t>0$ and $B\in \Omega_{t,N}$,
	\begin{equation*}
		\|Z_{t,B}\|_{L^2(\RR^{\ell+1})}^2<\max_{j=1,\dots,m}\lt(-\|h_j\|^2_{L^2(\RR^{\ell+1})}+2 \langle h_j,Z_{t,B}\rangle_{L^2(\RR^{\ell+1})}+\varepsilon \rt)
	\end{equation*}
	and hence,
	\begin{multline}\label{20.513}
		\limsup_{t\to\infty}\frac1t\log\EE_\kappa \lt[{\bf 1}_{\Omega_{t,N}} e^{t \theta \|Z_{t,B}\|^2_{L^2(\RR^{\ell+1})}}\rt]
		\\\le \max_{j=1,\dots,m}\lt(-\theta\|h_j\|^2_{L^2(\RR^{\ell+1})}+\varepsilon \theta+ \limsup_{t\to\infty}\frac1t\log\EE_\kappa e^{2t \theta\langle h_j,Z_{t,B}\rangle_{L^2(\RR^{\ell+1})}}\rt)\,.
	\end{multline}
	We note that
	\begin{align*}
		\langle h_j,Z_{t,B}\rangle_{L^2(\RR^{\ell+1})}
		&=\frac1t\int_{\RR^{\ell+1}}h_j(u,x)\lt[\int_0^t \psi(u-\frac st)K(x-B(s))ds\rt]dudx
		\\&=\frac1t\int_0^t\bar h_j(\frac st,B(s))ds\,,
	\end{align*}
	where
	\begin{equation*}
		\bar h_j(s,z)=\int_{\RR^{\ell+1}}h_j(u,x) \psi(u-s)K(x-z)dudx=h_j*(\psi\otimes K)(s,z) \,.
	\end{equation*}
	Since $\bar h_j$ is the convolution of $L^2$-functions, it is continuous and bounded. Moreover, since $\psi'$ belongs to $L^2(\RR)$, $\partial_s\bar h_j$ exists and $\|\partial_s\bar h_j\|_{L^\infty}\le\|h_j\|_{L^2}\|\partial_s \psi\otimes K\|_{L^2}$. In particular, $\bar h_j$ satisfies the hypothesis of \cite[Proposition 3.1] {CHSX}.
	We also note that from \eqref{tmp.Girsanov}, $\frac{d\PP_k}{d\PP}\big|_{[0,t]}\le e^{\frac\ell2 \kappa t}$.
	In conjunction with \cite[Proposition 3.1] {CHSX}, it follows that
	\begin{align*}
		&\limsup_{t\to\infty}\frac1t\log\EE_{\kappa}e^{2t \theta\langle h_j,Z_{t,B}\rangle_{L^2(\RR^{\ell+1})}}
		\\&\le \frac\ell2 \kappa+\limsup_{t\to\infty}\frac1t\log\EE \exp\lt\{2\theta\int_0^t\bar h_j(\frac st,B(s))ds \rt\}
		\\&\le \frac\ell2 \kappa+\sup_{g\in\caa_\ell}\lt\{2 \theta\int_0^1\int_{\RR^{\ell}}\bar h_j(s,x)g^2(s,x)dxds-\frac12\int_0^1\int_{\RR^\ell}|\nabla_xg(s,x)|^2dxds \rt\}
		\\&= \frac\ell2 \kappa+\sup_{g\in\caa_\ell}\lt\{2 \theta\langle h_j,(\psi\otimes K)* g^2\rangle_{L^2(\RR^{\ell+1})} -\frac12\int_0^1\int_{\RR^\ell}|\nabla_xg(s,x)|^2dxds \rt\}\,,
	\end{align*}
	where each for $g\in\caa_\ell$ we conventionally set $g(s,x)=0$ if $s\notin[0,1]$. Gluing together our argument since \eqref{20.513}, we obtain
	\begin{align*}
		&\limsup_{t\to\infty}\frac1t\log\EE_\kappa\lt[{\bf 1}_{\Omega_{t,N}}e^{t \theta\|Z_{t,B}\|^2_{L^2(\RR^{\ell+1})}} \rt]
		\\&\le\max_{j=1,\dots,m}\sup_{g\in\caa_\ell}\lt\{-\theta\|h_j\|^2_{L^2(\RR^{\ell+1})}+2 \theta\langle h_j,(\psi\otimes K)*g^2\rangle_{L^2(\RR^{\ell+1})}-\frac12\int_0^1\int_{\RR^\ell}|\nabla_xg(s,x)|^2dxds \rt\}
		\\&\quad+ \varepsilon \theta+\frac\ell2 \kappa  	\,.
	\end{align*}  
	Applying the Cauchy-Schwarz inequality $-\|h_j\|^2_{L^2}+2 \langle h_j,(\psi\otimes K)*g^2\rangle_{L^2}\le\|(\psi\otimes K)*g^2\|^2_{L^2}$, we further have
	\begin{align*}
		&\limsup_{t\to\infty}\frac1t\log\EE_\kappa\lt[{\bf 1}_{\Omega_{t,N}}e^{t \theta\|Z_{t,B}\|^2_{L^2(\RR^{\ell+1})}} \rt]
		\\&\le \sup_{g\in\caa_\ell}\lt\{\theta\|(\psi\otimes K)*g^2\|^2_{L^2(\RR^{\ell+1})}-\frac12\int_0^1\int_{\RR^\ell}|\nabla_xg(s,x)|^2dxds \rt\}
		+\varepsilon \theta+\frac32 \kappa\ell  	\,.
	\end{align*}
	Together with \eqref{tmp.eek}, \eqref{20.512} and the fact that
	\begin{equation*}
		\|(\psi\otimes K)*g^2\|^2_{L^2(\RR^{\ell+1})}=\int_0^1\int_0^1\int_{\RR^\ell\times\RR^\ell}\eta(x-y)\eta_0(s-r)g^2(s,x)g^2(r,y)dxdydsdr
	\end{equation*}
	we see that the left-hand side of \eqref{tmp.55} is at most
	\begin{align*}
		\lt(\theta \eta(0)\eta_0(0)-N+\frac1{2 \kappa^2}+\frac\ell2 \kappa\rt)\vee \lt(\cee(\theta \eta_0,\eta)+\varepsilon \theta+\frac\ell2 \kappa\rt)\,.
	\end{align*}
	We now send $N\to\infty$, $\kappa\downarrow0$ and $\varepsilon\downarrow0$ to obtain \eqref{tmp.55} and complete the proof.
\end{proof}
The following result  provides an upper bound for the Lyapunov exponents of Brownian bridges.
\begin{theorem}\label{thm:expBB}
Suppose that  the covariance of the noise  satisfies condition \ref{H1} or \ref{H2} and also \ref{con:S}.  Assume that the spectral density $f(\xi)$ exists. 
Suppose that $\{B^j_{0,t} (s), s\in [0,t]\}$, $j=1\dots, n$, are independent $\ell$-dimensional Brownian bridges from zero to zero.
Then, 
	\begin{equation}\label{upper bd 2}  
		\limsup_{t \to \infty} t^{-a}\log \EE \exp \left \{ \sum_{0\le j< k\le n} \int_{[0,t]^2}\frac{\gamma(B_{0,t}^j(s)-B_{0,t}^k (r))}{|s-r|^{\alpha_0}} dr ds \right\} \le n \left(\frac{n-1}{2} \right)^{\frac{2}{2-\alpha}} \mathcal{E}(\alpha_0,\gamma)\,,
	\end{equation} 	
	where we recall that $a=\frac{4-\alpha - 2\alpha_0}{2-\alpha}$.
\end{theorem}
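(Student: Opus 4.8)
\emph{Proof plan.} The idea is to deduce \eqref{upper bd 2} from Proposition~\ref{prop:gg} combined with the density formula \eqref{id:density}, after a Brownian rescaling that produces the exponent $a$. Write $\Xi_t=\sum_{0\le j<k\le n}\int_{[0,t]^2}|s-r|^{-\alpha_0}\gamma(B^j_{0,t}(s)-B^k_{0,t}(r))\,dr\,ds$. Using $\{B^j_{0,t}(s)\}_{s\in[0,t]}\overset{d}{=}\{\sqrt t\,B^j_{0,1}(s/t)\}_{s\in[0,t]}$, the scaling $\gamma(\sqrt t\,x)=t^{-\alpha/2}\gamma(x)$ from \ref{con:S}, and the substitution $s=tu,\ r=tv$, one gets $\Xi_t\overset{d}{=}t^{2-\alpha_0-\alpha/2}\,\Xi_1$. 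Performing the same rescaling on the functional in Proposition~\ref{prop:gg}, with time horizon $T$, parameter $\theta$, the (idealized) choices $\eta=\gamma$, $\eta_0=|\cdot|^{-\alpha_0}$ and $n$ Brownian bridges, turns that functional into $\tfrac{2\theta}{n-1}\,T^{1-\alpha/2}\,\Xi_1$. Hence, choosing $\theta=\tfrac{n-1}{2}$ and $T=t^{a}$, the two functionals coincide in law exactly when $a(1-\tfrac\alpha2)=2-\alpha_0-\tfrac\alpha2$, that is $a=\tfrac{4-\alpha-2\alpha_0}{2-\alpha}$; and then $\tfrac1T\log(\cdot)=t^{-a}\log(\cdot)$, so Proposition~\ref{prop:gg} would give precisely the bound $n\,\mathcal{E}(\tfrac{n-1}{2}|\cdot|^{-\alpha_0},\gamma)=n\big(\tfrac{n-1}{2}\big)^{2/(2-\alpha)}\mathcal{E}(\alpha_0,\gamma)$, the last equality following from \eqref{id:Escale}.

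\emph{Admissible kernels.} Proposition~\ref{prop:gg} applies only to $\eta=K*K$ with $K\in L^2(\RR^\ell)$ symmetric and $\eta_0=\psi*\psi$ with $\psi$ symmetric, nonnegative and $\psi,\psi'\in L^2(\RR)$; neither $\gamma$ nor $|\cdot|^{-\alpha_0}$ qualifies. So I would work with approximants: $\eta=\eta_R$ with $\widehat{\eta_R}=\mathbf 1_{\{|\xi|\le R\}}f$ (legitimate since $f$ is locally integrable), and $\eta_0=\eta_0^{(m)}$ with $\widehat{\eta_0^{(m)}}(\tau)=c_{\alpha_0}|\tau|^{\alpha_0-1}e^{-\tau^2/m}$, which makes the corresponding $\psi$ a convolution of two nonnegative functions, hence nonnegative, with $\psi'\in L^2$. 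One has $\eta_R\le\gamma$ and $\eta_0^{(m)}\le|\cdot|^{-\alpha_0}$ in the quadratic sense, and the spectral measures increase to those of $\gamma$ and $|\cdot|^{-\alpha_0}$ as $R,m\to\infty$. Feeding these into Proposition~\ref{prop:gg} and into the rescaling of the first paragraph yields the bound $n\,\mathcal{E}(\tfrac{n-1}{2}\eta_0^{(m)},\eta_R)$, which by the comparison principle \eqref{cee.compare} is $\le n\,\mathcal{E}(\tfrac{n-1}{2}|\cdot|^{-\alpha_0},\gamma)$, the target right-hand side.

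\emph{From bridges to motions.} Since Proposition~\ref{prop:gg} is stated for Brownian motions, I would pass from bridges to motions using \eqref{id:density} with $x=y=0$: restricted to $[0,\lambda t]$, a bridge is absolutely continuous with respect to a Brownian motion with density $\le(1-\lambda)^{-\ell/2}$, hence $\le(1-\lambda)^{-n\ell/2}$ for the $n$ independent bridges together. Split the regularized exponent $\Xi_t^{(R)}$ (the analogue of $\Xi_t$ with $\gamma$ replaced by $\eta_R$) into its part over $[0,\lambda t]^2$ and the remaining ``corner'' part (at least one time variable in $[\lambda t,t]$), and apply H\"older's inequality with exponents $(p,p')$, $p\downarrow1$: the corner part is dominated deterministically by $\binom n2\|\eta_R\|_\infty\int_{[0,t]^2\setminus[0,\lambda t]^2}|s-r|^{-\alpha_0}\,dsdr=O(t^{2-\alpha_0})$, which is negligible after dividing $\log(\cdot)$ by $t^{a}$ because $2-\alpha_0<a$; and the main part is, by \eqref{id:density}, at most $(1-\lambda)^{-n\ell/2}$ times the Brownian-motion exponential moment over $[0,\lambda t]^2$ with kernel $p\eta_R$, to which Proposition~\ref{prop:gg} applies through the rescaling (with $T=(\lambda t)^a$ and $\gamma$ replaced by $p\eta_R$, using $\mathcal{E}(\cdot,p\eta_R)\le p^{2/(2-\alpha)}\mathcal{E}(\cdot,\gamma)$). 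Dividing by $t^{a}$, sending $t\to\infty$ and then $\lambda\uparrow1,\ p\downarrow1,\ m\to\infty,\ R\to\infty$ gives \eqref{upper bd 2}.

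\emph{Main obstacle.} The genuinely delicate step is interchanging $t\to\infty$ with the approximation limit $R\to\infty$ (and likewise $m$). Because $\gamma$ may change sign under \ref{H1}, there is no pathwise monotone comparison between the functionals built from $\gamma$ and from $\eta_R$; what survives is that, since the temporal kernel is nonnegative and the coupling constant equals $+1$, every Wiener-chaos coefficient of $\EE\exp\{\Xi_t^{(R)}\}$ is nonnegative and nondecreasing in $R$, so $\EE\exp\{\Xi_t\}=\sup_R\EE\exp\{\Xi_t^{(R)}\}$ (this is the mechanism behind Proposition~\ref{prop:expbridge}). Converting the per-$R$ bounds into a bound on this supremum requires the estimate of the previous paragraph to be uniform in $R$; one obtains this by noting that the Brownian rescaling maps the fixed cutoff $R$ to the diverging cutoff $R\sqrt t$—so the approximation sharpens on its own as $t\to\infty$—and that the only $\eta$-dependent error term in the proof of Proposition~\ref{prop:gg}, namely $\theta\,\eta(0)\eta_0(0)$, appears inside a maximum against $-N$ and is erased on letting $N\to\infty$; a similar (more routine) bookkeeping handles the corner term and the parameter $m$. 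This uniformity, together with the identifications of the first paragraph, completes the argument.
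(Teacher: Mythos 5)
Your outline correctly identifies the three main ingredients the paper uses: the Brownian rescaling that produces the exponent $a$, the passage from bridges to motions on $[0,\lambda t]$ via \eqref{id:density}, and the application of Proposition~\ref{prop:gg} followed by the comparison principle \eqref{cee.compare}. The scaling bookkeeping (including the observation that $\theta=\tfrac{n-1}{2}$ together with \eqref{id:Escale} produces the constant $(\tfrac{n-1}{2})^{2/(2-\alpha)}$) is accurate, as is the choice of admissible approximating kernels $\eta_R=K_R*K_R$ and $\eta_0^{(m)}=\psi_m*\psi_m$.

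The gap, which you yourself flag as the ``main obstacle,'' is real and is not fixed by the argument you sketch. You approximate \emph{from below} in the quadratic sense ($\eta_R\le\gamma$, $\eta_0^{(m)}\le|\cdot|^{-\alpha_0}$), prove the upper bound for each fixed $(R,m)$, and then appeal to $\EE\exp\{\Xi_t\}=\sup_{R,m}\EE\exp\{\Xi_t^{(R,m)}\}$. Even granting that identity, having
\[
\limsup_{t\to\infty}t^{-a}\log\EE\exp\{\Xi_t^{(R,m)}\}\le C\qquad\text{for every fixed } (R,m)
\]
does \emph{not} yield $\limsup_{t\to\infty}t^{-a}\log\sup_{R,m}\EE\exp\{\Xi_t^{(R,m)}\}\le C$: the two operations $\limsup_t$ and $\sup_{R,m}$ do not commute without a uniform rate. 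Your proposed remedy --- that the rescaling turns the fixed cutoff $R$ into a diverging one $R\sqrt t$ --- does not help, because Proposition~\ref{prop:gg} is stated for a \emph{fixed} kernel $\eta$; with a $t$-dependent kernel $\eta_{R\sqrt t}$ the proposition simply does not apply, and a $t$-uniform version would have to be proved from scratch (the error term $\theta\,\eta_R(0)\eta_0(0)$ in \eqref{20.512} also grows without bound in $R$ when $\gamma$ is distribution-valued, which is precisely the situation under \ref{H1}).

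The paper circumvents this entirely by never approximating from below. Instead, it applies H\"older's inequality to split the exponent
$Q_t(\gamma_0,\gamma)=Q_t(\gamma_{0,\delta},\gamma_\varepsilon)+Q_t(\gamma_0,\gamma-\gamma_\varepsilon)+Q_t(\gamma_0-\gamma_{0,\delta},\gamma_\varepsilon)$
and obtains a genuine pointwise-in-$t$ upper bound (for each fixed $p>1$, $\varepsilon>0$, $\delta>0$) in \eqref{65}. The two error factors are then shown to be negligible after $t\to\infty$: the spatial error $Q_t(\gamma_0,\gamma-\gamma_\varepsilon)$ via a moment computation that reduces to Lemma 5.3 of \cite{HLN} (this is \eqref{ecua11}--\eqref{ecua12}), and the temporal error $Q_t(\gamma_0-\gamma_{0,\delta},\gamma_\varepsilon)$ via a deterministic $L^1$ bound \eqref{ecua22}. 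Crucially, the main term's asymptotic bound $\cee(\tfrac{p}{2}\gamma_{0,\delta},\gamma_\varepsilon)$ is \emph{uniformly} dominated by $\cee(\tfrac{p}{2}\gamma_0,\gamma)$ by the comparison principle \eqref{cee.compare}, so $\varepsilon\downarrow0$ and $\delta\downarrow0$ may be taken after $t\to\infty$ at no cost. If you replace the monotone-limit step in your writeup by this H\"older decomposition (and prove the analogue of \eqref{ecua11} for the Fourier-cutoff error $\gamma-\eta_R$, or simply switch to the Gaussian-damped approximant $\gamma_\varepsilon$ as the paper does), the rest of your argument goes through.
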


\begin{proof}
For suitable distributions $\eta_0,\eta$, we are going  to make use of the notation
	\[
Q_t(\eta_0,\eta):= \sum_{0\le j<k \le n}  \int_{[0,t]^2} \eta(B_{0,t}^j(s)-B_{0,t}^k (r))\eta_0(\frac{s-r}t)drds\,.
	\]
Let $\gamma_0(s)=|s|^{-\alpha_0}$ denote the temporal covariance. 
With these notation, the expectation in \eqref{upper bd 2} can be written as $\EE\exp\lt\{t^{-\alpha_0}Q_t(\gamma_0,\gamma)  \rt\}$.
We note that $\gamma_0=\psi*\psi$ where $\psi(s)=c(\alpha_0) |s|^{-\frac{1+\alpha_0}2} $ with some suitable constant $c(\alpha_0)$. For each $\delta>0$ we set $\psi_\delta=p_{\delta/2}* \psi$ and $\gamma_{0,\delta}=\psi_\delta*\psi_\delta$. To prove \eqref{upper bd 2}, the main ideas are first approximate the singular covariances $\gamma_0,\gamma$  by regular covariances $\gamma_{0,\delta},\gamma_\varepsilon$; then upper bound the exponential functional of Brownian bridges by that of Brownian motions. At the final stage, we will apply Proposition \ref{prop:gg}.
This procedure will be carried out in detail in several steps below. For the moment, let us put $t_n=(n-1)^{\frac2{2- \alpha}}t^a$ and observe that by making change of variables $s\to\frac t{t_n}s$, $r\to\frac t{t_n}r$ and using the scaling properties of $\gamma$ and of Brownian bridges (i.e. \ref{con:S} and $\{B_{0,t}(\lambda s),s\le t/ \lambda \}\stackrel{d}{=} \sqrt{\lambda} \{B_{0,\frac{t}{\lambda}}(s),s\le t/\lambda\} $ for any $\lambda>0$) we have
\begin{equation}\label{eq:scaling}
	\EE \exp \left\{t^{-\alpha_0} Q_t(\gamma_0, \gamma )\right\}= \EE \exp \left\{  \frac {1} {(n-1)t_n} Q_{t_n} (\gamma_0,\gamma) \right\}.
\end{equation}
Therefore, in conjunction with \eqref{id:Escale}, \eqref{upper bd 2} is equivalent to
\begin{equation}\label{tmp.eqt}
	\limsup_{t\to\infty}\frac1t\log \EE \exp \left\{  \frac {1} {(n-1)t} Q_{t} (\gamma_0,\gamma) \right\}\le n \cee(\frac12\gamma_0,\gamma)\,.
\end{equation}

	\medskip
	\noindent
	{\it Step 1.}  
Fix $\ep>0$. 	      For any $p,q>1$, $\frac 1p + \frac 2q =1$, applying H\"older inequality, we have
\begin{align}  \label{65}
		\log \EE e^{\frac{1}{(n-1)t} Q_t(\gamma_0, \gamma)}  
		&\le \frac1p\log \EE e^{ \frac p{(n-1)t}Q_t\left( \gamma_{0,\delta},\gamma_\ep \right)} 
		\\\notag&\quad+\frac1q\log \EE e^{\frac q{(n-1)t}Q_t \left( \gamma_0,\gamma -\gamma_\ep  \right)}
		+\frac1q\log \EE e^{ \frac q{(n-1)t}Q_t \left( \gamma_0- \gamma_{0,\delta},\gamma_\ep  \right)}\,.
\end{align}
We claim that
 \begin{equation} \label{ecua11}
	\limsup_{\varepsilon \downarrow 0} \limsup_{t\rightarrow \infty} \frac 1t \log \EE e^{\frac q{(n-1)t}Q_t \left( \gamma_0,\gamma -\gamma_\ep  \right)} \le 0
 \end{equation}
 and
 \begin{equation}\label{ecua22}
 	{\limsup_{\varepsilon \downarrow 0}}\limsup_{\delta\downarrow0}\limsup_{t\to\infty}\frac1t\log \EE e^{ \frac q{(n-1)t}Q_t \left( \gamma_0- \gamma_{0,\delta},\gamma_\ep  \right)}\le 0\,.
 \end{equation}
 Let us focus on \eqref{ecua11}. By H\"older's inequality, it suffices to show  that for any $\kappa \in \RR$
 \begin{equation} \label{ecua12}
 {\limsup_{\varepsilon \downarrow 0}}  \limsup_{t\rightarrow \infty}  \frac 1t	  \log \EE \exp \left \{  \kappa t^{\alpha_{0}-1}   \int_{[0,t]^2}\frac{(\gamma-\gamma_\varepsilon)(B_{0,t}^1(s)-B_{0,t}^2 (r))}{|s-r|^{\alpha_0}} dr ds \right\}  \le 0.
 \end{equation}
 For each integer $d\ge1$, we can write
 \begin{eqnarray*}
&& \EE   \left [\int_{[0,t]^2}\frac{(\gamma-\gamma_\varepsilon)(B_{0,t}^1(s)-B_{0,t}^2 (r))}{|s-r|^{\alpha_0}} dr ds \right]^d\\
&& \quad  = \frac 1 {(2\pi)^{\ell d}}\int_{[0,t]^{2d}  }  \int_{(\RR^\ell)^d} \EE e^{ i \sum_{j=1}^d  \xi^j \cdot (B^1_{0,t} (s_j)- B^2_{0,t}(r_j)}  \prod_{j=1}^d  |s_j-r_j|^{-\alpha_0}  (1- e^{-\varepsilon |\xi^j|^2})  \mu(d\xi) drds.
 \end{eqnarray*}
 Then, using Cauchy-Schwarz inequality and the inequality $ab \le \frac 12(a^2+b^2)$, we obtain
  \begin{eqnarray*}
&& \EE   \left [\kappa t^{\alpha_0-1} \int_{[0,t]^2}\frac{(\gamma-\gamma_\varepsilon)(B_{0,t}^1(s)-B_{0,t}^2 (r))}{|s-r|^{\alpha_0}} dr ds \right]^d
  \\
  && \quad \le C^d \int_{[0,t]^{d}  }  \int_{(\RR^\ell)^d}  \left| \EE e^{ i \sum_{j=1}^d  \xi^j \cdot B^1_{0,t} (s_j)} \right|^2   \prod_{j=1}^d  (1- e^{-\varepsilon |\xi^j|^2})  \mu(d\xi) ds\\
  && \quad  = \EE  \left [C \int_0^t(\gamma-\gamma_\varepsilon)(B_{0,t}^1(s) -B^2_{0,t}(s))ds \right]^d,
  \end{eqnarray*}
  for some constant $C$ depending only on $\kappa$. Therefore, the claim (\ref{ecua12}) is reduced to
\[
 \lim_{\varepsilon \rightarrow 0}  \limsup_{t\rightarrow \infty}  \frac 1t	  \log \EE \exp \left \{ C    \int_0^t (\gamma-\gamma_\varepsilon)(B_{0,t}^1(s)-B^2_{0,t} (s))  ds \right\}  \le 0,
\]
 which follows from Lemma 5.3 in \cite{HLN}.   This completes the proof of (\ref{ecua11}).

 To show \eqref{ecua22}, we use the estimate
 \begin{align*}
 	Q_t(\gamma_0- \gamma_{0,\delta},\gamma_\varepsilon)
 	\le\frac{n(n-1)t^2}2\|\gamma_\varepsilon\|_{L^\infty(\RR^\ell)} \|\gamma_{0}-\gamma_{0,\delta}\|_{L^1([0,1])}
 \end{align*}
 to obtain
 \begin{equation*}
 	\frac1t\log\EE e^{\frac q{(n-1)t}Q_t(\gamma_0- \gamma_{0,\delta},\gamma_\varepsilon)}\le \frac n2\|\gamma_\varepsilon\|_{L^\infty(\RR^\ell)} \|\gamma_{0}-\gamma_{0,\delta}\|_{L^1([0,1])}\,.
 \end{equation*}
 This implies \eqref{ecua22} since $\gamma_0\in L^1([0,1])$ and $\lim_{\delta\downarrow0}\gamma_{0,\delta}=\gamma_0$ in $L^1([0,1])$.

	\medskip
	\noindent
	{\it Step 2.}  We claim that
	\begin{equation}  \label{w1}
	\limsup_{t\to\infty}\frac1t \log   \EE \exp \left \{  \frac p {(n-1)t} Q_{t}\left(\gamma_{0,\delta}, \gamma_\varepsilon \right)\right\} \le n  \mathcal{E}(\frac p2\gamma_{0,\delta},\gamma_\varepsilon).
	\end{equation}
Notice that the function $\gamma_\varepsilon$ is bounded and can be expressed in the form
 $\gamma_\varepsilon=K_\varepsilon * K_\varepsilon$, where    the function $K_\varepsilon$, defined by
\begin{equation}
K_{\varepsilon}(x)=\frac{1}{(2\pi)^{\ell}} \int_{\RR^{\ell}} e^{i\xi\cdot x-\frac{\varepsilon}{2}|\xi|^2} \sqrt{f(\xi)}d\xi,
\end{equation}	     
 is bounded with bounded first derivatives, symmetric and $K_\varepsilon \in L^2 (\RR^\ell)$.
	Let $\lambda$ be a fixed number in $(0,1)$ and set {$\rho_{\lambda}=\int_{[0,1]^2\setminus[0,\lambda]^2} \gamma_{0,\delta}(s-r)dsdr$}. For each $j\neq k$, we use the estimate
	\begin{align*}
		&\int_0^t\int_0^t \gamma_\varepsilon(B^j_{0,t}(s)-B^k_{0,t}(r))\gamma_{0,\delta}(\frac{s-r}t)dsdr
		\\&\le \int_0^{\lambda t}\int_0^{\lambda t}\gamma_\varepsilon(B^j_{0,t}(s)-B^k_{0,t}(r))\gamma_{0,\delta}(\frac{s-r}t)dsdr
		+\|\gamma_\varepsilon\|_\infty \rho_\lambda t^2
	\end{align*} 
	together with \eqref{id:density} to obtain
	\begin{align*}
		&\EE e^{\frac{p}{(n-1)t}Q_t(\gamma_{0,\delta},\gamma_\varepsilon)}
		\\&\le e^{\frac n2 p\|\gamma_\varepsilon\|_\infty \rho_\lambda t} 
		\EE\exp\left\{\frac{p}{(n-1)t}\sum_{0\le j<k \le n}\int_{[0,\lambda t]^2}\gamma_\varepsilon({B_{0,t}^j(s)-B_{0,t}^k(r)})\gamma_{0,\delta}(\frac{s-r}t)drds \right\}
		\\&\le \frac{e^{\frac n2 p\|\gamma_\varepsilon\|_\infty \rho_\lambda t}}{(1- \lambda)^{\ell/2}} 
		\EE\exp\left\{\frac{p}{(n-1)t}\sum_{0\le j<k \le n}\int_{[0,\lambda t]^2}\gamma_\varepsilon(B^j(s)-B^k(r))\gamma_{0,\delta}(\frac{s-r}t)drds \right\}\,.
	\end{align*}
	At this point, we apply Proposition \ref{prop:gg} to get
	\begin{align*}
		\limsup_{t\to\infty}\frac1t \log  \EE e^{\frac{p}{(n-1)t}Q_t(\gamma_{0,\delta},\gamma_\varepsilon)}
		\le \frac n2 p\|\gamma_\varepsilon\|_\infty \rho_\lambda 
		+\lambda n\cee (\frac {p \lambda}2 \gamma_{0,\delta},\gamma_ \varepsilon).
	\end{align*}
	Passing through the limit $\lambda\uparrow1$, noting that $\rho_\lambda\to0$, we obtain \eqref{w1}.

	\medskip\noindent \textit{Step 3.} We combine \eqref{65}, \eqref{ecua11}, \eqref{ecua22} and \eqref{w1} to get
	\begin{align*}
		\limsup_{t\to\infty}\frac1t\log\EE e^{\frac{1}{(n-1)t}Q_t(\gamma_0,\gamma)}
		\le\limsup_{\varepsilon\downarrow0}\limsup_{\delta\downarrow0} \frac np\cee(\frac p2 \gamma_{0,\delta},\gamma_\varepsilon)
	\end{align*}
	Note that the order of the limits $\delta\downarrow0$ and $\varepsilon\downarrow0$ can not be interchanged. It is evident to check that $\gamma_{0,\delta}\le \gamma_0$ and $\gamma_\varepsilon\le \gamma$ in quadratic sense. Hence, using \eqref{cee.compare} we have
	\begin{equation*}
		\limsup_{t\to\infty}\frac1t\log\EE e^{\frac{1}{(n-1)t}Q_t(\gamma_0,\gamma)}
		\le\frac np\cee(\frac p2 \gamma_{0},\gamma)\,.
	\end{equation*}
	Finally, letting $p\downarrow 1$, we obtain \eqref{tmp.eqt}, which completes the proof.
	\end{proof}
\begin{remark}
	The case of time-independent noises corresponds to  $\alpha_0=0$. In this case, the function $\gamma_0\equiv 1$ can not be written as a convolution of a function with itself. Thus the proof of Proposition \ref{prop:gg} does not work in this case. 
\end{remark}  

\begin{corollary}\label{cor:u}
Suppose that  the covariance of the noise  satisfies \ref{H1} or \ref{H2},  condition \ref{con:S} holds and the spectral measure $\mu$ is absolutely continuous.
Let $u(t,x)$ be the solution to \eqref{SHE} with nonnegative initial condition $u_0$ satisfying condition \eqref{con:u0}. Then for any integer $n \geq 2$, 
\begin{equation}\label{upper bd}
\limsup_{t \to \infty} t^{-\frac{4-\alpha-2\alpha_0}{2-\alpha}} \log \sup_{(x^1, \dots, x^n) \in (\RR^\ell)^n} \frac{\EE \left[ \prod_{j=1}^n u(t,x^j)\right]}{ \prod_{j=1}^n p_t* u_0(x^j)} \leq n \left(\frac{n-1}{2}\right)^{\frac{2}{2-\alpha}}\mathcal{E}(\alpha_0,\gamma)\,.
\end{equation}
\end{corollary}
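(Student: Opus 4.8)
The plan is to combine the Feynman--Kac formula for the moments from Proposition~\ref{prop1} with the Lyapunov-exponent estimate of Theorem~\ref{thm:expBB}. Starting from \eqref{eqn:FKbridge}, for nonnegative $u_0$ we may bound the exponential factor using the pointwise-in-$y$ inequality \eqref{est:GGy} (applied with $G^j_s=B^j_{0,t}(s)$, $\kappa=1$ and the deterministic shift $y^{jk}_{s,r}=\frac st y^j-\frac rt y^k$), which eliminates the dependence on the integration variables $y^j$ inside the expectation. The remaining $\prod_j[u_0(x^j+y^j)p_t(y^j)]\,dy^j$ integrates to $\prod_j p_t*u_0(x^j)$, giving the clean bound
\[
\frac{\EE\left[\prod_{j=1}^n u(t,x^j)\right]}{\prod_{j=1}^n p_t*u_0(x^j)}\le \EE\exp\left\{\sum_{1\le j<k\le n}\int_{[0,t]^2}\frac{\gamma(B^j_{0,t}(s)-B^k_{0,t}(r))}{|s-r|^{\alpha_0}}\,dr\,ds\right\},
\]
uniformly in $(x^1,\dots,x^n)$. (Here I use that under \ref{con:S} we have $\gamma_0(s-r)=|s-r|^{-\alpha_0}$.) A small point is that \eqref{est:GGy} is stated for the regularized $\gamma_\varepsilon$; one either applies it at the level of $u_\varepsilon$ as in the proof of Proposition~\ref{prop1} and passes to the limit $\varepsilon\downarrow0$ using Proposition~\ref{prop:expbridge}, or invokes the $L^p$-convergence noted in the Remark after Proposition~\ref{prop:expbridge} to transfer the inequality to the limit. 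Since the right-hand side above no longer depends on $x$, taking $\sup_{(x^1,\dots,x^n)}$ is harmless.

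The final step is to apply Theorem~\ref{thm:expBB} directly to the right-hand side: it gives
\[
\limsup_{t\to\infty}t^{-a}\log\EE\exp\left\{\sum_{1\le j<k\le n}\int_{[0,t]^2}\frac{\gamma(B^j_{0,t}(s)-B^k_{0,t}(r))}{|s-r|^{\alpha_0}}\,dr\,ds\right\}\le n\left(\frac{n-1}{2}\right)^{\frac{2}{2-\alpha}}\mathcal{E}(\alpha_0,\gamma),
\]
with $a=\frac{4-\alpha-2\alpha_0}{2-\alpha}$, which is exactly the exponent $\frac{4-\alpha-2\alpha_0}{2-\alpha}$ appearing in \eqref{upper bd}. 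Combining the two displays yields \eqref{upper bd}. The hypotheses match up: \ref{H1} or \ref{H2} plus \ref{con:S} are assumed, and the absolute continuity of $\mu$ guarantees the spectral density $f$ exists, which is what Theorem~\ref{thm:expBB} needs.

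I expect the only genuine subtlety to be the justification of the uniform (in $x$) bound via \eqref{est:GGy} together with the $\varepsilon\downarrow0$ passage, i.e.\ making sure the comparison inequality survives the removal of the spatial regularization and the interchange with $\sup_x$; everything after that is a direct citation of Theorem~\ref{thm:expBB}. One should also remark that the supremum can be pulled outside the $\limsup$ precisely because, after the reduction, the quantity being estimated is independent of $(x^1,\dots,x^n)$, so no uniformity issue in $t$ arises.
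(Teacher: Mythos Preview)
Your proposal is correct and follows essentially the same route as the paper's own proof: apply the Feynman--Kac formula \eqref{eqn:FKbridge}, use the shift inequality \eqref{est:GGy} to strip off the $(x,y)$-dependence from the exponential (the deterministic shift should be $y^{jk}_{s,r}=x^j-x^k+\frac st y^j-\frac rt y^k$, not just the $y$-part, but this changes nothing), and then invoke Theorem~\ref{thm:expBB}. Your remarks on the $\varepsilon\downarrow0$ passage are in fact more careful than the paper, which simply cites \eqref{est:GGy} directly for $\gamma$.
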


\begin{proof}
Let $\{B_{0,t}^j(s), s \in [0,t], j=1,\dots, n\}$, be $\ell$-dimensional Brownian bridges from zero to zero. Using the moment formula for the solution (\ref{eqn:FKbridge})  proved in Proposition \ref{prop1}, we have
\begin{eqnarray*}
\EE \left[ \prod_{j=1}^n u(t,x^j) \right]&=&\EE \bigg(\int_{(\RR^{\ell})^n}\prod_{j=1}^n u_0(x^j + y^j) p_t(y^j)\\
&&\times \exp \left\{\frac 12 \sum_{j\neq k}^n \int_{[0,t]^2}\frac{\gamma(B_{0,t}^j(s)-B_{0,t}^k(r)+\frac{s}{t}y^j-\frac{r}{t}y^k +x^j -x^k)}{|s-r|^{\alpha_0}} dr ds \right\} dy\bigg)\\
&\leq&\prod_{j=1}^n  p_t * u_0(x^j)\EE  \exp \left\{\frac 12 \sum_{j\neq k}^n \int_0^t \int_0^t \frac{\gamma(B_{0,t}^j(s)-B_{0,t}^k(r))}{|s-r|^{\alpha_0}} dr ds \right\}\,,
\end{eqnarray*}
where the last inequality follows from  (\ref{est:GGy}).
Then, the upper bound is a consequence of Theorem \ref{thm:expBB}.
\end{proof}

\begin{remark} Using the approach developed in  \cite{Chen}, we can also show the corresponding lower bound in (\ref{upper bd 2}), assuming  \ref{H1} or \ref{H2} and \ref{con:S}  (but not necessarily the absolute continuity of $\mu$).
However, a lower bound similar to that proved in Corollary  \ref{cor:u} cannot be obtained. For this reason, the proof of a lower bound for the exponential growth indices needs a direct approach as it is done in the next section. 

\end{remark}

\section{Exponential growth indices}\label{sec: Exp Grow indices}

In this section we 
denote by $u(t,x)$ the solution to \eqref{SHE} with nonnegative initial condition $u_0$ satisfying condition \eqref{con:u0}. 
The exponential growth indices are  defined as follows:
	\begin{equation}  \label{expg1}
		\lambda_*(n)=\sup\left\{\lambda>0:\liminf_{t\to\infty} t^{-a} \sup_{|x|\ge \lambda t^{ \frac {a+1}2} }\log\EE u^n(t,x)>0\right \}\,
\end{equation}
and
\begin{equation} \label{expg2}
		\lambda^*(n)=\inf\left\{\lambda>0:\limsup_{t\to\infty} t^{-a}  \sup_{|x|\ge \lambda t^{ \frac {a+1}2} }\log\EE  u^n(t,x)<0\right \}\,,
\end{equation}
where we recall that $a=\frac {4-\alpha -2\alpha_0}{2-\alpha}$.

\subsection{Upper bound for $\lambda^*(n)$}
Set
\begin{equation} \label{b}
b=\frac{2a}{a+1}=\frac{4- \alpha-2 \alpha_0}{3- \alpha- \alpha_0}.
\end{equation}
It may be helpful to note that $a,b\in(1,2)$. For each positive number $\beta$, we define two auxiliary functions $\psi_ \beta$ and $\phi_ \beta$. The function $\psi_ \beta:(0,\infty)\to(0,\infty)$ is defined by
\begin{equation}
	\psi_ \beta(w)=\frac12 \beta^2 b^2 w^{2b-2}+ \beta w^{b}
\end{equation}
and $\phi_ \beta:(0,\infty)\to(0,\infty)$ is uniquely defined by the relation
\begin{equation}\label{eqn:phi}
	\beta b(\phi_ \beta(x))^{b-1}=x- \phi_ \beta(x)\,,\quad\forall x>0\,.
\end{equation}
For every fixed $x>0$, $\phi_ \beta(x)$ can be recognized as the unique minimizer of the function 
\begin{equation}\label{eqn:f}
 	y\mapsto f_{\beta,x}(y):=\frac12(y-x)^2+ \beta y^b
\end{equation} 
on $(0,\infty)$. Together with \eqref{eqn:phi}, it follows that
\begin{equation}\label{min.f}
 	f_{\beta,x}(y)\ge \psi_ \beta(\phi_ \beta(x))
\end{equation}
for every $\beta,x>0$. Relation \eqref{eqn:phi} implies that $\phi_ \beta$ is strictly increasing. 

\begin{theorem}   \label{teo1}
	Assume conditions \ref{H1} or \ref{H2}, condition \ref{con:S}  and the absolute continuity of $\mu$.  
Suppose that   $u_0$ satisfies
	\begin{equation}
	 	\int_{\RR^\ell} e^{\beta|y|^b}u_0(y)dy<\infty
	\end{equation} 
	for some $\beta>0$, then
	\[
\lambda^*(n) \le    g_\beta^{-1} \left(   \left(  \frac {n-1} 2 \right)^{ \frac 2 {2-\alpha}}   \mathcal{E}(\alpha_0,\gamma)\right),
\]
where the function  $g_\beta(\lambda)=\psi_ \beta(\phi_ \beta(\lambda))$ is  given by
\begin{equation} \label{gbeta}
g_\beta(\lambda)= \frac 12 \beta^2 b^2 \phi_\beta(\lambda)^{2b-2} + \beta \phi_\beta(\lambda)^b,
\end{equation}
and $\phi_\beta$ is
characterized by \eqref{eqn:phi}.
	\end{theorem}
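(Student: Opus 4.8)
The plan is to bound the $n$th moment $\EE[u^n(t,x)]$ for $|x|\ge \lambda t^{(a+1)/2}$ using the Feynman-Kac formula of Corollary \ref{cor1}, and then optimize a Gaussian-type tail estimate against the moment growth bound from Corollary \ref{cor:u}. First I would apply the moment formula \eqref{k34} with independent Brownian motions $B^1,\dots,B^n$ and split the initial data factor $\prod_j u_0(B^j(t)+x)$ from the exponential functional via H\"older's inequality: for conjugate exponents $p,q$ with $\frac1p+\frac1q=1$, write
\[
\EE[u^n(t,x)]\le \left(\EE\Big[\prod_{j=1}^n u_0(B^j(t)+x)^p\Big]\right)^{1/p}\left(\EE\exp\Big\{q\sum_{j<k}\int_{[0,t]^2}\frac{\gamma(B^j(s)-B^k(r))}{|s-r|^{\alpha_0}}drds\Big\}\right)^{1/q}.
\]
The second factor, after rescaling time as in \eqref{eq:scaling} and using \eqref{id:Escale}, is controlled by Corollary \ref{cor:u} (or Theorem \ref{thm:expBB}): its exponential rate in $t^a$ is at most $\frac nq\big(\frac{q(n-1)}{2}\big)^{2/(2-\alpha)}\mathcal E(\alpha_0,\gamma)$ up to taking $q\downarrow1$.

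The core of the argument is the first factor. Using $p_t*u_0(x)=\EE[u_0(B(t)+x)]$ and the hypothesis $\int e^{\beta|y|^b}u_0(y)dy<\infty$, I would estimate, via a change of variables and the Gaussian density, that $\EE[u_0(B^j(t)+x)^p]$ decays like $\exp\{-p\,c\,|x|^b/t^{\,?}\}$ for large $|x|$; more precisely the competition is between the Gaussian cost $\frac{|x-y|^2}{2t}$ of the Brownian motion reaching near $-x$ and the growth allowance $\beta|y|^b$ of $u_0$, which is exactly the variational problem $f_{\beta,x}(y)=\frac12(y-x)^2+\beta y^b$ minimized in \eqref{eqn:f}, giving the bound $\psi_\beta(\phi_\beta(\cdot))=g_\beta(\cdot)$ in \eqref{min.f}. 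The scaling that makes $t^{-a}$ the right normalization forces $|x|$ to be of order $t^{(a+1)/2}$, and with $|x|=\lambda t^{(a+1)/2}$ and $b=2a/(a+1)$ one gets $|x|^b/t = \lambda^b t^{a-1}\cdot t = \lambda^b t^{a}$... I need to be careful here: the correct bookkeeping is that $\frac{|x|^2}{t}$ and $|x|^b$ must both scale as $t^a$ when $|x|\sim t^{(a+1)/2}$, which is precisely why $b=\frac{2a}{a+1}$ is chosen, so $\limsup_{t\to\infty} t^{-a}\frac1p\log\EE[u_0(B^j(t)+x)^p]^{1/p}\le -g_\beta(\lambda)$ uniformly over $|x|\ge\lambda t^{(a+1)/2}$ (using monotonicity of $g_\beta=\psi_\beta\circ\phi_\beta$).

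Combining the two factors, for $|x|\ge\lambda t^{(a+1)/2}$,
\[
\limsup_{t\to\infty}t^{-a}\sup_{|x|\ge\lambda t^{(a+1)/2}}\log\EE[u^n(t,x)]\le -n\,g_\beta(\lambda)+n\Big(\tfrac{n-1}{2}\Big)^{2/(2-\alpha)}\mathcal E(\alpha_0,\gamma),
\]
after sending $q\downarrow1$. This right-hand side is negative precisely when $g_\beta(\lambda)>\big(\frac{n-1}{2}\big)^{2/(2-\alpha)}\mathcal E(\alpha_0,\gamma)$, i.e. when $\lambda>g_\beta^{-1}\big(\big(\frac{n-1}{2}\big)^{2/(2-\alpha)}\mathcal E(\alpha_0,\gamma)\big)$, since $g_\beta$ is increasing (being the composition of the increasing $\phi_\beta$ with $\psi_\beta$, which is increasing on $(0,\infty)$ as $b>1$). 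By the definition \eqref{expg2} of $\lambda^*(n)$, this yields the claimed bound.

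I expect the main obstacle to be making the uniform-in-$x$ Gaussian tail estimate for $\EE[u_0(B^j(t)+x)^p]$ rigorous and sharp: one must handle the $L^p$ power on $u_0$ (which may blow up the integrability exponent, hence the need to later send $q\downarrow1$, i.e. $p\uparrow\infty$, and check the variational constant is stable), justify the passage from the discrete Gaussian integral to the Legendre-type minimization $\inf_y f_{\beta,x}(y)$ with the correct constant, and verify the interchange of the $\limsup$ in $t$ with the supremum over the moving region $\{|x|\ge\lambda t^{(a+1)/2}\}$. A secondary technical point is confirming that the rescaling in \eqref{eq:scaling}, which was stated for Brownian bridges, transfers correctly to the Brownian-motion moment formula \eqref{k34} together with the initial-data factor, so that the two H\"older pieces are normalized consistently by $t^{-a}$.
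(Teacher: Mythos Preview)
Your H\"older decomposition introduces a genuine obstruction that the paper's argument avoids entirely. The first factor in your splitting requires control of $\EE\big[u_0(B(t)+x)^p\big]=(p_t*u_0^p)(x)$ for some $p>1$, i.e.\ integrability of $u_0^p$ against a Gaussian weight. The hypothesis $\int_{\RR^\ell}e^{\beta|y|^b}u_0(y)\,dy<\infty$ says nothing about $u_0^p$ for $p>1$; $u_0$ need not even be locally in $L^p$. Your proposed remedy---sending $q\downarrow1$ so that the exponential factor becomes sharp---forces $p\uparrow\infty$, which makes this integrability problem strictly worse, not better. So the route through H\"older on the Brownian-motion formula \eqref{k34} cannot close under the stated assumptions.

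The paper sidesteps this completely by observing that Corollary~\ref{cor:u} already contains the needed factorization: from the Brownian \emph{bridge} formula \eqref{eqn:FKbridge} together with the shift inequality \eqref{est:GGy}, one gets directly
\[
\EE\,u^n(t,x)\;\le\;\Big(\prod_{j=1}^n p_t*u_0(x^j)\Big)\,\EE\exp\Big\{\sum_{j<k}\int_{[0,t]^2}\frac{\gamma(B_{0,t}^j(s)-B_{0,t}^k(r))}{|s-r|^{\alpha_0}}\,dr\,ds\Big\},
\]
so that $\EE\,u^n(t,x)\le (p_t*u_0(x))^n\cdot\sup_y\EE\big[(u(t,y)/p_t*u_0(y))^n\big]$ with no H\"older exponent to tune. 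The first factor is then estimated by the elementary pointwise bound
\[
\frac{|y-x|^2}{2t}+\beta|y|^b \;\ge\; t^a\,f_{\beta,\,|x|t^{-(a+1)/2}}\big(|y|t^{-(a+1)/2}\big)\;\ge\; t^a\,\psi_\beta(\phi_\beta(\lambda))
\]
for all $|x|\ge\lambda t^{(a+1)/2}$ (this is exactly your variational observation \eqref{min.f}, but applied to $p_t*u_0$ itself rather than to $p_t*u_0^p$). Combining with Theorem~\ref{thm:expBB} gives
\[
\limsup_{t\to\infty}t^{-a}\sup_{|x|\ge\lambda t^{(a+1)/2}}\log\EE\,u^n(t,x)\;\le\;n\Big(\tfrac{n-1}{2}\Big)^{2/(2-\alpha)}\mathcal E(\alpha_0,\gamma)-n\,g_\beta(\lambda),
\]
with no limiting procedure in auxiliary exponents. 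Your conclusion from here on (negativity when $\lambda>g_\beta^{-1}(\cdots)$, monotonicity of $g_\beta$) is correct.
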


\begin{proof}
	It suffices to show that for any $\lambda>0$,
	\begin{align*}
		\limsup_{t\to\infty}t^{-a}\log\sup_{|x|\ge \lambda t^{\frac{a+1}{2}}}\EE u^n(t,x)\le n\left(\frac{n-1}2\right)^{\frac2{2- \alpha}}\cee(\alpha_0,\gamma)-n \psi_ \beta(\phi_ \beta(\lambda)).
	\end{align*}	
	 We write
	\begin{equation*}
		\sup_{|x|\ge \lambda t^{\frac{a+1}{2}}}\EE u^n(t,x) \le \left(\sup_{y\in\RR^\ell}\EE \left(\frac{u(t,y)}{p_t*u_0(y)}\right)^n \right)\left(\sup_{|x|\ge \lambda t^{\frac{a+1}{2}}}p_t*u_0(x) \right)^n\,.
	\end{equation*}
	Together with Corollary \ref{cor:u}, it suffices to show the inequality
	\begin{equation}\label{ptu0 upper}
		\limsup_{t\to\infty}t^{-a} \log \sup_{|x|\ge \lambda t^{\frac{a+1}2}}p_t*u_0(x) \le -\psi_ \beta(\phi_ \beta(\lambda)) \,.
	\end{equation}
	We observe that by the  triangle inequality,
	\begin{align*}
		\frac1{2t}|y-x|^2+\beta|y|^b\ge t^a f_{\beta,|x|t^{-\frac{a+1}2}}(|y|t^{-\frac{a+1}2})\,.
	\end{align*}
	Hence, together with \eqref{min.f}, we see that for every $|x|\ge \lambda t^{\frac{a+1}2} $
	\begin{equation*}
		\frac1{2t}|y-x|^2+\beta|y|^b\ge t^a \psi_ \beta (\phi_ \beta(\lambda))\,.
	\end{equation*}
	Thus,
	\begin{equation*}
		\sup_{|x|\ge \lambda t^{\frac{a+1}2}}\int_{\RR^\ell}e^{-\frac1{2t}|y-x|^2}u_0(y)dy\le e^{-t^a \psi_ \beta(\phi_ \beta(\lambda))}\int_{\RR^\ell}e^{\beta|y|^b}u_0(y)dy\,.
	\end{equation*}
	which implies \eqref{ptu0 upper}.
\end{proof}

As $\beta$ tends to infinity, $\phi_\beta(\lambda)$ tends to zero and it behaves as $\left( \lambda  / b\beta \right)^{\frac 1{b-1}}$.
Therefore, $g_\beta(\lambda)$ behaves as $\frac 12 \lambda^2$. These facts lead to the following result.

\begin{corollary} \label{cor2} Under the assumptions of Theorem  \ref{teo1}, if  $u_0$ satisfies $	 	\int_{\RR^\ell} e^{\beta|y|^b}u_0(y)dy<\infty$
for all $\beta>0$, then
	  \[
   \lambda ^*(n) \le
  \sqrt{ 2  \left(  \frac {n-1} 2 \right)^{ \frac 2 {2-\alpha}}   \mathcal{E}(\alpha_0,\gamma) }.
	 \]
\end{corollary}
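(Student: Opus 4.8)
The plan is to let $\beta\to\infty$ in the estimate of Theorem \ref{teo1}. Write $c_n=\left(\frac{n-1}{2}\right)^{\frac{2}{2-\alpha}}\mathcal{E}(\alpha_0,\gamma)$. Under the present hypotheses, Theorem \ref{teo1} applies for every $\beta>0$ and gives $\lambda^*(n)\le g_\beta^{-1}(c_n)$, hence
\[
\lambda^*(n)\le \inf_{\beta>0} g_\beta^{-1}(c_n)\le \liminf_{\beta\to\infty} g_\beta^{-1}(c_n).
\]
Thus it suffices to prove that $\limsup_{\beta\to\infty} g_\beta^{-1}(c_n)\le\sqrt{2c_n}$, and for that it is enough to show $g_\beta(\lambda)\to\frac12\lambda^2$ as $\beta\to\infty$ for each fixed $\lambda>0$.

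First I would analyze $\phi_\beta(\lambda)$ for fixed $\lambda>0$. From the defining relation \eqref{eqn:phi}, $\beta b\,\phi_\beta(\lambda)^{b-1}=\lambda-\phi_\beta(\lambda)\le\lambda$; since $b>1$, the left-hand side would diverge if $\phi_\beta(\lambda)$ stayed bounded away from $0$ along a subsequence of $\beta\to\infty$, so necessarily $\phi_\beta(\lambda)\to0$. Consequently $\lambda-\phi_\beta(\lambda)\to\lambda$, so $\beta b\,\phi_\beta(\lambda)^{b-1}\to\lambda$, which is exactly the stated asymptotics $\phi_\beta(\lambda)\sim\left(\lambda/(b\beta)\right)^{1/(b-1)}$.

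Next I would substitute into \eqref{gbeta}. Using \eqref{eqn:phi} twice, the first term of $g_\beta(\lambda)$ equals $\frac12\big(\beta b\,\phi_\beta(\lambda)^{b-1}\big)^2=\frac12(\lambda-\phi_\beta(\lambda))^2\to\frac12\lambda^2$, while the second term equals $\beta\,\phi_\beta(\lambda)^b=\phi_\beta(\lambda)\cdot\beta\,\phi_\beta(\lambda)^{b-1}=\phi_\beta(\lambda)\cdot\frac{\lambda-\phi_\beta(\lambda)}{b}\to0$. Hence $g_\beta(\lambda)\to\frac12\lambda^2$ for every $\lambda>0$. To transfer this to the inverses, fix any $\lambda_0>\sqrt{2c_n}$; then $\frac12\lambda_0^2>c_n$, so $g_\beta(\lambda_0)>c_n$ for all $\beta$ sufficiently large, and since $g_\beta$ is increasing this forces $g_\beta^{-1}(c_n)<\lambda_0$ for such $\beta$. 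Therefore $\limsup_{\beta\to\infty} g_\beta^{-1}(c_n)\le\lambda_0$, and letting $\lambda_0\downarrow\sqrt{2c_n}$ gives $\limsup_{\beta\to\infty} g_\beta^{-1}(c_n)\le\sqrt{2c_n}$, which together with the first display completes the proof.

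I do not expect a real obstacle: once Theorem \ref{teo1} is available, the argument is elementary real analysis. The only points needing a little care are the justification that $\phi_\beta(\lambda)\to0$ as $\beta\to\infty$ and the minor bookkeeping by which pointwise convergence $g_\beta(\lambda)\to\frac12\lambda^2$ passes to the inverses via monotonicity of $g_\beta$; the rest is direct substitution using \eqref{eqn:phi}.
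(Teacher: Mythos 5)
Your proposal is correct and takes essentially the same route as the paper: the paper's own justification is the remark immediately preceding the corollary, which asserts $\phi_\beta(\lambda)\to 0$ with $\phi_\beta(\lambda)\sim(\lambda/(b\beta))^{1/(b-1)}$ and hence $g_\beta(\lambda)\to\frac12\lambda^2$, then applies Theorem \ref{teo1}. You have simply made those asymptotics rigorous (noting, a bit more cleanly, that $\phi_\beta(\lambda)\to0$ alone suffices via the two substitutions from \eqref{eqn:phi}) and added the monotonicity argument to transfer the limit of $g_\beta$ to $g_\beta^{-1}$.
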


\subsection{Lower bound for    $ \lambda _*(n) $}
The main result is the following.

\begin{theorem} \label{t1}
Assume conditions \ref{H2} and \ref{con:S}. Suppose that $u_0$ is non-trivial and non-negative. In addition, we assume that
\begin{equation*}
	\limsup_{t\to\infty}\frac1 {t^a}\lt|\sup_{|x|\ge \lambda t^{\frac{a+1}2}} \log p_t*u_0(x) \rt|<\infty 
\end{equation*} 
for any $\lambda>0$. Then,
	  \[
   \lambda _*(n) \ge    a^{\frac a2} (a+1) ^{-\frac {a+1}2}
  \sqrt{ 2  \left(  \frac {n-1} 2 \right)^{ \frac 2 {2-\alpha}}   \mathcal{E}(\alpha_0,\gamma) }.
	 \]
\end{theorem}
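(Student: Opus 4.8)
The strategy is to insert into the Feynman--Kac moment formula of Corollary~\ref{cor1} a Cameron--Martin shift that transports the $n$ independent Brownian motions towards $x$ on an initial fraction $[0,\theta t]$ of the time interval, and then to run on $[\theta t,t]$ the intermittent mechanism behind the lower‑bound companion of Theorem~\ref{thm:expBB}. Since $u_0$ is nonnegative and non‑trivial, fix $z_0\in\RR^\ell$, $M>0$, $\rho>0$ with $u_0\ge\rho\,\mathbf 1_{B(z_0,M)}$. Because $\sup_{|x|\ge\lambda t^{(a+1)/2}}\log\EE u^n(t,x)\ge\log\EE u^n(t,x_t)$ for any admissible $x_t$, it is enough to produce, for each large $t$, a point $x_t$ with $|x_t|=\lambda t^{(a+1)/2}$ such that, for every $\theta\in(0,1)$,
\begin{equation*}
\liminf_{t\to\infty}t^{-a}\log\EE u^n(t,x_t)\ \ge\ -\frac{n\lambda^2}{2\theta}+n\Bigl(\tfrac{n-1}{2}\Bigr)^{\frac{2}{2-\alpha}}\mathcal E(\alpha_0,\gamma)\,(1-\theta)^{a}.
\end{equation*}
Maximizing the right‑hand side over $\theta$ (the maximum of $\theta(1-\theta)^{a}$ equals $a^{a}(a+1)^{-(a+1)}$, attained at $\theta=1/(a+1)$) makes it strictly positive exactly when $\lambda<a^{a/2}(a+1)^{-(a+1)/2}\sqrt{2(\tfrac{n-1}{2})^{2/(2-\alpha)}\mathcal E(\alpha_0,\gamma)}$, which gives the claimed bound on $\lambda_*(n)$.

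\emph{Transport step.} Fix $\theta\in(0,1)$ and let $\phi$ be the deterministic path equal to $\frac{s}{\theta t}(z_0-x_t)$ on $[0,\theta t]$ and equal to $z_0-x_t$ on $[\theta t,t]$. Shifting each Brownian motion in Corollary~\ref{cor1} by $\phi$, the Radon--Nikodym density contributes the deterministic factor $\exp(-\frac{n|z_0-x_t|^2}{2\theta t})=\exp(-\frac{n\lambda^2}{2\theta}t^{a}(1+o(1)))$ and the stochastic factor $\exp(-\sum_{j}\frac{z_0-x_t}{\theta t}\cdot B^j(\theta t))$, while the shifted initial term becomes $\prod_j u_0(B^j(t)+z_0)\ge\rho^{n}$ on $\{|B^j(t)|\le M,\ \forall j\}$. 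The key point is that $\phi$ is constant on $[\theta t,t]$, so $\phi(s)-\phi(r)=0$ there; since $\gamma\ge0$ under \ref{H2} (this is precisely why sign‑changing covariances of type \ref{H1} are excluded here) and $\gamma_0\ge0$, the interaction functional over $[0,t]^2$ dominates $\sum_{j<k}\int_{[\theta t,t]^2}\gamma(B^j(s)-B^k(r))|s-r|^{-\alpha_0}\,dr\,ds$, which is unchanged by the shift.

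\emph{Intermittency step.} It remains to bound from below $\EE[\mathbf 1_{\{|B^j(t)|\le M\,\forall j\}}e^{-\sum_j\frac{z_0-x_t}{\theta t}\cdot B^j(\theta t)}\exp\{\sum_{j<k}\int_{[\theta t,t]^2}\gamma(B^j(s)-B^k(r))|s-r|^{-\alpha_0}dr\,ds\}]$. Intersecting with $\{|B^j(\theta t)|\le R\sqrt t,\ \forall j\}$ (probability bounded away from $0$ for $R$ large), the stochastic Radon--Nikodym factor is at least $\exp(-cR\lambda\,t^{a/2})=\exp(o(t^{a}))$ because $a>0$; then the Markov property at time $\theta t$ together with translation invariance of the interaction reduces matters to a lower bound, uniform over starting configurations of size $O(\sqrt t)$, for the exponential functional of $n$ independent Brownian motions over a time interval of length $(1-\theta)t$ conditioned to end in a fixed ball. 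This is the Brownian‑motion analogue of the lower bound announced in the remark following Theorem~\ref{thm:expBB} (obtained by Chen's method, cf. \cite{Chen} and \cite{HLN}), namely $\liminf_{T\to\infty}T^{-a}\log\EE\exp\{\sum_{j<k}\int_{[0,T]^2}\gamma(B^j(s)-B^k(r))|s-r|^{-\alpha_0}dr\,ds\}\ge n(\tfrac{n-1}{2})^{2/(2-\alpha)}\mathcal E(\alpha_0,\gamma)$; the terminal localization and the $O(\sqrt t)$ re‑centering needed to steer the paths into the ball cost only $O(1)$ — insert an auxiliary interval of length $\varepsilon(1-\theta)t$ and let $\varepsilon\downarrow0$ at the end — hence do not alter the exponential rate. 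The quantitative hypothesis on $p_t*u_0$ serves to guarantee, for initial data that are not compactly supported, that the contribution of $u_0$ remains on the scale $t^{a}$ rather than decaying faster. Combining the transport and intermittency steps yields the displayed inequality, completing the proof.

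\emph{Main obstacle.} The delicate part is the intermittency step: one must simultaneously force the Brownian motions to end inside a fixed ball (dictated by the support of $u_0$) and to ``clump'' so that the interaction functional is of size $((1-\theta)t)^{a}$ — and do so uniformly over the random, $\sqrt t$‑spread positions at time $\theta t$ — while keeping every transport and Radon--Nikodym error of order $o(t^{a})$. This is where the reduction to the Brownian‑motion large‑deviation lower bound (and its robustness under bounded/controlled shifts and endpoint conditioning) does the real work.
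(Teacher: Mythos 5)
Your global strategy --- transport the Brownian paths from the origin towards $x_t$ on an initial fraction of the time interval, discard the interaction there, and then invoke a large-deviation lower bound for the remaining interaction functional --- is a natural one, and the optimization over the splitting parameter does produce the correct constant $a^{a/2}(a+1)^{-(a+1)/2}$. The first (transport) step is fine: the Cameron--Martin cost is $\exp(-\tfrac{n\lambda^2}{2\theta}t^a(1+o(1)))$, the stochastic Radon--Nikodym factor is $\exp(O(t^{a/2}))=\exp(o(t^a))$, and the monotonicity in the interaction domain uses $\gamma\ge0$ exactly as you note.

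The problem is the intermittency step, which you rightly flag as the ``main obstacle'' but then do not supply. You invoke ``the Brownian-motion analogue of the lower bound announced in the remark following Theorem~\ref{thm:expBB},'' but that Remark asserts only the Brownian \emph{bridge} version of~\eqref{upper bd 2} (unconditioned, all paths pinned at $0$), and it goes on to say explicitly that ``a lower bound similar to that proved in Corollary~\ref{cor:u} cannot be obtained,'' which is precisely why the paper switches to a direct argument. What your reduction actually needs is a lower bound for the \emph{quadratic} interaction functional of $n$ Brownian motions over $[0,(1-\theta)t]$, uniformly over starting configurations $\{y^j\}$ of spread $O(\sqrt t)$ and under the endpoint constraint $|\widetilde B^j((1-\theta)t)+y^j|\le M$. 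The claim that the $O(\sqrt t)$ re-centering plus the terminal conditioning can be absorbed by an auxiliary $\varepsilon(1-\theta)t$ block at a cost that is $o(t^a)$ uniformly over the random starting data is plausible, but it is not a small modification: it is essentially the content of a variational lower bound with boundary data that is not available in the paper, and verifying it would require redoing a Chen-type argument under boundary conditions. As presented, the proposal has a genuine gap at this point.

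For comparison, the paper's proof avoids ever invoking a pre-established lower bound for the quadratic functional. Instead it linearizes the problem via Gaussian duality: writing the solution as $\EE_B\int u_0(x+y)p_t(y)Z(\psi_{x,y})\,dy$ with a Wick exponential $Z(\psi_{x,y})$ and applying H\"older against a deterministic element $\phi$ of the Cameron--Martin space of $W$, one gets (\ref{eq5'}), a lower bound in which the pairing $\langle\phi,\psi_{x,y}\rangle_{\mathcal H_{p,\varepsilon}}$ is \emph{linear} along the Brownian bridge path. Choosing $\phi$ so that $\widehat\phi$ is built from a test function $g\in\mathcal A_\ell$, one obtains after rescaling a linear exponential functional $\int_0^{t_n}f(\tfrac s{t_n},B_{0,t_n}(s)+\cdot)\,ds$, to which \cite[Proposition 3.1]{CHSX} (a Feynman--Kac variational formula for a bounded, time-dependent potential, with the compact-ball localization $A_R$) applies directly; the $\|\phi\|^2$ term and the $\int f g^2$ term then reassemble into the supremand of $\mathcal E(\alpha_0,\gamma)$. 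This duality is what lets the paper handle the boundary data (via (\ref{id:density}) and the ball $A_R$) and the $\sqrt{t}$-spread simultaneously, without needing a large-deviation lower bound for the quadratic interaction itself. If you want to repair your approach, the practical route is to replace the invoked lemma by this Wick-exponential/H\"older linearization, which is where the paper's real leverage lies.
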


\begin{proof}
Set
\[
I(t):= \frac 1 {t^a} \sup_{|x| \ge \lambda t^{\frac { a+1}2}}  \log \EE u^n (t,x) .
\]
To derive a lower bound for $I_t$ we proceed as follows.     We will make use of the notation
	\[
Q_t\gamma(y):= \sum_{0\le j<k \le n}  \int_{[0,t]^2}  \gamma(B^j_{0,t}(s)- B^k_{0,t}(r) + \frac s t  y^j -\frac  rt  y^k)  {|s-r|^{-\alpha_0}} drds.
	\]
Then, by the Feynman-Kac formula for the moments of the solution in terms of Brownian bridges proved in Proposition \ref{prop1}, we have
		\begin{equation} \label{eq2}
\EE u^n(t,x) =\int_{(\RR^\ell)^n}\EE  \prod_{j=1}^n u_0(x+y^j) p_t(y^j) \exp \left\{ Q_t\gamma(y) \right\} dy.
\end{equation}
For each $\ep>0$, $p>1$, applying H\"older inequality,   we see that
		\begin{align}
			\EE u^n(t,x)
			&\ge  \left(\int_{(\RR^\ell)^n}\EE \prod_{j=1}^n u_0(x+y^j) p_t(y^j) \exp \left\{  \frac1p Q_t\gamma_\ep(y) \right\}dy \right)^p\notag
			\\&\quad\times\left(\int_{(\RR^{\ell})^n}\prod_{j=1}^n u_0(x+y^j) p_t(y^j) \EE  \exp \left\{ \frac 1{p-1}   Q_t(
			\gamma_\ep- \gamma)(y) \right\} dy\right)^{1-p}. \label{eq1'}
		\end{align}
		Notice that, from (\ref{est:GGy}) we can write 
		\begin{equation} \label{eq1}
		\EE  \exp \left\{ \frac {1}{p-1}   Q_t(
			\gamma_\ep- \gamma)(y) \right\} \le  \EE  \exp \left\{ \frac {1}{p-1}   Q_t(
			\gamma- \gamma_\ep)(0) \right\}.
			\end{equation}
{Substituting (\ref{eq1}) into  (\ref{eq1'})} yields
		\begin{eqnarray}
			I(t)  &\ge & - \frac {p-1} {t^a} \log   \EE  \exp \left\{ \frac {1}{p-1}   Q_t(
			\gamma- \gamma_\ep)(0) \right\} -n\frac{p-1}{t^a}\lt|\sup_{|x|\ge \lambda t^\frac{a+1}2} \log (p_t*u_0(x))\rt| \nonumber\\
			&&+ \frac p {t^a} \sup_{|x|\ge \lambda t^\frac{a+1}2}  \log 
			\int_{(\RR^\ell)^n}  \prod_{j=1}^n u_0(x+y^j) p_t(y^j) 	\EE  \exp \left\{ \frac 1pQ_t\gamma_\ep(y) \right\} dy\nonumber\\
			&:=& I_1(t)+I_2(t) + I_3(t).\label{eqI12}
		\end{eqnarray}
		Choosing $\varepsilon=\varepsilon(t)=\delta t^{1-a}$ with $\delta>0$, from (\ref{ecua11})  we obtain
			\begin{equation*} 
				\lim_{\delta \rightarrow 0} 	\limsup _{t\rightarrow \infty} \frac 1{t^a} \log   \EE  \exp \left\{ \frac qp   Q_t(\gamma- \gamma_{\delta t^{1-a}})(0) \right\} \le 0.
			\end{equation*}
		In addition, from our assumption,
		\begin{equation*}
			\lim_{p\to1}\limsup_{t\to\infty}I_2(t)=0\,.
		\end{equation*}
		In other words, $I_1(t)$ and $I_2(t)$ are negligible in the limits $t\to\infty$, $\delta\to0$ and $p\to1$. 

		We now consider $I_3$. It can be written as
			\[
			I_3(t)= \frac p {t^a} \sup_{|x|\ge \lambda t^{\frac{a+1}2}}  \log \EE (u_{\ep,p}^n(t,x)),
			\]
			where $u_{\ep,p}(t,x)$ denotes the solution of equation  (\ref{SHE}) with initial condition $u_0$ and spatial covariance {$\frac{1}{p}\gamma_{\ep(t)}$}, where
			$\ep=\ep(t) = \delta t^{1-a}$.  { Define $\mathcal{H}_{p,\varepsilon}$ as in (\ref{innprod1}), but with $\mu(d\xi)$ replaced by $\frac{1}{p}e^{-\varepsilon|\xi|^2}\mu(d\xi)$}. 
			
			For every $\phi$ in {$\mathcal{H}_{p,\varepsilon}$}, we denote by $Z(\phi)$ the (Wick) exponential functional
	\begin{equation*}
		Z(\phi)=\exp\left\{W(\phi)-\frac12\|\phi\|_{\mathcal{H}_{p,\varepsilon}}^2 \rt\}\,.
	\end{equation*}
			By the Feynman-Kac formula for the solution of equation (\ref{SHE}), when the spatial covariance is bounded, we obtain
			\[
			u_{\ep,p}(t,x) =\EE_B \int_{\RR^\ell}  u_0 (x+y) p_t(y) Z(\psi_{x,y})  dy,
			\]
			where  $\psi_{x,y}(s,z)=    \delta(B_{0,t} (t-s) +x + \frac {t-s}t y -z){\bf 1}_{[0,t]}(s)  $ and			\[
			\| \psi_{x,y}\|^2_{{\mathcal{H}_{p,\varepsilon}}} =\int_{[0,t]^2} { \frac{1}{p}} \gamma_\ep\left(B_{0,t}(s) -B_{0,t}(r) +\frac {s-r}t y\right) |s-r|^{-\alpha_0} dsdr .
			\]
Let $q$ be such that $\frac1n+\frac1q=1$. Using H\"older inequality, for any $\phi \in \mathcal{H}_{p,\varepsilon}$ we have
	\begin{eqnarray}  \nonumber
	\EE(u_{\ep,p}^n(t,x))  &=&\EE_W \left( \EE_B \int_{\RR^\ell}  u_0(x+y) p_t(y) Z(\psi_{x,y})  dy \right)^n\\ \nonumber
	&\ge &  \|Z(\phi)\|_{L^q(\Omega)}^{-n}  \left( \EE_W  \left(Z(\phi) \EE_{B} \int_{\RR^\ell}  u_0(x+y) p_t(y) Z(\psi_{x,y})  dy \right) \right)^n\\ \nonumber
	&=& \exp\lt\{-\frac{n}{2(n-1)}\|\phi\|^2_{\mathcal{H}_{p,\varepsilon}} \rt\}  \\
	&&\times \left( \int_{\RR^\ell}  u_0(x+y) p_t(y)    \EE_B [\exp\{ \langle \phi, \psi_{x,y} \rangle_{\mathcal{H}_{p,\varepsilon}} \} ] dy  \right)^n. \label{eq5'}
	\end{eqnarray}
	We are going to choose an element $\phi$, which depends on $t$ and $x$.
	
	Our next step is the computation of the inner product  $ \langle \phi, \psi_{x,y} \rangle_{\mathcal{H}_{p,\varepsilon}} $. We can write
	\begin{eqnarray*}
	\langle \phi, \psi_{x,y} \rangle_{\mathcal{H}_{p,\varepsilon}} &=& \frac{1}{p}\int_0^t \int_0^t |s-r|^{-\alpha_0}   \int_{\RR^\ell} \phi(r,z) \gamma_\ep(B_{0,t}(t-s) +x +\frac {t-s}t y-z)dzdsdr\\
	&=& \frac{1}{p} \int_0^t \int_0^t |s-r|^{-\alpha_0}   \int_{\RR^\ell} \phi(t-r,z+x) \gamma_\ep(B_{0,t}(s) +\frac {s}t y-z)dzdsdr.
	\end{eqnarray*}
 Set
	\begin{equation*}
	t_n=c t^a,  
	\end{equation*}
	where $a=\frac{4-\alpha-2\alpha_0}{2-\alpha}$ and  $c=(n-1)^{\frac{2}{2-\alpha}}$.
	Making   the change of variables $s \rightarrow \frac{t}{t_n} s$ and $r \rightarrow tr$ and using the scaling property for Brownian bridge, we obtain that
	 \begin{eqnarray*}
	 \langle \phi, \psi_{x,y} \rangle_{\mathcal{H}_{p,\varepsilon}} &=& \frac{1}{p}  c^{\frac \alpha 2-1}  \int_0^1 \int_0^{t_n}  |\frac{s}{t_n}-r|^{-\alpha_0}   \int_{\RR^\ell} \phi(t-rt,z+x) \\
	 &&\times \gamma_{\ep \frac {t_n}t}\left(B_{0,t_n}(s) +\frac {s}{\sqrt{t_nt}}    y- \sqrt{\frac {t_n} t}z\right)dzdsdr.
	 \end{eqnarray*}
	 Finally, the change of variables $z\rightarrow  \sqrt{\frac {t} {t_n}}z$ yields
  \begin{eqnarray*}
	 \langle \phi, \psi_{x,y} \rangle_{\mathcal{H}_{p,\varepsilon}} &=& \frac{1}{p} \left( \frac  t {t_n}  \right)^{\frac \ell 2} c^{\frac \alpha 2-1}  \int_0^1 \int_0^{t_n}  |\frac{s}{t_n}-r|^{-\alpha_0}   \int_{\RR^\ell} \phi(t-rt, \sqrt{\frac  t {t_n}} z+x) \\
	 &&\times \gamma_{ c\delta }\left(B_{0,t_n}(s) +\frac {s}{\sqrt{t_nt}}    y- z\right)dzdsdr.
	 \end{eqnarray*}
Choosing $\phi$ of the form
	\[
	\phi(r,z)=  \left( \frac  {t_n} t \right)^{\frac \ell 2} c^{1-\frac \alpha 2}  \widehat{\phi} \left( \frac {t-r}t, \sqrt{\frac {t_n}t}(z - x)\right) \mathbf{1}_{[0,t]}(r),
	\]
	where $\widehat{\phi}$ satisfies
	\begin{equation}   \label{h1}
		\sup_{r\in [0,1]} \int_{\RR^\ell} |\widehat{\phi}(r,y)|dy <\infty,
		\end{equation} 
	we can write
	\[
		\langle \phi, \psi_{x,y}\rangle_{\mathcal{H}_{p,\varepsilon}}
		= \frac{1}{p} \int_0^1 \int_0^{t_n}  |\frac{s}{t_n}-r|^{-\alpha_0}   \int_{\RR^\ell}  \widehat{\phi}(r, z)   \gamma_{ c\delta }\left(B_{0,t_n}(s) +\frac {s}{\sqrt{t_nt}}    y- z\right)dzdsdr.
		\]
		Set
		\[
		f(s, w) = \frac{1}{p}\int_0^1 \int_{\RR^\ell}   \frac {\widehat{\phi}(r,z) \gamma_{c\delta} (w-z)}{| s -r |^{\alpha_0}}  dzdr.
		\]
		Then, we obtain
		\begin{equation}  \label{eq6}
		\langle \phi, \psi_{x,y}\rangle_{\mathcal{H}_{p,\varepsilon}}=\int_0^{t_n} f(\frac s{t_n}, B_{0,t_n}(s) +\frac {s}{\sqrt{t_nt}}    y)ds.
		\end{equation}
		On the other hand, for this choice of $\phi$, we obtain
	 \begin{eqnarray*}
		\| \phi\|_{\mathcal{H}_{p,\varepsilon}} ^2&=& \frac{1}{p}\left( \frac {t_n} t \right)^{\ell} c^{2-\alpha}   \int_0^t \int_0^t |s-r|^{-\alpha_0} \\
		&&\times \int_{(\RR^\ell)^2} 
		 \widehat{\phi} \left( \frac {t-r}t, \sqrt{\frac {t_n}t} (z- x)\right) \widehat{\phi} \left( \frac {t-s}t, \sqrt{\frac {t_n}t}{w - x}\right) \gamma_{\ep} (z-w) dzdwdrds.
		 \end{eqnarray*}
		The change of variables  $s\rightarrow  t-ts$, $r\rightarrow t-tr$, $z\rightarrow  \sqrt{\frac  t{t_n }} z + x$ and  $w\rightarrow  \sqrt{\frac  t{t_n }} w + x$ leads to
			\begin{equation}  \label{eq7}
		\| \phi\|_{\mathcal{H}_{p,\varepsilon}} ^2 = \frac{1}{p} t^a c^{2-\frac \alpha 2}   \int_0^1 \int_0^1  \int_{(\RR^\ell)^2} 
		\frac{   \widehat{\phi} (r,z) \widehat{\phi} (s,w)}{ |s-r|^{-\alpha_0} } \gamma_{c\delta} (z-w) dzdwdrds.
		 \end{equation}
		Substituting (\ref{eq6}) and (\ref{eq7}) into (\ref{eq5'}), we get
			 \begin{eqnarray*}
		\frac p {t^a} \log \EE(u_{\ep,p}^n(t,x)) &\geq&-\frac {n}{2(n-1)} c^{2-\frac \alpha 2}   \int_0^1 \int_0^1 \int_{(\RR^\ell)^2} 
		\frac { \widehat{\phi} (r,z) \widehat{\phi} (s,w)}{|s-r|^{\alpha_0}} \gamma_{c\delta} (z-w) dzdwdrds \\
		&&+\frac {np} {t^a}  \log \int_{\RR^\ell}  u_0 (x+y) p_t(y)    \EE_B \exp\left\{ \int_0^{t_n} f(\frac s{t_n}, B_{0,t_n}(s) +\frac {s}{\sqrt{t_nt}}    y)ds   \right\}  dy.
			 \end{eqnarray*}
		This together with \eqref{eqI12} leads to the inequality, for any $K>0$,
		\[
		I_3(t) \ge I_{3,1} + I_{3,2}(t) + I_{3,3}(t) ,
		\]
		where
		\[
		I_{3,1}=-\frac {n}{2(n-1)} c^{2-\frac \alpha 2}   \int_0^1 \int_0^1 \int_{(\RR^\ell)^2} 
		\frac { \widehat{\phi} (r,z) \widehat{\phi} (s,w)}{|s-r|^{\alpha_0}} \gamma_{c\delta} (z-w) dzdwdrds,
		\]
		\[
		I_{3,2}({t,x}) =  \frac {np} {t^a}   \log \int_{|y| \le K\sqrt{tt_n}}  u_0(x+y) p_t(y)dy
		\]
		and
		\[
		I_{3,3}(t)=\frac {np} {t^a}   \inf_{|y| \le Kt_n} \log \EE_B \exp\left\{ \int_0^{t_n} f(\frac s{t_n}, B_{0,t_n}(s) +\frac {s}{t_n}    y)ds\right\} .
		\]
We are going to analyze these three terms and this will be done in several steps.

\medskip
\noindent
{\it Step 1}. Using the properties of the initial condition, we claim that if $\lambda < K\sqrt{c}$, then
	\begin{equation}\label{eq11}
		\liminf_{t\rightarrow \infty} I_{3,2}(t) \ge -\frac {np}2 K^2c.
	\end{equation}
	Notice first that  $\sqrt{tt_n}= \sqrt{c} t^{\frac {a+1} 2}$. 	Recall  that $u_0$ is non-trivial, there exists $M>0$ such that $\int_{|y|\le M}u_0(y)dy>0$. For $t$ large enough, 
	$ \lambda t^{\frac { a+1}2} +M \le  K\sqrt{c} t^{\frac {a+1} 2}$.  Therefore,  choosing $x_0$ such that $|x_0| =\lambda t^{\frac { a+1}2}$ implies that 
	\[
	\{y: |x_0+ y|\le M \} \subset \{y:  |y|\le K\sqrt{c} t^{\frac {a+1} 2} \}.
	\]
Thus we obtain 
		\begin{equation*}  
	\liminf_{t\rightarrow \infty} I_{3,2}(t) \ge  \liminf_{t\rightarrow \infty}  \frac {np} {t^a} \log\int_{|x_0 +y| \le M} e^{-\frac{K^2 c t^a}2} u_0(x_0+y) dy=-\frac {np}2 K ^2c,
		\end{equation*}
	which is \eqref{eq11}.
	
\medskip
\noindent
{\it Step 2}.  We can write 
	\[
	\liminf_{t\rightarrow \infty} I_{3,3}(t) =\liminf_{t\rightarrow \infty} \frac {npc} {t}   \inf_{|y| \le Kt } \log \EE_B \exp\left\{ \int_0^{t} f(\frac s{t}, B_{0,t}(s) +\frac {s}{t}    y)ds\right\}  
	\]
	For any $\rho \in (0,1)$, we can write
	\[
	\EE_B \exp\left\{ \int_0^{t} f(\frac s{t}, B_{0,t}(s) +\frac {s}{t}    y)ds\right\}  
\ge \EE_B  \exp\left\{ \int_0^{\rho t} f(\frac s{t}, B_{0,t}(s) +\frac {s}{t}    y)ds\right\}.
\]
From (\ref{id:density}), we get
\begin{multline} 
 \EE_B \exp\left\{ \int_0^{\rho t} f(\frac s{t}, B_{0,t}(s) +\frac {s}{t}    y)ds\right\} 
\\\ge(1-\rho)^{-\frac \ell 2}
   \EE_B \left({\bf 1}_{ A_R}   \exp\left\{ \int_0^{\rho t} f(\frac s{t}, B(s))ds
+\frac {|y|^2} {2t} -\frac {|y-B(\rho t)|^2} {2t(1-\rho)} \right\} \right),
	\end{multline}
where $A_R =\{ \sup_{0\le s\le \rho t} |B(s)| \le R\}$ for $R >0$. Notice that, if $|y| \le Kt$, on the set $A_R$ we have
	\begin{equation} \label{eq9}
\frac {|y|^2} {2t} -\frac {|y-B(\rho t)|^2} {2t(1-\rho)} 
\ge - \frac \rho {1-\rho} \frac {K^2}2 t -\frac {KR} {(1-\rho)} -\frac {R^2} {2t(1-\rho)}.
	\end{equation}
On the other hand, by Proposition 3.1 of  \cite{CHSX} we obtain
\[
\lim_{t\rightarrow \infty} \frac 1t \log  \EE_B \left(1_{ A_R}   \exp\left\{ \int_0^{\rho t} f(\frac s{t}, B(s))ds \right\} \right)
=\rho  \int_0^1 \Lambda_R(f(\rho s, \cdot))ds,
\]
	where
	\[
	 \Lambda_R(f(\rho s, \cdot)) =\sup_{g\in \mathcal{F}_\ell (B_R)} \left\{ \int_{B_R} f(\rho s,x) g^2(x) dx -\frac 12 \int_{B_R} | \nabla g(x)|^2dx\right\},
	 \]
	 and $\mathcal{F}_\ell (B_R)$ is the set  of smooth functions on $B_R:=\{ x: |x| \le R\}$ with $\|g\|_{L^2(B_R)}=1$ and $g(\partial B_R)=\{0\}$.
For this result we need that for each $0\le s\le 1$, the function $f(\rho s, \cdot)$ is bounded and continuous and the family of  functions
 $\{s\rightarrow f(\rho s, x), x\in \RR^\ell\}$ is  equicontinuous in $[0,1]$. These properties are a consequence of assumption (\ref{h1}).
	In conclusion, we have proved that
	\begin{equation} \label{eq8}
	\liminf_{t\rightarrow \infty} I_{3,3}(t) \ge -npc \frac \rho {1-\rho} \frac {K^2}2+cnp \rho   \int_0^1 \Lambda_R(f(\rho s, \cdot))ds.
	\end{equation}
	From  (\ref{eq8}), (\ref{eq9}) and (\ref{eq11}),  letting 
	$K\downarrow  \lambda /\sqrt{c}$ and $R\uparrow \infty$, we obtain
	\begin{multline}   \label{eq12}  
		\liminf_{t\rightarrow \infty} (I_{3,2}(t)+ I_{3,3}(t) ) \ge  -\frac {np}{2(1-\rho)}  \lambda^2 \\ 
		 + nc \rho p
		\left(\int_0^1 \int_{\RR^\ell} f(s\rho, x) g^2(s,x) dxds - \frac 12 \int_0^1 \int_{\RR^\ell} |\nabla g(s,x)|^2 dxds \right),
		\end{multline}
	for any function $g(s,x)$ in $\mathcal{A}_\ell$, where $\mathcal{A}_\ell$ has been defined in (\ref{aell}).
	We can write
	\[
	\int_0^1 \int_{\RR^\ell} f(s\rho, x) g^2(s,x) dsdx
	=\frac{1}{p}\int_0^1 \int_0^1   \int_{\RR^{2\ell}}  \frac{ \widehat{\phi} (r,y) g^2(s,x) }{|\rho s -r|^{\alpha_0} } \gamma_{c\delta} (x-y)  dx dy dsdr.
	\]
	 Making the change of variables $s\rho \rightarrow s$, yields
	 	\[
	\int_0^1 \int_{\RR^\ell} f(s\rho, x) g^2(s,x) dsdx
=\frac 1{p\rho}\int_0^1 \int_0^\rho   \int_{\RR^{2\ell}}  \frac{\widehat{\phi} (r,y) g^2( s / \rho ,x)  } {|s -r|^{\alpha_0} }\gamma_{c\delta} (x-y) dx dy dsdr.
	\]
	 Now choose the function $\widehat{\phi}$ of the form $\widehat{\phi} (r,x)= g^2(\frac r\rho, x) {\bf 1}_{[0,\rho]} (r)$. With this choice we obtain
	 \begin{eqnarray*} 
	\int_0^1 \int_{\RR^\ell} f(s\rho, x) g^2(s,x) dsdx
&{\geq}&\frac {1}{p\rho}\int_0^\rho \int_0^\rho   \int_{\RR^{2\ell}}  \frac{ g^2 (r / \rho, y) g^2( s/ \rho ,x) }{|s -r|^{\alpha_0} } \gamma_{c\delta} (x-y)  dx dy dsdr \\
&=&\frac{1}{p}\rho ^{1-\alpha_0} \int_0^1 \int_0^1   \int_{\RR^{2\ell}}  \frac{ g^2 (r, y) g^2( s ,x) }{|s -r|^{\alpha_0} } \gamma_{c\delta} (x-y)  dx dy dsdr.
	 \end{eqnarray*}
	 \medskip
	 \noindent
	 {\it Step 3}. 	  With the above choice for $\widehat{\phi}$ and letting $p\rightarrow 1$, the term $I_{3,1}$ can be written as
		 \begin{eqnarray}  \nonumber 
		 I_{3,1}  &=&  -\frac n{2(n-1)}  c^{2-\frac \alpha 2}   \int_0^\rho \int_0^\rho \int_{\RR^{2\ell}} 
		   \frac{g^2 ( r/ \rho, y) g^2(s/  \rho ,x) } { |s-r|^{\alpha_0} }  \gamma_{c\delta} (z-x) dxdydrds\\ \label{eq13}
		   &=&-\frac {nc}{2}     \rho^{2-\alpha_0}    \int_{[0,1]^2}  \int_{\RR^{2\ell}} 
		   \frac{g^2 ( r, y) g^2(s ,x)} { |s-r|^{\alpha_0} }  \gamma_{c\delta} (z-x) dxdydrds.
	 \end{eqnarray}
	Finally, from (\ref{eq12}) and (\ref{eq13}), we obtain
	\begin{multline*}
	 	\lim_{p \downarrow 1}\liminf_{t\rightarrow \infty}  I_3(t)    \ge  -\frac n{2(1-\rho)}  \lambda^2 + \\
		nc \rho	\left( \frac{\rho^{1-\alpha_0}} 2\int_{[0,1]^2} \int_{\RR^{2\ell}} \frac{ g^2 (r, y) g^2( s ,x) }{|s -r|^{\alpha_0} }\gamma_{c\delta} (x-y)dxdydrds - \frac 1 2 \int_0^1 \int_{\RR^\ell} |\nabla g(s,x)|^2 dxds \right),
	\end{multline*}
Letting $\delta \downarrow 0$, we obtain
	\begin{multline*}
	 	\lim_{\delta\downarrow0,p \downarrow 1}\liminf_{t\rightarrow \infty}  I_3(t)    \ge  -\frac n{2(1-\rho)}  \lambda^2 + \\
		nc \rho	\left( \frac{\rho^{1-\alpha_0}} 2\int_{[0,1]^2} \int_{\RR^{2\ell}} \frac{ g^2 (r, y) g^2( s ,x) }{|s -r|^{\alpha_0} }\gamma (x-y)dxdydrds - \frac 1 2 \int_0^1 \int_{\RR^\ell} |\nabla g(s,x)|^2 dxds \right),
	\end{multline*}
Now we write $\widehat{g}(r,x)= \sqrt{\varkappa}  g(r,\varkappa x)$ {where $\varkappa$ is a constant whose value will be determined very soon, } and we obtain, using the scaling properties of $\gamma$,
\begin{eqnarray*}
&& \frac{\rho^{1-\alpha_0}} 2 \int_{[0,1]^2}\int_{\RR^{2\ell}}  \frac{ \widehat{ g}^2(r,y) \widehat{g}^2(s,x)}{|s-r|^{\alpha_0}} \gamma(x-y)dxdydrds-\frac 1 2 \int_0^1 \| \nabla \hat{g}(s,\cdot)\|^2_{L^2(\RR^\ell)}ds \\
&&\quad =\frac {\varkappa^\alpha  \rho^{1-\alpha_0} } 2 \int_{[0,1]^2}\int_{\RR^{2\ell}}  \frac{  g^2(r,y) g^2(s,x)}{|s-r|^{\alpha_0}} \gamma(x-y)dxdydrds-\frac {\varkappa^2} 2 \int_0^1 \| \nabla g(s,\cdot)\|^2_{L^2(\RR^\ell)}ds
\end{eqnarray*}	
 Finally, choosing $\varkappa =2^{ \frac 1{\alpha-2}} \rho ^{\frac {1-\alpha_0}{2-\alpha}}$ and taking  the supremum over $g$, we obtain 
 \[
 \liminf_{t\rightarrow \infty}  I_3(t)   \ge  -\frac n{2(1-\rho)}  \lambda^2 
		+ n  \rho^{a}     \left( \frac {n-1} 2 \right) ^{\frac 2 {2-\alpha}} \mathcal{E}(\alpha_0,\gamma).
		\]
Optimizing in $\rho$, this produces the lower bound
\[
\lambda_*(n) \ge  a^{\frac a2} (a+1) ^{-\frac {a+1}2}   \sqrt{2 \left( \frac {n-1} 2 \right) ^{\frac 2 {2-\alpha}} \mathcal{E}(\alpha_0,\gamma)}.
\]
The proof is now complete.
\end{proof}

\begin{remark}
	Putting together the results from Corollary \ref{cor2} and Theorem   \ref{t1} we obtain, for a nontrivial $u_0$ with compact support and assuming a covariance satisfying  conditions \ref{H2}, \ref{con:S} and the absolute continuity of $\mu$,
\[
  a^{\frac a2} (a+1) ^{-\frac {a+1}2}   \sqrt{2 \left( \frac {n-1} 2 \right) ^{\frac 2 {2-\alpha}} \mathcal{E}(\alpha_0,\gamma)}\le \lambda_*(n) \le
  \lambda^*(n) \le  \sqrt{2 \left( \frac {n-1} 2 \right) ^{\frac 2 {2-\alpha}} \mathcal{E}(\alpha_0,\gamma)}.
\]
Notice that when $\alpha_0 \uparrow 1$ the constant $a$ converges to  1 and the above factor converges to $\frac 12$.  In this sense, in comparison with (\ref{id:speed1}), our result is not optimal. We conjecture that the constant in the left-hand side should be $1$, but our techniques do not allow to show this.

\end{remark}

\bigskip
 \noindent
	\textbf{Acknowledgment:} 	J. Huang and K. L\^e were  supported by the NSF  Grant no. 0932078 000, while  they visited the Mathematical Sciences Research Institute in Berkeley, California in Fall 2015, during which the project was initiated.  D. Nualart was supported by the NSF grant no.  DMS1512891 and the ARO grant no. FED0070445. K. L\^e thanks PIMS for its support through the Postdoctoral Training Centre in Stochastics. The authors wish to thank the referees who pointed out a gap in the original proof of Theorem \ref{thm:expBB}, which leads to a new proof in the revised version. 

\begin{bibdiv}
\begin{biblist}
\bib{BC}{article}{
	author={Balan, Ralucca},
	author={Chen, Le},
	title={Parabolic Anderson Model with space-time homogeneous Gaussian noise and rough initial condition},
	journal={J. Theor. Probab.},
	date={to appear},
}
\bib{Chen}{article}{
	author={Chen, Xia},
	title={Moment asymptotics for parabolic Anderson equation with fractional time-space noise: in Skorohod regime},
   journal={Ann. Inst. Henri Poincar\'e Probab. Stat.},
   date={to appear},
}

\bib{Chen2}{article}{
   author={Chen, Xia},
   title={Exponential asymptotics and law of the iterated logarithm for
   intersection local times of random walks},
   journal={Ann. Probab.},
   volume={32},
   date={2004},
   number={4},
   pages={3248--3300},
   issn={0091-1798},
   review={\MR{2094445}},
}
\bib{CHSS}{article}{
	author={Chen, Xia},
	author={Hu, Yaozhong},
	author={Song, Jian},
	author={Song, Xiaoming},
	title={Temporal asymptotics for fractional parabolic Anderson model},
	journal={arXiv preprint},
}
\bib{ChDa}{article}{
   author={Chen, Le},
   author={Dalang, Robert C.},
   title={Moments and growth indices for the nonlinear stochastic heat
   equation with rough initial conditions},
   journal={Ann. Probab.},
   volume={43},
   date={2015},
   number={6},
   pages={3006--3051},
   issn={0091-1798},
   review={\MR{3433576}},
}
\bib{CHSX}{article}{
   author={Chen, Xia},
   author={Hu, Yaozhong},
   author={Song, Jian},
   author={Xing, Fei},
   title={Exponential asymptotics for time-space Hamiltonians},
   journal={Ann. Inst. Henri Poincar\'e Probab. Stat.},
   volume={51},
   date={2015},
   number={4},
   pages={1529--1561},
   issn={0246-0203},
   review={\MR{3414457}},
}

\bib{CoKh}{article}{
   author={Conus, Daniel},
   author={Khoshnevisan, Davar},
   title={On the existence and position of the farthest peaks of a family of
   stochastic heat and wave equations},
   journal={Probab. Theory Related Fields},
   volume={152},
   date={2012},
   number={3-4},
   pages={681--701},
   issn={0178-8051},
   review={\MR{2892959}},
}
\bib{DV}{article}{
   author={Donsker, M. D.},
   author={Varadhan, S. R. S.},
   title={Asymptotics for the polaron},
   journal={Comm. Pure Appl. Math.},
   volume={36},
   date={1983},
   number={4},
   pages={505--528},
   issn={0010-3640},
   review={\MR{709647}},
}
\bib{HHN}{article}{
   author={Hu, Yaozhong},
   author={Huang, Jingyu},
   author={Nualart, David},
   title={On the intermittency front of stochastic heat equation driven by
   colored noises},
   journal={Electron. Commun. Probab.},
   volume={21},
   date={2016},
   pages={Paper No. 21, 13},
   issn={1083-589X},
   review={\MR{3485390}},
}
\bib{HHNT}{article}{
   author={Hu, Yaozhong},
   author={Huang, Jingyu},
   author={Nualart, David},
   author={Tindel, Samy},
   title={Stochastic heat equations with general multiplicative Gaussian
   noises: H\"older continuity and intermittency},
   journal={Electron. J. Probab.},
   volume={20},
   date={2015},
   pages={no. 55, 50},
   issn={1083-6489},
   review={\MR{3354615}},
}
\bib{HLN}{article}{
	author={Huang, Jingyu},
	author={L\^e, Khoa},
	author={Nualart, David},
	title={Large time asymptotic for the parabolic Anderson model driven by spatially correlated noise},
	journal={Ann. Inst. Henri Poincar\'e Probab. Stat.},
	date={to appear},
}
\bib{HN}{article}{
   author={Hu, Yaozhong},
   author={Nualart, David},
   title={Stochastic heat equation driven by fractional noise and local
   time},
   journal={Probab. Theory Related Fields},
   volume={143},
   date={2009},
   number={1-2},
   pages={285--328},
   issn={0178-8051},
   review={\MR{2449130}},
}
\bib{HO}{article}{
   author={Hanche-Olsen, Harald},
   author={Holden, Helge},
   title={The Kolmogorov-Riesz compactness theorem},
   journal={Expo. Math.},
   volume={28},
   date={2010},
   number={4},
   pages={385--394},
   issn={0723-0869},
   review={\MR{2734454}},
}
\bib{Nakao}{article}{
   author={Nakao, Shintaro},
   title={On the spectral distribution of the Schr\"odinger operator with
   random potential},
   journal={Japan. J. Math. (N.S.)},
   volume={3},
   date={1977},
   number={1},
   pages={111--139},
   review={\MR{0651925}},
}
\bib{Nualart2}{book}{
   author={Nualart, David},
   title={The Malliavin calculus and related topics},
   series={Probability and its Applications (New York)},
   edition={2},
   publisher={Springer-Verlag, Berlin},
   date={2006},
   pages={xiv+382},
   isbn={978-3-540-28328-7},
   isbn={3-540-28328-5},
   review={\MR{2200233}},
}
\end{biblist}
\end{bibdiv}
\end{document}